\documentclass[journal]{IEEEtran}

\usepackage{amsmath,amssymb}
\usepackage{url}
\usepackage{caption}
\usepackage{multirow,booktabs}
\usepackage{mathtools}
\usepackage{hyperref}
\usepackage{color}
\usepackage{subfigure,balance}
\usepackage{cite}
\usepackage{graphicx}
\usepackage{epstopdf}
\usepackage[font=small,labelfont=bf]{caption}

\newcommand{\tr}{{{\mathsf T}}}

\newtheorem{definition}{Definition}
\newtheorem{theorem}{Theorem}

\newtheorem{proposition}{Proposition}
\newtheorem{remark}{Remark}
\newtheorem{lemma}{Lemma}
\newtheorem{example}{Example}

\newtheorem{corollary}{Corollary}

\newcommand{\DD}{\mbox{\it DD}}
\newcommand{\SOS}{\mbox{\it SOS}}
\newcommand{\SDD}{\mbox{\it SDD}}

\newcommand{\SDSOS}{\mbox{\it SDSOS}}

\newcommand{\cFW}{\mathcal{FW}}

\newcommand{\qed}{\hfill$\square$}

\newenvironment{proof}{~~\textit{Proof:}}{\hfill$\square$}

\hyphenation{op-tical net-works semi-conduc-tor}

\begin{document}
\title{Block Factor-width-two Matrices and Their Applications to Semidefinite and Sum-of-squares Optimization}

\author{Yang Zheng,~\IEEEmembership{Member,~IEEE,}
        Aivar~Sootla, 
        and~Antonis~Papachristodoulou,~\IEEEmembership{Fellow,~IEEE}
\thanks{The first two authors contributed equally to this work. A preliminary version of part of this work appeared in~\cite{sootla2019block}. This work is supported by the EPSRC Grant EP/M002454/1.}
\thanks{Y. Zheng is with the Department of Electrical and Computer Engineering, University of California San Diego, CA 92093. (email: zhengy@eng.ucsd.edu)}
\thanks{A. Sootla and A. Papachristodoulou are with Department of Engineering Science, University of Oxford, Parks Road, Oxford, OX1 3PJ, U.K. (emails: \{aivar.sootla, antonis\}@eng.ox.ac.uk)}}

\maketitle

\begin{abstract}

Semidefinite and sum-of-squares (SOS) optimization are fundamental computational tools in many areas, including linear and nonlinear systems theory. However, the scale of problems that can be addressed reliably and efficiently is still limited. In this paper, we introduce a new notion of \emph{block factor-width-two matrices} and build a new hierarchy of inner and outer approximations of the cone of positive semidefinite (PSD) matrices. This notion is a block extension of the standard factor-width-two matrices, and allows for an improved inner-approximation of the PSD cone. In the context of SOS optimization, this leads to a block extension of the \emph{scaled diagonally dominant sum-of-squares (SDSOS)} polynomials. By varying a matrix partition, the notion of {block factor-width-two matrices} can balance a trade-off between the computation scalability and solution quality for solving semidefinite and SOS optimization problems. Numerical experiments on a range of large-scale instances confirm our theoretical findings.
\end{abstract}

\begin{IEEEkeywords}
Semidefinite optimization,
			Sum-of-squares polynomials, Matrix decomposition,
			Large-scale systems.
\end{IEEEkeywords}

\IEEEpeerreviewmaketitle

\section{Introduction}

\IEEEPARstart{S}{emidefinite} programs (SDPs) are a class of convex problems over the cone of positive semidefinite (PSD) matrices~\cite{vandenberghe1996semidefinite}, which {is} {one of the major computational tools} in linear control theory. Many analysis and synthesis problems in linear systems can be addressed via solving certain SDPs; see~\cite{boyd1994linear} for an overview. The later development of sum-of-squares (SOS) optimization~\cite{blekherman2012semidefinite, bernard2009moments} extends the applications of SDPs to nonlinear problems involving polynomials, and thus, allows addressing many nonlinear control problems systematically, \emph{e.g.}, certifying asymptotic stability of equilibrium points of nonlinear
systems~\cite{papachristodoulou2002construction,anderson2015advances}, approximating region of attraction~\cite{henrion2013convex,topcu2008local,chesi2011domain}, and providing bounds on infinite-time averages~\cite{fantuzzi2016bounds}.

\subsection{Motivation}
In theory, 
SDPs can be solved up to any arbitrary precision in polynomial time using second-order interior-point methods (IPMs)~\cite{vandenberghe1996semidefinite}. From a practical viewpoint, however,
the {computational} speed and reliability of the current SDP solvers becomes worse for many large-scale problems of practical interest. Consequently, developing fast and reliable SDP solvers for large-scale problems has received considerable attention in the literature. For instance, a general purpose first-order solver based on the alternating direction method of multipliers (ADMM) was developed in~\cite{o2016conic}. For SDP programs with chordal sparsity (a sparsity pattern modeled by chordal graphs~\cite{blair1993introduction}), fast ADMM-based algorithms were proposed in~\cite{zheng2019chordal}, and efficient IPMs were suggested in~\cite{andersen2010implementation,fukuda2001exploiting}.  Chordal sparsity in the context of SOS optimization was also exploited in~\cite{waki2006sums, zheng2018decomposition,zheng2019sparse}. The underlying idea in these sparsity exploiting approaches is to equivalently decompose a large sparse PSD constraint into a number of smaller PSD constraints, leading to significant computational savings for sparse problems.

{Since} the approaches in~\cite{zheng2019chordal, fukuda2001exploiting,andersen2010implementation, waki2006sums, zheng2018decomposition,zheng2019sparse} are only suitable {for sufficiently sparse} problems, an alternative approach to speed-up semidefinite and SOS optimization was proposed in~\cite{ahmadi2019dsos} for general SDPs, where the authors suggested to approximate the PSD cone $\mathbb{S}^n_+$ with the cone of factor-width-two matrices~\cite{boman2005factor}, denoted as $\mathcal{FW}^n_2$ ($n$ is the matrix dimension). A matrix has a factor-width two if it can be represented as a sum of PSD matrices of rank at most two~\cite{boman2005factor}, and thus it is also PSD. The cone of $\mathcal{FW}^n_2$ can be equivalently written as a number of second-order cone constraints, and thus linear optimization over $\mathcal{FW}^n_2$ can be addressed by a second-order cone program (SOCP), which is much more scalable in terms of memory requirements and time consumption compared to SDPs. This feature of scalability is demonstrated in a wide range of applications~\cite{ahmadi2019dsos}. We note that $\mathcal{FW}^n_2$ is the same as the set of {symmetric} scaled diagonally dominant (SDD) matrices~\cite{boman2005factor}, and the authors in~\cite{ahmadi2019dsos} adopted the terminology \emph{SDD} instead of \emph{factor-width-two}.

As already pointed out in~\cite{ahmadi2019dsos}, approximating the PSD cone $\mathbb{S}^n_+$ by the cone of factor-width-two matrices $\mathcal{FW}^n_2$ is conservative. Consequently, the restricted problem may be infeasible or the optimal solution of the program with $\mathcal{FW}^n_2$ may be significantly different from that of the original SDP. There are several approaches to bridge the gap between $\mathcal{FW}^n_2$ and $\mathbb{S}^n_+$, such as the basis pursuit algorithm in~\cite{ahmadi2015sum}. As discussed in~\cite[Section 5]{ahmadi2019dsos}, one may also employ the notion of factor-width-$k$ matrices (denoted as $\mathcal{FW}^n_k$) that can be decomposed into a sum of PSD matrices of rank at most $k$. However, enforcing this constraint is problematic due to a large number of $k \times k$ PSD constraints, which grows in a combinatorial fashion as $n$ or $k$ increases (e.g., when $k =3$, the number of small PSD constraints is already $\mathcal{O}(n^3)$). Therefore, the computational burden may actually increase using factor-width-$k$ matrices compared to the original SDP, while also being conservative. It is nontrivial to use factor-width-$k$ matrices to approximate SDPs in a practical way. We note that the authors in~\cite{blekherman2020sparse} have provided quantification for the approximation quality of the PSD cone using the dual of $\mathcal{FW}^n_k$ for any factor-width $k$.

\subsection{Contributions}
In this paper, we take a different approach to enrich the cone of factor-width-two matrices for the approximation of the PSD cone: we take inspiration from SDD matrices and consider their block extensions. Our key idea is to partition a matrix into a set of non-intersecting blocks of entries and to enforce SDD constraints on these blocks instead of the individual entries.
In this way, we can reduce the number of small blocks significantly compared to $\mathcal{FW}^n_k$ with $k \geq 3$, while still improving the approximation quality compared to $\mathcal{FW}^n_2$.
Precisely, the contributions of this paper are:
\begin{itemize}
    \item We introduce a new class of \emph{block factor-width-two} matrices, which can be decomposed into a sum of PSD matrices whose rank is bounded by the corresponding block sizes. One notable feature of block factor-width-two matrices is that they are less conservative than $\mathcal{FW}^n_2$ and more scalable than $\mathcal{FW}^n_k (k \geq 3)$. This new class of matrices  forms a proper cone, and via coarsening the partition, we can build a new hierarchy of inner and outer approximations of the PSD cone.

\item Motivated by~\cite{blekherman2020sparse}, we provide lower and upper bounds on the distance between the class of block factor-width-two matrices and the PSD cone after some normalization. Our results explicitly show that reducing the number of partitions can improve the approximation quality. This agrees with the results in~\cite{blekherman2020sparse} that require to increase the factor-width $k$. We highlight that reducing the number of partitions is numerically more efficient in practice since the number of decomposition bases is reduced as well. In addition, we identify a class of sparse PSD matrices that belong to the cone of block factor-width-two matrices.

 \item  We apply the notion of block factor-width-two matrices in both semidefinite and SOS optimization. We first define a new \emph{block factor-width-two cone program}, which is able to return an upper bound to the corresponding SDP faster. Then, in the context of SOS optimization, applying the notion of block factor-width-two matrices naturally leads to a block extension of the so-called \emph{SDSOS} polynomials~\cite{ahmadi2019dsos}. A new hierarchy of inner approximations of SOS polynomials is derived accordingly. We also show that a natural partition exists in the context of SOS matrices. Numerical tests from large-scale SDPs and SOS optimization show promising results in balancing a trade-off between computation scalability and solution quality using our notion of  {block factor-width-two} matrices.
\end{itemize}

\subsection{Related work}

Developing efficient and reliable methods to make SDPs scalable is a very active research area. There are extensive results in the literature, and we here overview some representative techniques on improving scalability for SDPs (see~\cite{de2010exploiting,majumdar2020recent,vandenberghe2015chordal,zheng2021chordal} for excellent surveys). One main class of approaches is to exploit problem structure (such as sparsity~\cite{vandenberghe2015chordal,zheng2019chordal} and symmetry~\cite{gatermann2004symmetry}) to enhance scalability. Another class of methods aims to generate low-rank solutions to SDPs which promises to reduce computational time and storage requirements; see the celebrated Burer-Monteiro algorithm~\cite{burer2003nonlinear}. Also, there exists increasing research attention on developing efficient first-order algorithms for SDPs which in general trade off scalability with accuracy~\cite{o2016conic,yang2015sdpnal,monteiro2014first}. 
An iterative algorithm based on the Augmented Lagrangian method has been developed in~\cite{kovcvara2003pennon}. Finally, another category of approaches is to impose structural approximations of the PSD cone and trade off scalability with conservatism. One typical technique is the aforementioned factor-width-two approximation~\cite{ahmadi2019dsos,ahmadi2015sum}. Our result on block factor-width-two matrices falls into the last category and extends the technique in~\cite{ahmadi2019dsos}. It will be interesting to combine different approaches for further scalability improvements of solving SDPs.

\subsection{Paper structure and notation}
The rest of this paper is organized as follows. In Section~\ref{section:preliminaries}, we briefly review some necessary preliminaries on matrix theory. Section~\ref{section:blockFW2} introduces the new class of block factor-width-two matrices and a new hierarchy of inner/outer approximations of the PSD cone. The approximation quality of block factor-width-two matrices is discussed in Section~\ref{section:approximation}. We present applications in semidefinite and SOS optimization in Section~\ref{section:Application}, and numerical experiments are reported in Section~\ref{section:Experiments}. We conclude the paper in Section~\ref{section:Conclusion}.

\emph{Notation:} Throughout this paper, we use $\mathbb{N} = \{1, 2, \ldots\}$ to denote the set of positive integers, and $\mathbb{R}$ to denote the set of real numbers. Given a matrix $A \in \mathbb{R}^{n\times n}$, we denote its transpose by $A^\tr$. We write $\mathbb{S}^n$ for the set of $n \times n$ symmetric matrices, and the set of $n\times n$ positive semidefinite (PSD) matrices is denoted as $\mathbb{S}^n_+$. When the dimensions are clear from the context, we also use $X \succeq 0$ to denote a PSD matrix. We use $I_{k}$ to denote an identity matrix of size $k \times k$, and $0$ to denote a zero block with appropriate dimensions that shall be clear from the context. A block-diagonal matrix with $D_1, \ldots, D_p$ on its diagonal entries is denoted as $\text{diag}(D_1, \ldots, D_p)$.

\section{Preliminaries} \label{section:preliminaries}

In this section, we present some preliminaries on matrix theory, including block-partitioned matrices, factor-width-$k$ matrices, and sparse PSD matrices.

\subsection{Block-partitioned matrices and two linear maps}

Given a matrix $A \in \mathbb{R}^{n \times n}$, we say a set of integers $\alpha = \{k_1, k_2, \ldots, k_p\}$ with $k_i \in \mathbb{N}\; (i = 1, \ldots, p)$  is a partition of matrix $A$ if $\sum_{i=1}^p k_i= n$, and $A$ is partitioned as
$$
    \begin{bmatrix}
        A_{11} & A_{12} & \ldots A_{1p} \\
        A_{21} & A_{22} & \ldots A_{2p} \\
        \vdots & \vdots & \ddots \vdots  \\
        A_{p1} & A_{p2} & \ldots A_{pp}
    \end{bmatrix},
$$
with $A_{ij} \in \mathbb{R}^{k_i \times k_j}, \forall i, j = 1, \ldots, p$. Throughout this paper, we assume the number of blocks in a partition is no less than two, i.e., $p \geq 2$. Obviously, a matrix $A \in \mathbb{R}^{n \times n}$ admits many partitions, and one trivial partition is $\alpha = \{1,1,\ldots,1\}$. We say $\alpha= \{k_1, \ldots, k_p\}$ is \emph{homogeneous}, if we have $k_i = k_j, \forall\,i, j = 1, \ldots, p$, where the matrix dimension satisfies $n = pk_1$. Next, we define a coarser/finer relation between two partitions $\alpha$ and $\beta$ for matrices in $\mathbb{R}^{n \times n}$.

\begin{definition}
    Given two partitions $\alpha = \{k_1, k_2, \ldots, k_p\}$ and $\beta = \{l_1, l_2, \ldots, l_q\}$ with $ p < q $ and $\sum_{i=1}^p k_i = \sum_{i=1}^q l_i$, we say $\beta$ is a sub-partition of $\alpha$, denoted as $\beta 	\sqsubseteq \alpha$, if there exist integers $\{m_1, m_2, \ldots, m_{p+1}\}$ with $m_1 = 1, m_{p+1} = q+1, m_i < m_{i+1}, i = 1, \ldots, p$ such that
    $$
        k_i = \sum_{j = m_i}^{m_{i+1} - 1} l_j, \; \forall i = 1, \ldots, p.
    $$
\end{definition}

Essentially, a subpartition of $\alpha =\{k_1, k_2, \ldots, k_p\}$ is a finer partition that breaks some blocks of $\alpha$ into smaller blocks. For example, given three partitions $\alpha = \{4, 2\}, \beta = \{2, 2, 2\}$ and $\gamma = \{1, 1, 1, 1, 1, 1\}$, we have
$
    \gamma \sqsubseteq \beta \sqsubseteq \alpha.
$
Fig.~\ref{FIG:1} illustrates these three partitions for a matrix in $\mathbb{R}^{6 \times 6}$. Given a partition $\alpha = \{k_1, \ldots, k_p\}$ with $\sum_{i=1}^p k_i = n$, we denote
$$
    E_{i}^{\alpha} = \begin{bmatrix} 0 &\ldots & I_{k_i} & \ldots & 0 \end{bmatrix} \in \mathbb{R}^{k_i \times n},
$$
which forms a partition of the identity matrix of size $n \times n$,
\begin{equation} \label{eq:indentityAlpha}
    I_n = \begin{bmatrix} I_{k_1} & & & \\
        & I_{k_2}  & & \\
        &  & \ddots & \\
        & & & I_{k_p} \\
                \end{bmatrix} = \begin{bmatrix} E_{1}^{\alpha} \\ E_{2}^{\alpha} \\
                \vdots\\ E_{p}^{\alpha}\\
                \end{bmatrix}.
\end{equation}
We also denote
\begin{equation} \label{eq:blockbasis}
        E^{\alpha}_{ij} = \begin{bmatrix} (E^{\alpha}_i)^\tr & (E^{\alpha}_j)^\tr \end{bmatrix}^\tr \in \mathbb{R}^{(k_i + k_j) \times n}, i \neq j.
\end{equation}
More generally, for a set of distinct indices $\mathcal{C} = \{i_1, \ldots, i_m\}$ and $1 \leq i_1 < \ldots < i_m \leq p$, we define
$$
    E^{\alpha}_{\mathcal{C}} = \begin{bmatrix} (E_{i_1}^{\alpha})^\tr & (E_{i_2}^{\alpha})^\tr & \ldots & (E_{i_m}^{\alpha})^\tr\end{bmatrix}^\tr \in \mathbb{R}^{|\mathcal{C}|\times n},
$$
where $|\mathcal{C}| = \sum_{i \in \mathcal{C}} k_{i} $. For a trivial partition $\alpha = \{1, 1, \ldots, 1\}$, notations $E_{i}^{\alpha}, E_{ij}^{\alpha}, E_{\mathcal{C}}^{\alpha}$ are simplified as $E_{i}, E_{ij}, E_{\mathcal{C}}$, respectively. Note that $E_i$ is the $i$-th standard unit vector in $\mathbb{R}^n$.

\begin{figure}
	\centering
	\subfigure[]{
		\includegraphics[scale=.5]{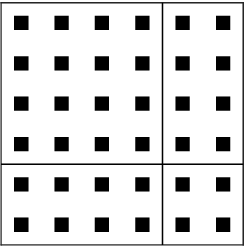}
	}
	\hspace{2mm}
	\subfigure[]{
		\includegraphics[scale=.5]{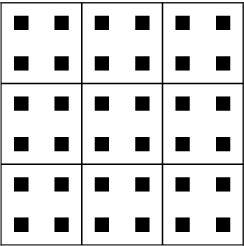}
	}
	\hspace{2mm}
	\subfigure[]{
		\includegraphics[scale=.5]{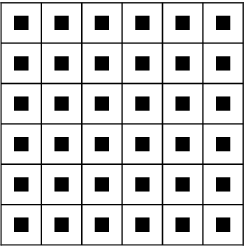}
	}
	\caption{Different partitions for a $6 \times 6$ matrix: (a) $\alpha=\{4, 2\}$, (b) $\beta = \{2, 2, 2\}$, (c) $\gamma = \{1, 1, 1, 1, 1, 1\}$. Here, each black square represents a real number. From right to left, we get coarser partitions, \emph{i.e.} $ \gamma \sqsubseteq \beta \sqsubseteq \alpha$.}
	\label{FIG:1}
\end{figure}

For a block matrix $A$ with partition $\alpha = \{k_1, \ldots, k_p\}$, the matrix $E_{\mathcal{C}}^{\alpha}$ with set $\mathcal{C} = \{i_1, \ldots,i_m\}$ can be used to define two linear maps:
\begin{itemize}
    \item  1) \emph{Truncation operator}, which selects a principle submatrix from $A$, \emph{i.e.},
$$
    Y := E_{\mathcal{C}}^{\alpha} A (E_{\mathcal{C}}^{\alpha})^\tr = \begin{bmatrix} A_{i_1i_1} &  \ldots  &A_{i_1i_m} \\
     A_{i_2i_1} & \ldots  &A_{i_2i_m} \\
      \vdots & \ldots  &\vdots  \\
      A_{i_mi_1} & \ldots  &A_{i_mi_m} \\\end{bmatrix} \in \mathbb{R}^{|\mathcal{C}| \times|\mathcal{C}|}.
$$
\item  2) \emph{Lift operator}, which creates an $n \times n$ matrix from a
matrix of dimension $|\mathcal{C}| \times |\mathcal{C}|$,  \emph{i.e.},
$
    (E_{\mathcal{C}}^{\alpha})^\tr Y E_{\mathcal{C}}^{\alpha}\in \mathbb{R}^{n \times n},
$ for a given matrix $Y \in \mathbb{R}^{|\mathcal{C}| \times|\mathcal{C}|}$.
\end{itemize}
Finally, we define a block permutation matrix with respect to a partition $\alpha$: consider an $n \times n$ identity matrix partitioned as~\eqref{eq:indentityAlpha}. A block $\alpha$-permutation matrix $P_{\alpha}$ is a matrix obtained by permuting the block-wise rows of $I_n$ in~\eqref{eq:indentityAlpha} according to some permutation of the numbers 1 to $p$. For instance, if $\alpha = \{k_1, k_2\}$, then $P_{\alpha}$ is in one of the following forms
$$
    \begin{bmatrix}
        I_{k_1} & \\
        &  I_{k_2}
    \end{bmatrix}, \quad \begin{bmatrix}
         &  I_{k_2}\\
        I_{k_1}&
    \end{bmatrix}.
 $$

\subsection{Factor-width-$k$ matrices}

We now introduce the concept of \emph{factor-width-$k$ matrices}, originally defined in~\cite{boman2005factor}.

\begin{definition}
    The factor width of a PSD matrix $X$ is the smallest integer $k$ such that there exists a matrix $V$ where $A = VV^\tr$ and each column of $V$ has at most $k$ non-zeros.
    \end{definition}

Equivalently, the factor-width of $X$ is the smallest integer $k$ for which $X$ can be written as the sum of PSD matrices that are non-zero only on a single $k \times k$ principal submatrix. We use $\mathcal{FW}_k^n$ to denote the set of $n \times  n$ matrices of factor-width no greater than $k$. Then, we have the following inner approximations of $\mathbb{S}^n_+$,
\begin{equation} \label{eq:fwkinner}
        \mathcal{FW}_1^n \subseteq \mathcal{FW}_2^n \subseteq \ldots \subseteq \mathcal{FW}_n^n = \mathbb{S}^n_+.
\end{equation}
It is not difficult to see that
$
    Z \in \mathcal{FW}^n_k
$ if and only if there exist $Z_i \in \mathbb{S}^k_+$ such that
\begin{equation}\label{eq:fwk}
    Z = \sum_{i=1}^s E_{\mathcal{C}_i}^\tr Z_i E_{\mathcal{C}_i},
\end{equation}
where $\mathcal{C}_i$ is a set of $k$ distinct integers from 1 to $n$ and $s = { n \choose k}$. We say~\eqref{eq:fwk} is a factor-wdith-$k$ decomposition of $Z$.

The dual of $\mathcal{FW}^n_k$ with respect to the trace inner product is
$$
    (\mathcal{FW}^n_k)^* = \left\{X \in \mathbb{S}^n \mid E_{\mathcal{C}_i}XE_{\mathcal{C}_i}^\tr \in \mathbb{S}^k_+, \forall i = 1, \ldots, s\right\}.
$$
Then, we also have a hierarchy of outer approximations of the PSD cone $\mathbb{S}^n_+$
$$
    \mathbb{S}^n_+ = (\mathcal{FW}_n^n)^* \subseteq \ldots \subseteq  (\mathcal{FW}_2^n)^* \subseteq (\mathcal{FW}_1^n)^*.
$$

Particularly, an interesting case is $\mathcal{FW}_2^n$, which is the same as the set of symmetric scaled diagonally dominant matrices~\cite{boman2005factor}. Linear optimization over $\mathcal{FW}_2^n$ can be equivalently converted into an SOCP, for which efficient algorithms exist. This feature of scalability is the main motivation of the so-called SDSOS optimization in~\cite{ahmadi2019dsos} that utilizes $\mathcal{FW}_2^n$. For completeness, the definition of scaled diagonally dominant matrices is given as follows.
\begin{definition}
    A symmetric matrix $A \in \mathbb{S}^n$ with entries $a_{ij}$ is diagonally dominant (DD) if
    $$
        a_{ii} \geq \sum_{j\neq i} |a_{ij}|, \forall i = 1, \ldots, n.
    $$
    A  symmetric matrix $A \in \mathbb{S}^n$  is scaled diagonally dominant (SDD) if there exists a diagonal matrix $D$ with positive diagonal entries such that $DAD$ is diagonally dominant.
\end{definition}

We denote the set of $n \times n$ DD and SDD matrices as $\DD_n$ and $\SDD_n$, respectively. It is not difficult to see that
$$
    \DD_n \subseteq \SDD_n \subseteq \mathbb{S}^n_+.
$$
Also, it is proved in~\cite{boman2005factor} that $SDD_n  = \mathcal{FW}^n_2$.

\subsection{Sparse PSD matrices}

This section covers some notation 
on sparse PSD matrices. Here, we use an undirected graph to describe the sparsity pattern of a symmetric matrix $X \in \mathbb{S}^n$ with partition $\alpha = \{k_1, k_2, \ldots, k_p\}$.
A graph $\mathcal{G}(\mathcal{V}, \mathcal{E})$ is defined by a set of vertices $\mathcal{V} = \{1, 2, \ldots , p\}$ and a set of
edges $\mathcal{E} \subseteq \mathcal{V}\times \mathcal{V}$. Here, we only consider graphs with no self-loops, i.e., $(i,i) \notin \mathcal{E}$. A graph $\mathcal{G}(\mathcal{V}, \mathcal{E})$ is undirected if $(i,j) \in \mathcal{E} \Rightarrow (j,i) \in \mathcal{E}$.

Given a partition $\alpha = \{k_1, k_2, \ldots, k_p\}$, we define a set of sparse block matrices defined by a graph $\mathcal{G}(\mathcal{V}, \mathcal{E})$ as
$$
    \mathbb{S}^n_{\alpha}(\mathcal{E},0) = \{X \in \mathbb{S}^n \mid X_{ij} = 0, \; \text{if}\; (i,j) \notin \mathcal{E}, i \neq j\},
$$
where $X_{ij} \in \mathbb{R}^{k_i \times k_j}$. The set of sparse block PSD matrices is defined as
$$
     \mathbb{S}^n_{\alpha,+}(\mathcal{E},0) = \mathbb{S}^n_{\alpha}(\mathcal{E},0)  \cap \mathbb{S}^n_{+},
$$
and the set of PSD completable matrices is defined as
$$
     \mathbb{S}^n_{\alpha,+}(\mathcal{E},?) = \mathbb{P}_{\mathbb{S}^n_{\alpha}(\mathcal{E},0)}\left(\mathbb{S}^n_{+}\right),
$$
where $\mathbb{P}_{\mathbb{S}^n_{\alpha}(\mathcal{E},0)}\left(\cdot \right)$ denotes a projection onto  the space of $\mathbb{S}^n_{\alpha}(\mathcal{E},0)$ with respect to the usual Frobenius matrix norm, i.e., it replaces the blocks outside $\mathcal{E}$ with zeros. For any undirected graph $\mathcal{G}(\mathcal{V}, \mathcal{E})$, the cones $ \mathbb{S}^n_{\alpha,+}(\mathcal{E},0)$ and $\mathbb{S}^n_{\alpha,+}(\mathcal{E},?)$  are dual to each other with respect to the trace inner product. For a trivial partition $\alpha = \{1, 1, \ldots, 1\}$, notations $ \mathbb{S}^n_{\alpha}(\mathcal{E},0)$,  $\mathbb{S}^n_{\alpha,+}(\mathcal{E},0)$, $\mathbb{S}^n_{\alpha,+}(\mathcal{E},?) $ are simplified as $ \mathbb{S}^n(\mathcal{E},0)$,  $\mathbb{S}^n_{+}(\mathcal{E},0)$,  $\mathbb{S}^n_{+}(\mathcal{E},?) $, respectively.

One important feature of $\mathbb{S}^n_{\alpha,+}(\mathcal{E},0)$  and $\mathbb{S}^n_{\alpha,+}(\mathcal{E},?) $ is that they allow an equivalent decomposition when the graph $\mathcal{G}(\mathcal{V}, \mathcal{E})$ is chordal. Recall that an undirected graph is called chordal if every cycle of length greater than three has at least one chord~\cite{blair1993introduction}. A chord is an edge that connects two non-consecutive nodes in a cycle (see~\cite{vandenberghe2015chordal} for details). Before introducing the decomposition of $\mathbb{S}^n_{\alpha,+}(\mathcal{E},0)$  and $\mathbb{S}^n_{\alpha,+}(\mathcal{E},?)$, we need to define another concept of cliques: a clique $\mathcal{C}$ is a subset of vertices where $(i,j) \in \mathcal{E}, \forall i, j \in \mathcal{C}$, and it is called a maximal clique if it is not contained in another clique.

\begin{theorem}[{\!\!\cite{agler1988positive,grone1984positive,zheng2019a}}] \label{prop:chordal}
    Given a chordal graph $\mathcal{G}(\mathcal{V},\mathcal{E})$ with maximal cliques $\mathcal{C}_1, \ldots, \mathcal{C}_g$ and a partition $\alpha = \{k_1, k_2, \ldots, k_p\}$, we have
    \begin{itemize}
        \item $X \in \mathbb{S}^n_{\alpha,+}(\mathcal{E},?)$ if and only if
        $
            E^{\alpha}_{\mathcal{C}_i}X(E^{\alpha}_{\mathcal{C}_i})^\tr \in \mathbb{S}^{|\mathcal{C}_i|}_+, i = 1, \ldots, g.
        $
        \item $Z \in \mathbb{S}^n_{\alpha,+}(\mathcal{E},0)$ if and only if there exist a set of matrices $Z_i \in \mathbb{S}^{|\mathcal{C}_i|}_+, i = 1, \ldots, g$, such that
        \begin{equation} \label{eq:chordal}
            Z = \sum_{i=1}^g (E^{\alpha}_{\mathcal{C}_i})^\tr Z_i (E^{\alpha}_{\mathcal{C}_i}).
        \end{equation}
    \end{itemize}
\end{theorem}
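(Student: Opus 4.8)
The plan is to reduce both claims to their scalar counterparts (the trivial partition $\alpha=\{1,\ldots,1\}$), which are precisely the classical chordal decomposition results cited in the statement. The bridge is a graph \emph{expansion}. To the block graph $\mathcal{G}(\mathcal{V},\mathcal{E})$ and the partition $\alpha=\{k_1,\ldots,k_p\}$ I associate an expanded scalar graph $\hat{\mathcal{G}}(\hat{\mathcal{V}},\hat{\mathcal{E}})$ on $n=\sum_i k_i$ vertices, obtained by replacing each block vertex $i$ by a cluster $B_i$ of $k_i$ scalar vertices that is internally complete, and by joining every vertex of $B_i$ to every vertex of $B_j$ whenever $(i,j)\in\mathcal{E}$. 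By construction the two sparsity models coincide, $\mathbb{S}^n_{\alpha}(\mathcal{E},0)=\mathbb{S}^n(\hat{\mathcal{E}},0)$, since a block $X_{ij}$ is unconstrained exactly when $B_i$ and $B_j$ are fully joined; moreover, for any index set $\mathcal{C}$ the block operator $E^{\alpha}_{\mathcal{C}}$ is literally the scalar operator $E_{\hat{\mathcal{C}}}$ for $\hat{\mathcal{C}}=\bigcup_{j\in\mathcal{C}}B_j$, and $|\hat{\mathcal{C}}|=\sum_{j\in\mathcal{C}}k_j=|\mathcal{C}|$.

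The first substantive step is to show that $\hat{\mathcal{G}}$ inherits chordality from $\mathcal{G}$. I would prove this through a perfect elimination ordering (PEO): take a PEO $\sigma$ of $\mathcal{G}$ and lift it to an ordering of $\hat{\mathcal{V}}$ in which the clusters are processed in the order dictated by $\sigma$, the scalar vertices inside each cluster being ordered arbitrarily. When a scalar vertex $v\in B_i$ is eliminated, its not-yet-eliminated neighbors are the remaining vertices of $B_i$ together with all vertices of the clusters $B_j$ for which $j$ is a later neighbor of $i$ under $\sigma$. Since $\sigma$ is a PEO, these later neighbors of $i$ form a clique in $\mathcal{G}$, so the corresponding clusters are pairwise fully joined in $\hat{\mathcal{G}}$ and each cluster is internally complete; hence this neighborhood is a clique. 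Therefore the lifted ordering is a PEO of $\hat{\mathcal{G}}$, proving chordality.

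The second step is to identify the maximal cliques of $\hat{\mathcal{G}}$. A set of scalar vertices is a clique only if the set $S\subseteq\mathcal{V}$ of clusters it meets is a clique in $\mathcal{G}$; and since each cluster is internally complete and adjacent clusters are fully joined, any clique of $\hat{\mathcal{G}}$ can be enlarged to contain every vertex of each cluster it meets. Consequently the maximal cliques of $\hat{\mathcal{G}}$ are exactly the blow-ups $\hat{\mathcal{C}}_i=\bigcup_{j\in\mathcal{C}_i}B_j$ of the maximal cliques $\mathcal{C}_1,\ldots,\mathcal{C}_g$ of $\mathcal{G}$. Combined with $E^{\alpha}_{\mathcal{C}_i}=E_{\hat{\mathcal{C}}_i}$ and $|\hat{\mathcal{C}}_i|=|\mathcal{C}_i|$ from the first step, this aligns every object in the block statement with its scalar analogue on $\hat{\mathcal{G}}$.

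It then remains to invoke the classical scalar chordal theorems on $\hat{\mathcal{G}}$: the completion result gives $X\in\mathbb{S}^n_{+}(\hat{\mathcal{E}},?)$ iff $E_{\hat{\mathcal{C}}_i}X E_{\hat{\mathcal{C}}_i}^\tr\in\mathbb{S}^{|\mathcal{C}_i|}_+$ for all $i$, and its dual gives $Z=\sum_i E_{\hat{\mathcal{C}}_i}^\tr Z_i E_{\hat{\mathcal{C}}_i}$ with $Z_i\in\mathbb{S}^{|\mathcal{C}_i|}_+$ for $Z\in\mathbb{S}^n_{+}(\hat{\mathcal{E}},0)$. Since $\mathbb{S}^n_{+}(\hat{\mathcal{E}},?)=\mathbb{S}^n_{\alpha,+}(\mathcal{E},?)$ and $\mathbb{S}^n_{+}(\hat{\mathcal{E}},0)=\mathbb{S}^n_{\alpha,+}(\mathcal{E},0)$, rewriting these with $E_{\hat{\mathcal{C}}_i}=E^{\alpha}_{\mathcal{C}_i}$ yields both claims. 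I expect the main obstacle to be precisely the chordality-preservation and maximal-clique steps: one must verify carefully that blowing up each vertex into an internally complete cluster neither destroys chordality nor creates maximal cliques other than the cluster blow-ups of the original maximal cliques. The PEO argument is what makes both facts clean and avoids an ad hoc cycle-chasing argument.
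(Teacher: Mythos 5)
Your proposal is correct, but a direct comparison with ``the paper's own proof'' is not quite possible: the paper does not prove Theorem~\ref{prop:chordal} at all. It invokes the scalar results of \cite{agler1988positive,grone1984positive} for the trivial partition and defers the extension to arbitrary partitions to the cited reference (Chapter 2.4 of \cite{zheng2019a}). What you have supplied is a complete, self-contained argument for exactly that extension, and it follows the standard route used in the chordal-sparsity literature: blow up each block vertex $i$ into an internally complete cluster $B_i$ of $k_i$ scalar vertices, fully join $B_i$ and $B_j$ whenever $(i,j)\in\mathcal{E}$, observe that $\mathbb{S}^n_{\alpha}(\mathcal{E},0)=\mathbb{S}^n(\hat{\mathcal{E}},0)$, and reduce to the scalar theorems on the expanded graph. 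Your two substantive steps are both sound: the lifted ordering is indeed a perfect elimination ordering (when $v\in B_i$ is eliminated, its later neighbors are the remaining vertices of $B_i$ plus the full clusters of the later $\sigma$-neighbors of $i$, and these are pairwise fully joined precisely because those neighbors form a clique in $\mathcal{G}$), and the maximal cliques of $\hat{\mathcal{G}}$ are exactly the blow-ups $\hat{\mathcal{C}}_i=\bigcup_{j\in\mathcal{C}_i}B_j$, since any clique of $\hat{\mathcal{G}}$ can be saturated within the clusters it meets and distinct clusters it meets must be adjacent in $\mathcal{G}$.

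Three points are used implicitly and would be worth one line each in a polished write-up, though none is a gap. First, the identity $E^{\alpha}_{\mathcal{C}}=E_{\hat{\mathcal{C}}}$ holds because the scalar indices of each cluster $B_i$ are the consecutive indices determined by the partition $\alpha$; this is a statement about labeling and is independent of the elimination ordering (chordality is label-invariant, so reordering within clusters in the PEO causes no conflict). Second, for the first bullet you need $\mathbb{S}^n_{\alpha,+}(\mathcal{E},?)=\mathbb{S}^n_{+}(\hat{\mathcal{E}},?)$, which follows immediately since both are defined as the projection of $\mathbb{S}^n_+$ onto the same subspace once the sparsity subspaces coincide. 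Third, you rely on the Fulkerson--Gross equivalence between chordality and the existence of a perfect elimination ordering, and on $k_i\geq 1$ so that clusters are nonempty; both are standard and harmless here. What your approach buys, relative to the paper's proof-by-citation, is a transparent reduction showing that the block theorem is not genuinely new mathematics but the scalar theorem applied to a blown-up pattern, with the clique-weighted cardinality $|\mathcal{C}_i|=\sum_{j\in\mathcal{C}_i}k_j$ emerging automatically from the expansion.
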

For the trivial partition $\alpha = \{1, 1, \ldots, 1\}$, Theorem~\ref{prop:chordal} was originally proved in~\cite{agler1988positive,grone1984positive}. The extension to an arbitrary partition  $\alpha = \{k_1, k_2, \ldots, k_p\}$ was given in~\cite[Chapter 2.4]{zheng2019a}. We note that this decomposition underpins many recent algorithms on exploiting sparsity in semidefinite programs; see~\emph{e.g.},~\cite{vandenberghe2015chordal,zheng2019chordal}.

\begin{remark}[Factor-width decomposition and sparse chordal decomposition]
It is clear that the factor-width decomposition~\eqref{eq:fwk} and the sparse chordal decomposition~\eqref{eq:chordal} are in the same decomposition form but with two distinctive differences: 1) the number of components is a combinatorial number ${n \choose k}$ in~\eqref{eq:fwk}, while the number is bounded by the number of maximal cliques in~\eqref{eq:chordal}; 2) the size of each component in~\eqref{eq:fwk} is fixed as the factor-width $k$ while the size is determined by the corresponding maximal clique in~\eqref{eq:chordal}. Note that $\mathcal{FW}^n_k$ is an inner approximation of $\mathbb{S}^n_{+}$ while the decomposition~\eqref{eq:chordal} is necessary and sufficient for the cone $\mathbb{S}^n_{\alpha,+}(\mathcal{E},0)$ with a chordal sparsity pattern $\mathcal{E}$. \hfill $\square$
\end{remark}

\section{Block factor-width-two matrices} \label{section:blockFW2}

Since there are a combinatorial number ${ n \choose k}$ of  smaller 
matrices of size $k \times k$, a complete parameterization of $\mathcal{FW}^n_k$ is not always practical using~\eqref{eq:fwk}. In other words, even though $\mathcal{FW}^n_k$ is an inner approximation of $\mathbb{S}^n_+$, it does not necessarily mean {that} checking the membership of  $\mathcal{FW}^n_k$ is always computationally cheaper than that of $\mathbb{S}^n_+$. For instance, optimizing over $\mathcal{FW}^n_3$ requires $\mathcal{O}(n^3)$ PSD constraints of size $3 \times 3$, which is prohibitive for even moderate $n$. It appears that the only practical case is $\mathcal{FW}^n_2$ which is the same as $SDD_n$, where we have
$$
    Z \in \mathcal{FW}^n_2 \Leftrightarrow Z = \sum_{i=1}^{n-1} \sum_{j = i+1}^n E_{ij}^\tr Z_{ij} E_{ij} \; \text{with}\; Z_{ij} \in \mathbb{S}^2_{+}.
$$
This constraint $Z \in \mathcal{FW}^n_2$ can be further reformulated into $\mathcal{O}(n^2)$ second-order cone constraints, for which efficient solvers are available. However, the gap between  $\mathcal{FW}^n_2$ and $\mathbb{S}^n_+$ might be unacceptable in some applications.

To bridge this gap, we introduce a new class of block factor-width-two matrices in this section. We show that this class of matrices is less conservative than $\mathcal{FW}^n_2$ and more scalable than $\mathcal{FW}^n_3 (k \geq 3)$ for the inner approximation of $\mathbb{S}^n_+$.

\subsection{Definition and a new hierarchy of inner/outer approximations of the PSD cone}

The class of block factor-width-two matrices is defined as follows.

\begin{definition}
    A symmetric matrix $Z \in \mathbb{S}^n$ with partition $\alpha = \{k_1, k_2, \ldots, k_p\}$ belongs to the class of block factor-width-two matrices, denoted as $\mathcal{FW}_{\alpha,2}^n$, if and only if 
    \begin{equation} \label{eq:BlkFW}
        Z = \sum_{i=1}^{p-1}\sum_{j=i+1}^p (E^{\alpha}_{ij})^\tr X_{ij} E^{\alpha}_{ij}
    \end{equation}
    for some $X_{ij} \in \mathbb{S}^{k_i+k_j}_+$ and with $E^{\alpha}_{ij}$ defined in~\eqref{eq:blockbasis}.
\end{definition}

\begin{figure}
	\centering
		\includegraphics[scale=.75]{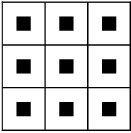}
		\raisebox{7.5mm}{$=$}
		\includegraphics[scale=.75]{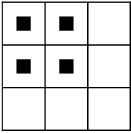}
		\raisebox{7.5mm}{$+$}
		\includegraphics[scale=.75]{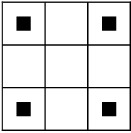}
		\raisebox{7.5mm}{$+$}
		\includegraphics[scale=.75]{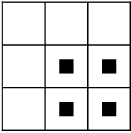}
	
	\caption{Block factor-width-two decomposition~\eqref{eq:BlkFW} for a PSD matrix with partition $\alpha =\{k_1, k_2, k_3\}$, where each summand is required to be PSD. The $(i,j)$ black square represents a submatrix of dimension $k_i \times k_j, i, j =1, 2, 3$.}
	\label{FIG:2}
\end{figure}

Fig.~\ref{FIG:2} demonstrates this definition for a PSD matrix with partition $\alpha = \{k_1, k_2, k_3\}$. This set of matrices has strong topological properties with an easy characterization of its dual cone, as shown below.

\begin{proposition}\label{prop:set-properties}
For any admissible $\alpha$, the dual of $\mathcal{FW}_{\alpha,2}^n$ with respect to the trace inner product is
$$
    (\mathcal{FW}_{\alpha,2}^n)^* = \{X \in \mathbb{S}^n \mid E^{\alpha}_{ij} X (E^{\alpha}_{ij})^\tr \succeq 0, 1\leq i < j \leq p\}.
$$
Furthermore, both $\mathcal{FW}_{\alpha,2}^n$ and $(\mathcal{FW}_{\alpha,2}^n)^*$ are proper cones, \emph{i.e.}, they are convex, closed,  solid, and  pointed cones.
\end{proposition}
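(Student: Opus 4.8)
The plan is to treat $\mathcal{FW}_{\alpha,2}^n$ as the image of a product of PSD cones under the block-lift map and to read off both the dual and all four cone properties from this structure. Writing $\Phi\big((X_{ij})_{i<j}\big) = \sum_{i<j} (E^{\alpha}_{ij})^\tr X_{ij} E^{\alpha}_{ij}$, we have $\mathcal{FW}_{\alpha,2}^n = \Phi\big(\prod_{i<j}\mathbb{S}^{k_i+k_j}_+\big)$, i.e. a Minkowski sum of linear images of self-dual PSD cones. For the dual I would invoke only two textbook rules valid for arbitrary cones, namely $(K_1+K_2)^*=K_1^*\cap K_2^*$ and $(L(C))^*=(L^*)^{-1}(C^*)$. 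The single computation needed is the adjoint of the lift operator: by cyclicity of the trace, $\langle Y,(E^{\alpha}_{ij})^\tr X_{ij} E^{\alpha}_{ij}\rangle=\langle E^{\alpha}_{ij}Y(E^{\alpha}_{ij})^\tr,X_{ij}\rangle$, so the adjoint of the lift is exactly the truncation operator. Combining this with self-duality of $\mathbb{S}^{k_i+k_j}_+$ yields $(\mathcal{FW}_{\alpha,2}^n)^*=\bigcap_{i<j}\{X\in\mathbb{S}^n: E^{\alpha}_{ij}X(E^{\alpha}_{ij})^\tr\succeq 0\}$, the claimed formula (the stated set should carry the PSD constraint $\succeq 0$).

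For the properties of $\mathcal{FW}_{\alpha,2}^n$ itself, convexity and the cone property are immediate from the Minkowski-sum description, while pointedness follows because $\mathcal{FW}_{\alpha,2}^n\subseteq\mathbb{S}^n_+$ and the enclosing $\mathbb{S}^n_+$ is pointed. Solidity I would establish by exhibiting an interior point. Taking $X_{ij}=\tfrac{1}{p-1}I_{k_i+k_j}\succ0$ gives $\sum_{i<j}(E^{\alpha}_{ij})^\tr X_{ij}E^{\alpha}_{ij}=I_n$, since each diagonal block $(i,i)$ receives $\tfrac{1}{p-1}I_{k_i}$ from the $p-1$ pairs involving $i$ and every off-diagonal block cancels. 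Because $\Phi$ is surjective onto $\mathbb{S}^n$ (its image contains every diagonal and off-diagonal block and hence spans $\mathbb{S}^n$), it is an open map, so a decomposition whose summands are all strictly positive definite certifies that $I_n$ lies in the interior of $\mathcal{FW}_{\alpha,2}^n$.

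The main obstacle is closedness, since a Minkowski sum of closed cones need not be closed, so this cannot be asserted for free from the sum description. I would prove it by a boundedness/compactness argument that exploits the fact that every summand is PSD. Suppose $Z_m\to Z$ with $Z_m=\sum_{i<j}(E^{\alpha}_{ij})^\tr X_{ij}^{(m)}E^{\alpha}_{ij}$ and $X_{ij}^{(m)}\succeq0$. Then each lifted summand obeys $0\preceq(E^{\alpha}_{ij})^\tr X_{ij}^{(m)}E^{\alpha}_{ij}\preceq Z_m$, and using $E^{\alpha}_{ij}(E^{\alpha}_{ij})^\tr=I$ we get $\operatorname{tr}X_{ij}^{(m)}=\operatorname{tr}\big((E^{\alpha}_{ij})^\tr X_{ij}^{(m)}E^{\alpha}_{ij}\big)\le\operatorname{tr}Z_m$, which is bounded since $Z_m$ converges. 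Hence each sequence $\{X_{ij}^{(m)}\}_m$ lies in a compact slice of the PSD cone, and passing to a convergent subsequence (there are finitely many pairs) produces limits $X_{ij}^\star\succeq0$ with $Z=\sum_{i<j}(E^{\alpha}_{ij})^\tr X_{ij}^\star E^{\alpha}_{ij}$, so $Z\in\mathcal{FW}_{\alpha,2}^n$.

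Finally, for the dual cone I would appeal to standard facts rather than re-derive everything. Any dual cone is automatically closed and convex, so these two properties of $(\mathcal{FW}_{\alpha,2}^n)^*$ come for free. Its solidity and pointedness then follow from the duality correspondence for closed convex cones in finite dimensions, namely that a cone is pointed if and only if its dual is solid and solid if and only if its dual is pointed, applied to the facts just established for $\mathcal{FW}_{\alpha,2}^n$: since $\mathcal{FW}_{\alpha,2}^n$ is closed, convex, pointed, and solid, its dual is correspondingly solid and pointed, hence a proper cone.
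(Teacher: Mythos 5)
Your proposal is correct, and it runs in the opposite direction from the paper's proof. The paper works entirely on the dual side: it verifies convexity of $(\mathcal{FW}_{\alpha,2}^n)^*$ directly, pointedness from $X,-X\in(\mathcal{FW}_{\alpha,2}^n)^*\Rightarrow X=0$, solidity by exhibiting $I_n$ as an interior point, and closedness by writing $(\mathcal{FW}_{\alpha,2}^n)^*$ as an intersection of closed half-spaces $x_{ij}^\tr E^{\alpha}_{ij}X(E^{\alpha}_{ij})^\tr x_{ij}\geq 0$; it then concludes that $\mathcal{FW}_{\alpha,2}^n$, as the dual of a proper cone, is itself proper. You instead establish everything primal-side — the dual formula via the adjoint-of-lift computation (which is the paper's unstated ``direct computation''), pointedness from $\mathcal{FW}_{\alpha,2}^n\subseteq\mathbb{S}^n_+$, solidity from the explicit decomposition $I_n=\sum_{i<j}(E^{\alpha}_{ij})^\tr\frac{1}{p-1}I_{k_i+k_j}E^{\alpha}_{ij}$ with strictly positive definite summands plus openness of the surjective map $\Phi$ — and then transfer to the dual by the standard pointed/solid duality correspondence. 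Notably, your route is more complete on one point: the paper's final step ``the dual of $(\mathcal{FW}_{\alpha,2}^n)^*$ is $\mathcal{FW}_{\alpha,2}^n$'' tacitly uses biduality, which requires $\mathcal{FW}_{\alpha,2}^n$ to be closed (otherwise the bidual is only its closure), and closedness of a Minkowski sum of closed cones is not automatic; the paper never proves it. Your trace-compactness argument — bounding $\operatorname{tr}X_{ij}^{(m)}\leq\operatorname{tr}Z_m$ via $0\preceq(E^{\alpha}_{ij})^\tr X_{ij}^{(m)}E^{\alpha}_{ij}\preceq Z_m$ and $E^{\alpha}_{ij}(E^{\alpha}_{ij})^\tr=I_{k_i+k_j}$, then extracting convergent subsequences — supplies exactly this missing piece. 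You are also right that the displayed dual set in the statement is missing the constraint $E^{\alpha}_{ij}X(E^{\alpha}_{ij})^\tr\succeq 0$; this is a typo in the paper. In short, the paper's argument is shorter but leans on an unproved closedness fact, while yours is self-contained at the cost of the extra compactness lemma.
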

\begin{proof}
The dual is computed by direct computation. Now, $\forall X_1, X_2 \in (\mathcal{FW}_{\alpha,2}^n)^*$ and  $\theta_1, \theta_2 \geq 0$, it is straightforward to verify
    $$
        \theta_1 X_1 + \theta_2 X_2 \in (\mathcal{FW}_{\alpha,2}^n)^*.
    $$
    Thus, $(\mathcal{FW}_{\alpha,2}^n)^*$ is a convex cone. The cone $(\mathcal{FW}_{\alpha,2}^n)^*$  is pointed because  $X \in (\mathcal{FW}_{\alpha,2}^n)^*, -X \in (\mathcal{FW}_{\alpha,2}^n)^* $ implies that $X = 0$. Also, $I_n \in (\mathcal{FW}_{\alpha,2}^n)^*$ is an interior point. It is closed because it can
be expressed as 
the intersection of infinite closed half spaces in $\mathbb{S}^n$: $X \in (\mathcal{FW}_{\alpha,2}^n)^*$ if and only if $$x_{ij}^\tr E^{\alpha}_{ij} X (E^{\alpha}_{ij})^\tr x_{ij} \geq 0, \forall x_{ij} \in \mathbb{R}^{k_i + k_j}, 1\leq i < j \leq p.$$
Note that each constant vector $x_{ij} \in \mathbb{R}^{k_i + k_j}$ defines a linear constraint on the variable $X$, corresponding to a half space in $\mathbb{S}^n$.
Now it is clear that $(\mathcal{FW}_{\alpha,2}^n)^*$ is proper. Therefore, the dual of $(\mathcal{FW}_{\alpha,2}^n)^*$ is $\mathcal{FW}_{\alpha,2}^n$, which is also proper.
\end{proof}

Besides these topological properties, our notion of block factor-width-two matrices offers a tuning mechanism to build hierarchies of these cones.
Intuitively, changing the matrix partition should allow a trade-off between approximation quality and scalability of computations. In particular, the following theorem is the main result of this section.

\begin{theorem}\label{theo:inclusion}
    Given three partitions $\alpha = \{k_1, \ldots, k_p\}, \beta = \{l_1, \ldots, l_q\}$ and $\gamma = \{n_1,n_2\}$, where $\sum_{i=1}^p k_i = \sum_{i=1}^q l_i =n_1 + n_2 = n$ and $\alpha  \sqsubseteq \beta$, we have the following inner approximations of $\mathbb{S}^n_+$:
    \begin{align*}
        \mathcal{FW}^n_2 = \mathcal{FW}_{\mathbf{1},2}^n \subseteq  \mathcal{FW}_{\alpha,2}^n \subseteq  \mathcal{FW}_{\beta,2}^n \subseteq  \mathcal{FW}_{\gamma,2}^n = \mathbb{S}^n_{+},
        \end{align*}
        as well as the following outer approximations of $\mathbb{S}^n_+$
        \begin{align*}
        \mathbb{S}^n_{+} = (\mathcal{FW}_{\gamma,2}^n)^* \subseteq (\mathcal{FW}_{\beta,2}^n)^* \subseteq (\mathcal{FW}_{\alpha,2}^n)^* \subseteq  (\mathcal{FW}_{\mathbf{1},2}^n)^*
    \end{align*}
    where $\mathbf{1} = \{1, \ldots, 1\}$ denotes the trivial partition.
\end{theorem}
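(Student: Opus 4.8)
The plan is to establish the inner-approximation chain first and then deduce the outer chain by duality. The backbone is a single monotonicity statement: whenever $\alpha \sqsubseteq \beta$ (so that $\alpha$ is the finer partition, each $\beta$-block being a union of consecutive $\alpha$-blocks), one has $\mathcal{FW}_{\alpha,2}^n \subseteq \mathcal{FW}_{\beta,2}^n$. The two equalities at the ends of the chain are then bookkeeping. For the finest case, the trivial partition $\mathbf{1}$ has all blocks of size one, so in~\eqref{eq:BlkFW} every $E^{\mathbf{1}}_{ij}$ equals $E_{ij}$ and every $X_{ij}$ lies in $\mathbb{S}^2_+$; this is verbatim the factor-width-two characterization recalled above, giving $\mathcal{FW}_{\mathbf{1},2}^n = \mathcal{FW}^n_2$. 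For the coarsest case $\gamma = \{n_1, n_2\}$ there is a single pair $(1,2)$ with $E^{\gamma}_{12} = I_n$, so $Z = X_{12}$ ranges over all of $\mathbb{S}^n_+$, i.e. $\mathcal{FW}_{\gamma,2}^n = \mathbb{S}^n_+$. Finally, every summand in~\eqref{eq:BlkFW} is a lift of a PSD matrix and hence PSD, so $\mathcal{FW}_{\beta,2}^n \subseteq \mathbb{S}^n_+ = \mathcal{FW}_{\gamma,2}^n$ for free.

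To prove the monotonicity statement I would argue directly at the level of the decomposition. Let $\phi$ send each $\alpha$-block index to the $\beta$-block index containing it (well defined since $\alpha \sqsubseteq \beta$ groups consecutive $\alpha$-blocks). Given $Z = \sum_{i<j} (E^{\alpha}_{ij})^\tr X_{ij} E^{\alpha}_{ij} \in \mathcal{FW}_{\alpha,2}^n$, I would redistribute each summand $S_{ij} := (E^{\alpha}_{ij})^\tr X_{ij} E^{\alpha}_{ij}$, which is PSD and supported on $\alpha$-blocks $i$ and $j$, onto a single $\beta$-pair. If $\phi(i) = r \neq s = \phi(j)$, then $S_{ij}$ is supported on the $\{r,s\}$ principal $\beta$-submatrix, and I assign its truncation $E^{\beta}_{rs} S_{ij} (E^{\beta}_{rs})^\tr$ to the pair $(r,s)$. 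If instead $\phi(i) = \phi(j) = r$, then $S_{ij}$ is supported inside the single $\beta$-block $r$; here I pick any index $s \neq r$ (available because $\beta$ has at least two blocks) and assign $E^{\beta}_{rs} S_{ij} (E^{\beta}_{rs})^\tr = \mathrm{diag}(W, 0)$, with $W = E^{\beta}_r S_{ij} (E^{\beta}_r)^\tr \succeq 0$, to the pair $(r,s)$. Summing the assigned truncations over all $\alpha$-pairs mapped to a given $\beta$-pair defines $Y_{rs} \in \mathbb{S}^{l_r + l_s}_+$ (a sum of principal submatrices of PSD matrices), and since each $S_{ij}$ is supported within the $\{r,s\}$ blocks onto which it is assigned, lifting back recovers $Z = \sum_{r<s} (E^{\beta}_{rs})^\tr Y_{rs} E^{\beta}_{rs}$, so $Z \in \mathcal{FW}_{\beta,2}^n$.

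The one step needing care — and the only genuine obstacle — is the case $\phi(i) = \phi(j)$, where a summand's support collapses into a single diagonal $\beta$-block: such a term has no natural off-diagonal $\beta$-partner, yet the defining form~\eqref{eq:BlkFW} admits no purely diagonal summands. The resolution is exactly the redistribution above, legitimate precisely because $\beta$ retains at least two blocks (guaranteed throughout the chain, as even the coarsest partition $\gamma$ has two). Here I would verify the two routine facts that make the redistribution sound, namely that $\mathrm{diag}(W,0) \succeq 0$ and that zero-padding followed by re-lifting reproduces $S_{ij}$ unchanged, so the total sum is preserved. Applying the monotonicity statement along $\mathbf{1} \sqsubseteq \alpha$ and $\alpha \sqsubseteq \beta$, together with the two endpoint identities, yields the inner chain. (Equivalently, one could induct on elementary merges of two adjacent blocks, but the direct assignment handles the general coarsening at once.)

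The outer chain then follows formally. By Proposition~\ref{prop:set-properties} each $\mathcal{FW}_{\cdot,2}^n$ is a proper cone, so dualization is inclusion-reversing and involutive; applying $(\cdot)^*$ to every inclusion of the inner chain reverses it, and using self-duality $(\mathbb{S}^n_+)^* = \mathbb{S}^n_+$ at the endpoint $(\mathcal{FW}_{\gamma,2}^n)^* = \mathbb{S}^n_+$ gives the stated hierarchy of outer approximations.
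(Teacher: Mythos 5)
Your proof is correct, and it rests on the same core mechanism as the paper's: redistribute the PSD summands of an $\alpha$-decomposition among $\beta$-pairs, the delicate case being a summand $S_{ij}$ whose two $\alpha$-blocks merge into a single $\beta$-block and which therefore has no off-diagonal $\beta$-partner. The execution differs in two genuine ways, though. The paper first proves block-permutation invariance (statement 3 of Corollary~\ref{prop:invariant}) so that it may reduce to the elementary merge $\alpha = \{k_1,\ldots,k_p,k_{p+1}\} \mapsto \beta = \{k_1,\ldots,k_{p-1},k_p+k_{p+1}\}$ of the \emph{last} two blocks, and it then splits the collapsed diagonal summand $X_{p(p+1)}$ \emph{evenly}, with weight $\frac{1}{p-1}$, over all $p-1$ pairs involving the merged block, the general coarsening following by iterating such merges. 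You instead handle an arbitrary coarsening in one shot via the index map $\phi$, and absorb each collapsed summand wholesale into a single arbitrarily chosen pair $(r,s)$ as a padded block $\mathrm{diag}(W,0)$ (with the trivial reordering $\mathrm{diag}(0,W)$ when $s<r$). This buys independence from both the induction and the permutation-invariance lemma, which your argument never needs, at no cost: the two facts you flag — that a principal-submatrix truncation of a PSD lift is PSD, and that truncating then re-lifting reproduces a summand supported inside the chosen $\beta$-blocks — are exactly the routine verifications required, and your observation that every partition in the chain has at least two blocks is the right hypothesis to make explicit, since the defining form~\eqref{eq:BlkFW} admits no purely diagonal summands. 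Your endpoints are also handled soundly, and notably you correctly avoid invoking monotonicity between $\beta$ and $\gamma$ (the relation $\beta \sqsubseteq \gamma$ need not hold), instead getting $\mathcal{FW}^n_{\beta,2} \subseteq \mathcal{FW}^n_{\gamma,2}$ from positive semidefiniteness of the summands together with $\mathcal{FW}^n_{\gamma,2} = \mathbb{S}^n_+$; the dualization step matches the paper's standard argument.
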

\begin{proof}
     First, $\mathcal{FW}^n_2 = \mathcal{FW}_{\mathbf{1},2}^n$ and $\mathcal{FW}_{\gamma,2}^n = \mathbb{S}^n_{+}$ are true by definition. We only need to prove $\mathcal{FW}_{\alpha,2}^n \subset  \mathcal{FW}_{\beta,2}^n $ when $\alpha  \sqsubseteq \beta$, since  we always have $\mathbf{1} = \{1, \ldots, 1\} \sqsubseteq \alpha$ for a non-trivial partition $\alpha$.

    As we will show in Corollary~\ref{prop:invariant}, $\mathcal{FW}^n_{\alpha,2}$ is invariant with respect to block $\alpha$-permutation. Therefore,  to prove $\mathcal{FW}_{\alpha,2}^n \subset  \mathcal{FW}_{\beta,2}^n $ when $\alpha  \sqsubseteq \beta$, it is sufficient to consider the case
    \begin{equation} \label{eq:alpha_beta}
    \begin{aligned}
        \alpha &= \{k_1, \ldots, k_{p-1}, k_p, k_{p+1}\}, \\
        \beta  &= \{k_1, \ldots, k_{p-1}, k_p+k_{p+1}\},
    \end{aligned}
    \end{equation}
    where the partition $\beta$ is formed by merging the last two blocks in $\alpha$ and keeping the other blocks unchanged. All the other cases $\alpha  \sqsubseteq \beta$ can be formed recursively by combining the construction~\eqref{eq:alpha_beta} with some block $\alpha$-permutation.

    We now prove $\mathcal{FW}_{\alpha,2}^n \subset  \mathcal{FW}_{\beta,2}^n $ for~\eqref{eq:alpha_beta}. Our proof is constructive: for any $X \in \mathcal{FW}_{\alpha,2}^n$, we show that $X \in \mathcal{FW}_{\beta,2}^n$. Let $E^{\alpha}_{ij}, 1 \leq i < j \leq p+1$ be the decomposition bases for the $\alpha$-partition, and $E^{\beta}_{ij}, 1 \leq i < j \leq p$ be decomposition bases for the $\beta$-partition.  By definition~\eqref{eq:blockbasis}, we have
    $$
    E^{\beta}_{ij} = E^{\alpha}_{ij}, \qquad 1 \leq i < j \leq p-1,
    $$
    since the first $p-1$ blocks are the same for $\alpha$ and $\beta$.
    Given any $X \in \mathcal{FW}^n_{\alpha,2}$, there exist $X_{ij}\in \mathbb{S}^{k_i + k_j}_+$ such that
    \begin{equation} \label{eq:inclusion_s1}
                \begin{aligned}
            X &= \sum_{i=1}^p\sum_{j=i+1}^{p+1} (E^{\alpha}_{ij})^\tr X_{ij} E^{\alpha}_{ij} \\
            & = \sum_{i=1}^{p-1}\sum_{j=i+1}^{p-1} (E^{\alpha}_{ij})^\tr X_{ij} E^{\alpha}_{ij} + \sum_{i=1}^{p-1}(E^{\alpha}_{ip})^\tr X_{ip} E^{\alpha}_{ip}  \\
             &\qquad \qquad \qquad  +\sum_{i=1}^{p}(E^{\alpha}_{i(p+1)})^\tr X_{i(p+1)} E^{\alpha}_{i(p+1)}.
        \end{aligned}
    \end{equation}

    We proceed with constructing $\hat{X}_{ij}$ such that $X$ can be decomposed as
    \begin{equation}\label{eq:inclusion_s2}
         X = \sum_{i=1}^{p-1}\sum_{j=i+1}^{p} (E^{\beta}_{ij})^\tr \hat{X}_{ij} E^{\beta}_{ij}.
    \end{equation}
    Since the first $p-1$ blocks are the same in both partitions, we can choose
    \begin{equation} \label{eq:construction_hatX}
        \hat{X}_{ij} = X_{ij}, \qquad 1 \leq i < j \leq p -1.
    \end{equation}
    Comparing~\eqref{eq:inclusion_s1} with~\eqref{eq:inclusion_s2}, it remains to construct $\hat{X}_{ip}, \; i = 1, \ldots, p-1$ such that
    \begin{equation}\label{eq:inclusion_s3}
    \begin{aligned}
                \sum_{i=1}^{p-1} (E^{\beta}_{ip})^\tr \hat{X}_{ip} E^{\beta}_{ip} = & \sum_{i=1}^{p-1}(E^{\alpha}_{ip})^\tr X_{ip} E^{\alpha}_{ip}  \\ &+\sum_{i=1}^{p}(E^{\alpha}_{i(p+1)})^\tr X_{i(p+1)} E^{\alpha}_{i(p+1)}.
    \end{aligned}
    \end{equation}
    Consider the matrices $X_{ij} \in \mathbb{S}^{k_i + k_j}_+, 1 \leq i \leq p-1, j = p, p+1,$ in~\eqref{eq:inclusion_s3}, and we split them according to its partition
    $$
        X_{ij} = \begin{bmatrix} X_{ij,1} & X_{ij,2} \\
        \star & X_{ij,3}\end{bmatrix}
    $$
    with $X_{ij,1} \in \mathbb{S}^{k_i}_+, X_{ij,3} \in \mathbb{S}^{k_j}_+$ and $\star$ denoting the symmetric part. Then, based on some direct calculations, it can be verified that~\eqref{eq:inclusion_s3} holds when choosing $\hat{X}_{ip}, 1 \leq i \leq p-1$ as follows
    \begin{equation} \label{eq:construction_hatX_s2}
    \begin{aligned}
        &\hat{X}_{ip} = \frac{1}{p-1}\begin{bmatrix} 0 & 0 & 0 \\
                                 0 & X_{p(p+1),1} & X_{p(p+1),2} \\
                                 0 & \star & X_{p(p+1),3}\end{bmatrix} + \\
                               &\quad \begin{bmatrix} X_{i(p+1),1} & 0 & X_{i(p+1),2}  \\
                                 0 & 0 & 0 \\
                                 \star & 0 & X_{i(p+1),3}\end{bmatrix}\! + \!\begin{bmatrix} X_{ip,1} & X_{ip,2} & 0 \\
                                 \star & X_{ip,3} & 0 \\
                                 0 & 0 & 0\end{bmatrix}\!.
    \end{aligned}
    \end{equation}
    This completes the proof of the hierarchy of inner approximations using $\mathcal{FW}^{\alpha}_2$.

    Finally, the hierarchy of outer approximations using $(\mathcal{FW}_2^{\alpha})^*$ holds by standard duality arguments.
\end{proof}

We now compare the block factor-width-two matrices $\mathcal{FW}_{\alpha,2}^n$ and the standard factor-width $k$ matrices $\mathcal{FW}_{k}^n$. First, it is easy to notice that when $\alpha = \{k, \ldots, k\}$ and $n = kp$ (for which we call the partition is \emph{homogeneous}), we have
    \begin{equation*}
    \begin{aligned}
        \mathcal{FW}_{\alpha,2}^n \subseteq \mathcal{FW}_{2 k}^n,  \quad
        (\mathcal{FW}_{2 k}^n)^\ast \subseteq (\mathcal{FW}_{\alpha,2}^n)^\ast.
    \end{aligned}
    \end{equation*}
Second, both $\mathcal{FW}^n_{\alpha,2}$ and $\mathcal{FW}^n_{k}$ can be used to construct a hierarchy of inner/outer approximations of $\mathbb{S}^n_{+}$. One major difference lies in the number of basis matrices. In $\mathcal{FW}^n_{k}$, we need ${n \choose k}$ basis matrices for a complete parameterization, as shown in~\eqref{eq:fwk}, which is usually prohibitive in practice. Instead, in $\mathcal{FW}^n_{\alpha,2}$, we build a sequence of coarser partitions, and the number of basis matrices has been reduced to $\frac{p(p-1)}{2}$, which is more practical for numerical computation when the size of each block is moderate. Therefore, the cones $\cFW_{\alpha, 2}^n$ are often more scalable in terms of the number of variables (see Sections~\ref{section:Application} and~\ref{section:Experiments} for applications and experiments). 
\begin{figure}
	\centering
	\includegraphics[scale=.68]{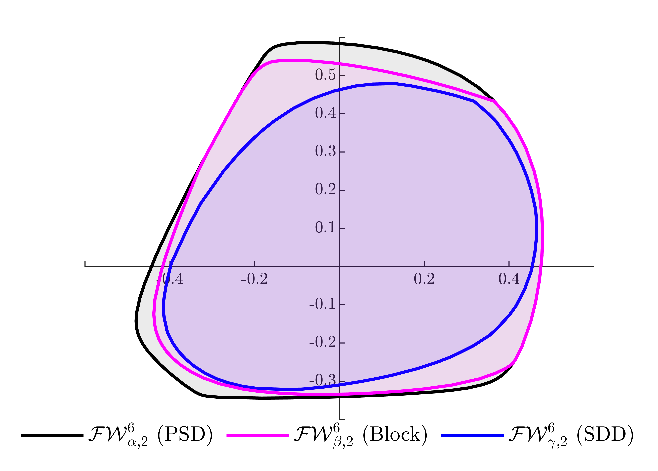}
	\caption{Boundry of the set of $x$ and $y$ for which the $6 \times 6$ symmetric matrix $    I_6 + xA + y B$ belongs to $\mathcal{FW}^6_{\alpha,2}, \mathcal{FW}^6_{\beta,2}$, and $\mathcal{FW}^6_{\gamma,2}$, where $\alpha=\{4, 2\}$, $\beta = \{2, 2, 2\}$, $\gamma = \{1, 1, 1, 1, 1, 1\}$. The relation $ \gamma \sqsubseteq \beta \sqsubseteq \alpha$ is reflected in the inclusion of $\SDD_6 =\mathcal{FW}^6_{\gamma,2} \subset \mathcal{FW}^6_{\beta,2} \subset  \mathcal{FW}^6_{\alpha,2}=\mathbb{S}^6_+$. }
	\label{FIG:3}
\end{figure}

 \begin{example}
We here illustrate the approximation quality of the cone $\cFW_{\alpha, 2}^n$ using {Fig.~\ref{FIG:3}}, where we plot the boundary of the set of $x$ and $y$ for which the $6 \times 6$ symmetric matrix
$$
    I_6 + xA + y B
$$
belongs to $\mathcal{FW}^6_{\alpha,2}, \mathcal{FW}^6_{\beta,2}$, and $\mathcal{FW}^6_{\gamma,2}$, where the partitions are the same as the example in Fig.~\ref{FIG:1}, \emph{i.e.}, $\alpha=\{4, 2\}$, $\beta = \{2, 2, 2\}$, $\gamma = \{1, 1, 1, 1, 1, 1\}$. In this case, $\mathcal{FW}^6_{\alpha,2}, \mathcal{FW}^6_{\gamma,2}$ are the same as PSD, and SDD, respectively. Here, the matrices $A$ and $B$ were generated randomly with independent and identically distributed entries sampled form the standard normal distribution. As expected by Theorem~\ref{theo:inclusion},  the relation $ \gamma \sqsubseteq \beta \sqsubseteq \alpha$ is reflected in the inclusion of $\SDD_6 =\mathcal{FW}^6_{\gamma,2} \subset \mathcal{FW}^6_{\beta,2} \subset  \mathcal{FW}^6_{\alpha,2}=\mathbb{S}^6_+$. \qed
 \end{example}

\begin{example} \label{example:factor_decomposition}
We consider another example to further illustrate Theorem~\ref{theo:inclusion}:
$$
    X = \begin{bmatrix} 6 & 8 & -2 & -2\\
    8 & 16 & 1 &1 \\
    -2 & 1 & 10 & -1\\
    -2& 1 & -1 & 24\end{bmatrix}.
$$
It can be verified that $X \in \mathcal{FW}^4_{\alpha,2}$ with partition $\alpha = \{1,1,1,1\}$, and the matrices in the decomposition~\eqref{eq:BlkFW} can be chosen as follows
$$
    \begin{aligned}
    X_{12} &= \begin{bmatrix} 4.5 & 8\\ 8 & 14.5 \end{bmatrix},
    X_{13} = \begin{bmatrix} 1 & -2\\ -2 & 6\end{bmatrix},
   X_{14} = \begin{bmatrix} 0.5 & -2\\ -2 & 12\end{bmatrix},
\\
    X_{23} &= \begin{bmatrix} 1 & 1\\ 1 & 2\end{bmatrix},  X_{24} = \begin{bmatrix} 0.5 & 1\\ 1 & 6\end{bmatrix}, X_{34} = \begin{bmatrix} 2 & -1\\ -1 & 6\end{bmatrix}.
    \end{aligned}
$$
Here, we note that the off-diagonal elements of $X_{ij}$ are the same with the corresponding off-diagonal elements of $X$. This fact motivates our alternative characterizations of $\mathcal{FW}^n_{\alpha,2}$ in Theorem~\ref{theo:blockfw}.

If we collapse the last two blocks into one single block and obtain a coarser partition $\beta = \{1,1,2\}$, then Theorem~\ref{theo:inclusion} confirms $X \in \mathcal{FW}^4_{\beta,2}$. Indeed, following the constructions in~\eqref{eq:construction_hatX} and~\eqref{eq:construction_hatX_s2}, we can choose $\hat{X}_{12} = X_{12}$ and obtain
$$
    \hat{X}_{13} = \begin{bmatrix} 1.5&-2&-2\\
    -2&7&-0.5\\
    -2&-0.5&15\end{bmatrix},  \hat{X}_{23} = \begin{bmatrix} 1.5&1&1\\
    1&3&-0.5\\
    1&-0.5&9\end{bmatrix}.
$$
\qed
\end{example}

\subsection{Another characterization and its corollaries}

As discussed in Example~\ref{example:factor_decomposition}, the decomposition matrices $X_{i j}$ use the actual off-diagonal values of the matrix $X$. This observation {allows us to derive an alternative description} of $\mathcal{FW}_{\alpha,2}^n$, offering a new interpretation of block factor-width-two matrices in terms of scaled diagonally dominance.
\begin{theorem} \label{theo:blockfw}
    Given a partition $\alpha = \{k_1, \ldots, k_p\}$ with $\sum_{i=1}^p k_i= n$, we have $A \in \mathcal{FW}_{\alpha,2}^n$ if and only if there exist $Z_{ij} \in \mathbb{S}^{k_i}_+, i, j = 1, \ldots, p, i \neq j$, such that
    \begin{subequations}
    \begin{align} 
        A_{ii} \succeq \sum_{j =1, j \neq i}^p Z_{ij}, &\,\,\forall\; i = 1, \ldots, p  \label{eq:blockfw2_s1}\\
        \begin{bmatrix} Z_{ij} & A_{ij} \\
         \star  & Z_{ji}\end{bmatrix} \succeq 0, &\,\,\forall \; 1 \leq i < j \leq p, \label{eq:blockfw2_s2}
    \end{align}
    \end{subequations}
    where $\star$ denotes the symmetric counterpart.
\end{theorem}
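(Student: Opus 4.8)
The plan is to unfold the definition~\eqref{eq:BlkFW} into block components and match them against the two stated conditions. First I would write each summand $X_{ij}\in\mathbb{S}^{k_i+k_j}_+$ (for $i<j$) in block form
$$
X_{ij}=\begin{bmatrix} P_{ij} & R_{ij}\\ \star & Q_{ij}\end{bmatrix},\qquad P_{ij}\in\mathbb{S}^{k_i},\ Q_{ij}\in\mathbb{S}^{k_j},\ R_{ij}\in\mathbb{R}^{k_i\times k_j}.
$$
Since $(E^{\alpha}_{ij})^\tr X_{ij}E^{\alpha}_{ij}$ deposits $P_{ij},R_{ij},Q_{ij}$ into block positions $(i,i),(i,j),(j,j)$ and no other summand touches the off-diagonal position $(i,j)$, collecting terms in~\eqref{eq:BlkFW} gives the key identities
$$
A_{ij}=R_{ij}\ (i<j),\qquad A_{ii}=\sum_{j>i}P_{ij}+\sum_{j<i}Q_{ji}.
$$
These bridge the decomposition form and the inequality characterization. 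The forward direction then reads off immediately: setting $Z_{ij}:=P_{ij}$ and $Z_{ji}:=Q_{ij}$ for $i<j$, each $Z_{ij}$ is PSD as a principal submatrix of $X_{ij}\succeq0$; condition~\eqref{eq:blockfw2_s2} is exactly $X_{ij}\succeq0$ after substituting $R_{ij}=A_{ij}$; and the diagonal identity gives $A_{ii}=\sum_{j\neq i}Z_{ij}$, which satisfies~\eqref{eq:blockfw2_s1} with equality.

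For the converse, given PSD matrices $Z_{ij}$ satisfying~\eqref{eq:blockfw2_s1}--\eqref{eq:blockfw2_s2}, I would define for each $i<j$
$$
X_{ij}:=\begin{bmatrix} Z_{ij} & A_{ij}\\ \star & Z_{ji}\end{bmatrix}\succeq0,
$$
which is PSD precisely by~\eqref{eq:blockfw2_s2} and reproduces every off-diagonal block of $A$. The diagonal blocks of $\sum_{i<j}(E^{\alpha}_{ij})^\tr X_{ij}E^{\alpha}_{ij}$, however, come out as $\sum_{j\neq i}Z_{ij}$, which~\eqref{eq:blockfw2_s1} only guarantees to be dominated by $A_{ii}$. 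I expect this \emph{inequality} (rather than equality) in~\eqref{eq:blockfw2_s1} to be the main obstacle: one must absorb the PSD slack $D_i:=A_{ii}-\sum_{j\neq i}Z_{ij}\succeq0$ into the decomposition without spoiling positive semidefiniteness of any summand.

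The resolution I would use is the elementary fact that augmenting a diagonal block of a PSD matrix by a PSD matrix keeps it PSD, since $\bigl[\begin{smallmatrix} D & 0\\ 0 & 0\end{smallmatrix}\bigr]\succeq0$. Concretely, in the nontrivial case $p\geq2$ I would fold each $D_i$ (for $i\leq p-1$) into the top-left block of $X_{i,i+1}$ and fold the last slack $D_p$ into the bottom-right block of $X_{p-1,p}$; only these summands change. A short bookkeeping check then confirms that each modified summand remains PSD, all off-diagonal blocks are unchanged, and every diagonal block now equals exactly $A_{ii}$, exhibiting $A$ in the form~\eqref{eq:BlkFW} and closing the converse.
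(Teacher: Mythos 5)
Your proposal is correct and follows essentially the same route as the paper: unfold~\eqref{eq:BlkFW} blockwise to get $A_{ij}=X_{ij,2}$ and $A_{ii}=\sum_{j>i}X_{ij,1}+\sum_{j<i}X_{ji,3}$ for the forward direction, and for the converse absorb the PSD slack $A_{ii}-\sum_{j\neq i}Z_{ij}$ into the diagonal blocks of the summands. The only (immaterial) difference is bookkeeping: the paper spreads each slack evenly, adding $\frac{1}{p-1}Q_{ii}$ to every summand involving block $i$, whereas you concentrate all of $D_i$ in a single adjacent summand; both preserve positive semidefiniteness by the same observation that adding $\bigl[\begin{smallmatrix} D & 0\\ 0 & 0\end{smallmatrix}\bigr]\succeq 0$ keeps a summand PSD.
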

\begin{proof}
    $\Rightarrow:$ Suppose $A \in \mathcal{FW}_{\alpha,2}^n$. By definition, we have
    \begin{equation} \label{eq:blockfw}
        A = \sum_{i=1}^{p-1}\sum_{j=i+1}^p (E^{\alpha}_{ij})^\tr X_{ij} E^{\alpha}_{ij}
    \end{equation}
    for some $X_{ij} \in \mathbb{S}^{k_i+k_j}_+$. Let $X_{ij} \in \mathbb{S}^{k_i+k_j}_+$ in~\eqref{eq:blockfw} be partitioned as
    \begin{equation} \label{eq:blockfactor}
        X_{ij} = \begin{bmatrix} X_{ij,1} & X_{ij,2} \\ \star  & X_{ij,3} \end{bmatrix} \succeq 0,
    \end{equation}
    with $ X_{ij,1} \in \mathbb{S}^{k_i}_+, X_{ij,3} \in \mathbb{S}^{k_j}_+$. By construction, we know
    $$
    \begin{aligned}
        A_{ij} &= X_{ij,2}, \forall \; 1 \leq i < j \leq p, \\
        A_{ii} &= \sum_{i < j} X_{ij,1} + \sum_{i > j} X_{ji,3}, \forall \; i = 1, \ldots, p.
        \end{aligned}
    $$
    Now we set
    $$
        Z_{ij} =  \begin{cases}
           X_{ij,1}, \quad \text{if}\; i < j,\\
           X_{ji,3}, \quad \text{if}\; i > j,\\
        \end{cases}
    $$
    which naturally satisfy~\eqref{eq:blockfw2_s1} and~\eqref{eq:blockfw2_s2}.

    $\Leftarrow:$ Suppose we have~\eqref{eq:blockfw2_s1} and~\eqref{eq:blockfw2_s2}. We next construct $X_{ij}\in \mathbb{S}^{k_i+k_j}_+, 1 \leq i<j\leq p$ of the form~\eqref{eq:blockfactor} that satisfy~\eqref{eq:blockfw}. We first let
    $$
        Q_{ii} = {A_{ii} - \sum_{j=1, j\neq i}^p Z_{ij}} \succeq 0, i = 1, \ldots, p.
    $$
    Now, $\forall 1 \leq i<j\leq p$, we set
    $$
        X_{ij,1} = Z_{ij} + \frac{1}{p-1}Q_{ii}, \; X_{ij,2} = A_{ij}, \; X_{ij,3} = Z_{ji} + \frac{1}{p-1}Q_{jj}.
    $$
    Since we have~\eqref{eq:blockfw2_s2}, we know $X_{ij}\in \mathbb{S}^{k_i+k_j}_+, 1 \leq i<j\leq p$ are in the form~\eqref{eq:blockfactor}. Also, by construction, we have~\eqref{eq:blockfw} is satisfied. Thus, $A  \in \mathcal{FW}_{\alpha,2}^n$.
\end{proof}

For illustration, we remark that for a partition with two blocks, \emph{i.e.}, $\alpha = \{k_1, k_2\}$ with $k_1 + k_2 = n$, Theorem~\ref{theo:blockfw} simply enforces a PSD property on matrix $A$, \emph{i.e.},
$$
    A = \begin{bmatrix} A_{11} & A_{12} \\ * & A_{22}\end{bmatrix} \succeq 0
$$
if and only if there exists $Z_{12} \in \mathbb{S}^{k_1}_+, Z_{21} \in \mathbb{S}^{k_2}_+$ such that
$$
    A_{11} \succeq Z_{12}, \quad A_{22} \succeq Z_{21},  \quad \begin{bmatrix} Z_{12} & A_{12} \\ \star  & Z_{21}\end{bmatrix} \succeq 0.
$$
This representation for a $2\times 2$ block-partitioned matrix is just to illustrate~\eqref{eq:blockfw2_s1}-\eqref{eq:blockfw2_s2} in Theorem~\ref{theo:blockfw}, but we note that it is not useful for scalable numerical computation.

\begin{remark}[Block scaled diagonal dominance]
Theorem~\ref{theo:blockfw} shows that the class of block factor-width-two matrices can be considered as a block extension of the SDD matrices. It can be interpreted that the diagonal block $A_{ii}$ should dominate the sum of the off-diagonal blocks $A_{ij}$ in terms of positive semidefiniteness. \qed
\end{remark}

The conditions~\eqref{eq:blockfw2_s1} and~\eqref{eq:blockfw2_s2} were derived using a block generalization of the strategies for the SDD matrices in~\cite{sootla2017block,sootla2019existence}. Indeed,~\eqref{eq:blockfw2_s1} and~\eqref{eq:blockfw2_s2} reduce to the condition of scaled diagonal dominance in the trivial partition case, \emph{i.e.}, $\alpha = \{1, \ldots, 1\}$, $A = [a_{ij}] \in \mathbb{S}^n$. In this case,~\eqref{eq:blockfw2_s1} and~\eqref{eq:blockfw2_s2} become
 \begin{subequations}
    \begin{align} 
        a_{ii} &\geq \sum_{j =1, j \neq i}^n z_{ij},& &\forall \; i = 1, \ldots, n,  \label{eq:fw2_s1}\\
        |a_{ij}| &\leq \sqrt{z_{ij}z_{ji}}, & &\forall \; 1 \leq i < j \leq n, \label{eq:fw2_s2} \\
        z_{ij} &\geq 0, &  &\forall \; i, j = 1, \ldots, n, i \neq j. \label{eq:fw2_s3}
    \end{align}
    \end{subequations}
    We have the following result.
    \begin{corollary}\label{prop:fwsdd}
            Given a symmetric matrix $A  = [a_{ij}] \in \mathbb{S}^n$, the following statements are equivalent.
            \begin{enumerate}
               \item $A \in \mathcal{FW}^n_2$;
                \item             There exists $z_{ij} \geq 0$ satisfying~\eqref{eq:fw2_s1} -- \eqref{eq:fw2_s3};
            \item $A \in \SDD_n$.
            \end{enumerate}
    \end{corollary}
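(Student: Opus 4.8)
The plan is to prove the equivalence $(1)\Leftrightarrow(2)$ directly as the trivial-partition specialization of Theorem~\ref{theo:blockfw}, and then to close the triangle using the already-quoted identity $\SDD_n=\mathcal{FW}^n_2$ from~\cite{boman2005factor} for $(1)\Leftrightarrow(3)$. The observation driving the whole argument is that $(3)$ is, by that cited result, literally the same set as $(1)$, so once $(1)\Leftrightarrow(2)$ is established there is nothing further to do.

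For $(1)\Leftrightarrow(2)$ I would set $\alpha=\{1,\ldots,1\}$ in Theorem~\ref{theo:blockfw}. Under this partition every block collapses to a scalar: $A_{ii}=a_{ii}$, $A_{ij}=a_{ij}$, and each certificate $Z_{ij}\in\mathbb{S}^1_+$ is just a nonnegative scalar $z_{ij}\geq 0$, which already accounts for~\eqref{eq:fw2_s3}. Condition~\eqref{eq:blockfw2_s1} then reads $a_{ii}\geq\sum_{j\neq i}z_{ij}$, reproducing~\eqref{eq:fw2_s1}. The only ingredient with any content is the reduction of the $2\times 2$ matrix inequality~\eqref{eq:blockfw2_s2}: I would apply the standard determinant criterion that a symmetric matrix $\begin{bmatrix} z_{ij} & a_{ij} \\ \star & z_{ji}\end{bmatrix}$ is PSD if and only if $z_{ij}\geq 0$, $z_{ji}\geq 0$, and $z_{ij}z_{ji}-a_{ij}^2\geq 0$, the last being exactly $|a_{ij}|\leq\sqrt{z_{ij}z_{ji}}$, i.e.~\eqref{eq:fw2_s2}. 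Since Theorem~\ref{theo:blockfw} is stated as an if-and-only-if, this equivalence transfers verbatim and yields $(1)\Leftrightarrow(2)$.

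The subtlety I want to sidestep is the \emph{direct} equivalence $(2)\Leftrightarrow(3)$. Producing the scalars from a scaling is easy: given a positive diagonal $D=\mathrm{diag}(d_1,\ldots,d_n)$ making $DAD$ diagonally dominant, one can take $z_{ij}=(d_j/d_i)\,|a_{ij}|$, and a short computation verifies~\eqref{eq:fw2_s1}--\eqref{eq:fw2_s3} (with equality in~\eqref{eq:fw2_s2}). The reverse direction, however, requires synthesizing a positive diagonal $D$ rendering $DAD$ diagonally dominant out of the certificates $z_{ij}$; this is an H-matrix / Perron--Frobenius type existence statement and is precisely the nontrivial half of the known identity $\SDD_n=\mathcal{FW}^n_2$. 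I therefore prefer to route $(3)$ through $(1)$ by invoking that identity rather than reproving the scaling construction, which keeps the corollary a short and clean consequence of Theorem~\ref{theo:blockfw}.
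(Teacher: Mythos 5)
Your proposal is correct, and your handling of $(1)\Leftrightarrow(2)$ — specializing Theorem~\ref{theo:blockfw} to the trivial partition $\alpha=\{1,\ldots,1\}$ and reducing~\eqref{eq:blockfw2_s2} to $|a_{ij}|\leq\sqrt{z_{ij}z_{ji}}$ via the $2\times 2$ determinant criterion — is exactly the paper's first step. Where you genuinely diverge is the link to $(3)$: you route it through $(1)$ by invoking the cited identity $\SDD_n=\mathcal{FW}^n_2$ from~\cite{boman2005factor}, whereas the paper proves $2\Leftrightarrow 3$ directly. The easy direction $3\Rightarrow 2$ is the same in both (the choice $z_{ij}=(d_j/d_i)|a_{ij}|$), but for $2\Rightarrow 3$ the paper supplies the Perron--Frobenius construction you deliberately sidestepped: from the certificates $z_{ij}$ it builds the zero-row-sum matrix $M$ in~\eqref{eq:M} (symmetrizing the zero pattern using~\eqref{eq:fw2_s2}), splits into irreducible blocks, shifts by $\xi I_n$ with $\xi=\max_i|m_{ii}|$ to obtain a nonnegative matrix with Perron eigenvalue $\xi$ and a positive left eigenvector whose entries are the squares $d_i^2$ of the desired scalings (Lemma~\ref{lem:sdd}), and then verifies diagonal dominance of $DAD$ through the chain~\eqref{eq:SDD_s1} using $x+y\geq 2\sqrt{xy}$. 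The reason the paper takes the long road is stated immediately after the corollary: the result is advertised as \emph{another proof} of the equivalence $\SDD_n=\mathcal{FW}^n_2$, independent of the rank-one decomposition argument of~\cite{boman2005factor}. Your shortcut is logically sound — the cited identity is externally established, so no circularity of fact arises — and it buys brevity and a cleaner dependence structure on Theorem~\ref{theo:blockfw}; but it presupposes exactly the equivalence the corollary is meant to re-derive, so substituted into this paper it would empty the corollary of its announced purpose. To your credit, you pinpointed the H-matrix/Perron--Frobenius existence statement as the sole nontrivial content; the paper's Appendix~\ref{app:B} is precisely that construction carried out.
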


Corollary~\ref{prop:fwsdd} is proved in Appendix~\ref{app:B} and presents another proof for the equivalence that $\SDD_n = \mathcal{FW}^n_2$. This equivalence was originally proved in~\cite{boman2005factor} which relies on expressing a diagonally dominant matrix $A$ as a sum of rank-1 matrices.
    The alternative description of $\mathcal{FW}^n_{\alpha,2}$ in Theorem~\ref{theo:blockfw} allows for deducing a few useful properties of block factor-width-two matrices.

    \begin{corollary}\label{prop:invariant}
    Given a partition $\alpha = \{k_1, \ldots, k_p\}$ with $\sum_{i = 1}^p k_i = n$, we have the following statements:
            \begin{enumerate}
                \item $A \in \mathcal{FW}^n_{\alpha, 2}$ if and only if $DAD^\tr \in \mathcal{FW}^n_{\alpha,2}$ for any invertible block-diagonal matrix $D = \text{diag}(D_1, \ldots, D_p)$, where $D_i \in \mathbb{R}^{k_i}, i = 1, \ldots, p$.
                \item For any $X \in \mathbb{S}^n$, there exist $A, B \in \mathcal{FW}^n_{\alpha,2}$ such that $X = A - B$.
                \item $\mathcal{FW}^n_{\alpha,2}$ is invariant with respect to block $\alpha$-permutation, \emph{i.e.}, $A \in \mathcal{FW}^n_{\alpha, 2}$ if and only if $P_{\alpha}AP_{\alpha}^\tr \in \mathcal{FW}^n_{\alpha,2}$.
            \end{enumerate}
    \end{corollary}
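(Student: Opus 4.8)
The plan is to prove the three statements essentially independently, using the defining decomposition~\eqref{eq:BlkFW} for statement~1, the solidity established in Proposition~\ref{prop:set-properties} for statement~2, and the alternative characterization of Theorem~\ref{theo:blockfw} for statement~3. For statement~1, I would start from a decomposition $A = \sum_{i<j} (E^{\alpha}_{ij})^\tr X_{ij} E^{\alpha}_{ij}$ with $X_{ij}\succeq 0$ and exploit the fact that $D=\text{diag}(D_1,\ldots,D_p)$ respects the block structure: since $E^{\alpha}_i D^\tr = D_i^\tr E^{\alpha}_i$, we have $E^{\alpha}_{ij}D^\tr = \text{diag}(D_i^\tr,D_j^\tr)E^{\alpha}_{ij}$. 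Substituting gives $DAD^\tr = \sum_{i<j}(E^{\alpha}_{ij})^\tr \tilde X_{ij} E^{\alpha}_{ij}$ with $\tilde X_{ij} = \text{diag}(D_i,D_j)\,X_{ij}\,\text{diag}(D_i,D_j)^\tr \succeq 0$, a congruence that preserves positive semidefiniteness; hence $DAD^\tr\in\mathcal{FW}^n_{\alpha,2}$. The converse follows by applying the same argument with the block-diagonal inverse $D^{-1}$, which exists precisely because $D$ is invertible. This step is routine.

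For statement~2, the cleanest route is to invoke Proposition~\ref{prop:set-properties}, which already guarantees that $\mathcal{FW}^n_{\alpha,2}$ is a proper, hence solid, cone. A solid convex cone is generating: fixing any interior point $X_0$, for every $X\in\mathbb{S}^n$ there is $\epsilon>0$ with $X_0+\epsilon X\in\mathcal{FW}^n_{\alpha,2}$, so $X = \tfrac{1}{\epsilon}(X_0+\epsilon X) - \tfrac{1}{\epsilon}X_0$ displays $X$ as a difference of two cone elements. Alternatively, I could give an explicit construction $A = tI_n + X$, $B = tI_n$, using Theorem~\ref{theo:blockfw}: choosing $Z_{ij}=cI_{k_i}$ with $c$ larger than the largest singular value of the off-diagonal blocks makes~\eqref{eq:blockfw2_s2} hold by a Schur-complement argument, after which taking $t$ large enough secures~\eqref{eq:blockfw2_s1}; since $I_n\in\mathcal{FW}^n_{\alpha,2}$ (take all $Z_{ij}=0$), both $A$ and $B$ lie in the cone. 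Either way the statement is immediate.

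Statement~3 is where the real work lies, and I would base it on Theorem~\ref{theo:blockfw}. Let $\pi$ be the permutation of $\{1,\ldots,p\}$ underlying $P_{\alpha}$, so that the blocks of $B:=P_{\alpha}AP_{\alpha}^\tr$ satisfy $B_{ij}=A_{\pi(i)\pi(j)}$. Given certificates $Z_{ij}$ for $A$, I would set $\tilde Z_{ij}:=Z_{\pi(i)\pi(j)}$ and check they certify $B$: the block dominance~\eqref{eq:blockfw2_s1} transports because, as $j$ runs over $\{1,\ldots,p\}\setminus\{i\}$, $\pi(j)$ runs over $\{1,\ldots,p\}\setminus\{\pi(i)\}$, whence $\sum_{j\neq i}\tilde Z_{ij}=\sum_{m\neq\pi(i)}Z_{\pi(i)m}\preceq A_{\pi(i)\pi(i)}=B_{ii}$; and each $2\times 2$ condition~\eqref{eq:blockfw2_s2} for $B$ is exactly that for the pair $(\pi(i),\pi(j))$ of $A$. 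Since $P_{\alpha}^\tr$ is again a block $\alpha$-permutation, running the argument in reverse yields the ``only if'' direction. The main obstacle is bookkeeping: I must confirm that $\pi$ induces a bijection on the unordered pairs $\{i,j\}$ so that no summand is lost or duplicated, and that swapping the two blocks within a pair is a symmetric reindexing keeping the $2\times 2$ certificate positive semidefinite. I would also flag the mild subtlety that, for non-homogeneous $\alpha$, $B$ carries the permuted partition $\{k_{\pi(1)},\ldots,k_{\pi(p)}\}$; the identity of ordered partitions holds verbatim when $\alpha$ is homogeneous, and in general the two cones coincide after this relabeling of equal-sized blocks, which is all that is needed for the later use in Theorem~\ref{theo:inclusion}.
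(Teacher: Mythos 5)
Your proposal is correct, and it is worth recording where it parts ways with the paper. For statement~1 the paper argues at the level of the certificates of Theorem~\ref{theo:blockfw}, conjugating \eqref{eq:blockfw2_s1} and \eqref{eq:blockfw2_s2} by $\mathrm{diag}(D_i,D_j)$ and setting $\hat Z_{ij}=D_iZ_{ij}D_i^\tr$, whereas you work directly with the defining decomposition \eqref{eq:BlkFW} via the intertwining identity $E^{\alpha}_{ij}D^\tr=\mathrm{diag}(D_i^\tr,D_j^\tr)E^{\alpha}_{ij}$; the two are interchangeable (Theorem~\ref{theo:blockfw} is itself proved from \eqref{eq:BlkFW}), and your version has the small advantage of not needing Theorem~\ref{theo:blockfw} at all. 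For statement~2 your second, explicit route ($A=tI_n+X$, $B=tI_n$ with $Z_{ij}=cI_{k_i}$, $c\geq\max_{i<j}\|X_{ij}\|_2$, via a Schur complement) is exactly the paper's construction ($A=X+\lambda I_n$, $B=\lambda I_n$); your first route, deducing the generating property abstractly from the solidity guaranteed by Proposition~\ref{prop:set-properties}, is genuinely different and more general---it would apply to any proper cone---at the cost of resting on the interior-point argument of that proposition rather than being self-contained. For statement~3 the paper offers only the one-line observation that \eqref{eq:blockfw2_s1}--\eqref{eq:blockfw2_s2} are independent of block $\alpha$-permutation; you supply the actual bookkeeping ($\tilde Z_{ij}=Z_{\pi(i)\pi(j)}$, bijectivity on unordered pairs, and PSD-preservation of the within-pair swap by conjugation with $\left[\begin{smallmatrix}0&I\\ I&0\end{smallmatrix}\right]$), all of which checks out. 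Your flagged subtlety is also genuine and glossed over by the paper: for non-homogeneous $\alpha$, $P_\alpha AP_\alpha^\tr$ naturally lives in $\mathcal{FW}^n_{\alpha',2}$ for the permuted partition $\alpha'=\{k_{\pi(1)},\ldots,k_{\pi(p)}\}$, and the literal statement of Corollary~\ref{prop:invariant} holds verbatim only when $\pi$ preserves the size sequence; the relabeled form is indeed what the proof of Theorem~\ref{theo:inclusion} uses, since moving the two blocks to be merged to the last positions generally permutes blocks of unequal sizes. In short: statements~1 and~2 match the paper up to cosmetic (plus one optional abstract) variations, and your statement~3 is a more careful version of the paper's argument.
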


\begin{proof}
    \emph{Statement 1:} Suppose $A \in \mathcal{FW}^n_{\alpha,2}$. By Theorem~\ref{theo:blockfw}, there exist  $Z_{ij} \in \mathbb{S}^{k_i}_+, i, j = 1, \ldots, p, i \neq j$, such that~\eqref{eq:blockfw2_s1} and~\eqref{eq:blockfw2_s2} hold. Then we have
    $$
        D_iA_{ii}D_i^\tr \succeq \sum_{j =1, j \neq i}^p D_iZ_{ij}D_i^\tr, \quad \forall i = 1, \ldots, p
        $$
        and
        $$
        \begin{aligned}
       &\begin{bmatrix} D_{i} &  \\
         & D_{j}\end{bmatrix} \begin{bmatrix} Z_{ij} & A_{ij} \\
        \star & Z_{ji}\end{bmatrix} \begin{bmatrix} D_{i} &  \\
         & D_{j}\end{bmatrix}^\tr\\
         = &\begin{bmatrix} D_{i}Z_{ij}D_{i}^\tr & D_{i}A_{ij}D_{j}^\tr \\
        \star & D_{j}Z_{ji}D_{j}^\tr\end{bmatrix} \succeq 0, \quad \forall \,1 \leq i < j \leq p.
        \end{aligned}
  $$
  Thus, setting $\hat{Z}_{ij} = D_{i}Z_{ij}D_{i}^\tr $ proves $DAD^\tr \in \mathcal{FW}^n_{\alpha,2}$. The converse  follows by observing that
  $$
    DAD^\tr \in \mathcal{FW}^n_{\alpha,2} \Rightarrow A = D^{-1}DAD^\tr (D^{-1})^\tr \in \mathcal{FW}^n_{\alpha,2}.
  $$

  \emph{Statement 2:} Given $X = [X_{ij}] \in \mathbb{S}^n$ with partition $\alpha$, we can choose $A = X + \lambda I_n$ and $B = \lambda I_n$, which satisfies
  $
    X = A - B, \, \forall \lambda \in \mathbb{R}.
  $
  Since $B$ is diagonal and the off-diagonal elements are zero, the constraints~\eqref{eq:blockfw2_s1} and~\eqref{eq:blockfw2_s2} can be naturally satisfied $\forall \lambda >0$, and thus $B \in \mathcal{FW}^n_{\alpha,2}$. Meanwhile, the diagonal elements of $A$ can be chosen large enough by considering a large $\lambda >0$, such that the diagonal elements of $Z_{ij}$ in~\eqref{eq:blockfw2_s1} are large enough to satisfy~\eqref{eq:blockfw2_s2}. From Theorem~\ref{theo:blockfw}, it is now clear that there exists a $\lambda >0 $ such that $A, B \in \mathcal{FW}^n_{\alpha,2}$.

  \emph{Statement 3:} Follows directly from the fact that~\eqref{eq:blockfw2_s1} and~\eqref{eq:blockfw2_s2} are independent of block $\alpha$-permutation. 
\end{proof}

Statement 2 of Corollary~\ref{prop:invariant} can be used to provide additional results for the difference of convex (DC) decomposition of nonconvex polynomials that was initially proposed in~\cite{ahmadi2018dc}. We will not discuss further this application, but mention that it remains to establish how $\mathcal{FW}_{\alpha,2}^n$ matrices can be used in this context. The block invariant property in Statement 3 of Corollary~\ref{prop:invariant} has been used in the proof of Theorem~\ref{theo:inclusion}. 
Finally, we note that the cone $\mathcal{FW}^n_{\alpha,2}$ is not invariant with respect to the normal permutation, unless $\alpha = \{1,1, \ldots, 1\}$. In other words,  given a nontrivial partition $\alpha$ and $A \in \mathcal{FW}^n_{\alpha,2}$, we may have
$
    PAP^\tr \notin \mathcal{FW}^n_{\alpha,2},
$
where $P$ is a standard $n \times n$ permutation matrix.

\section{Approximation quality of block factor-width-two matrices} \label{section:approximation}

For any partition $\alpha$, we have $\mathcal{FW}_{\alpha,2}^n \subseteq \mathbb{S}^n_+ \subseteq (\mathcal{FW}_{\alpha,2}^n)^*$, that is to say $\mathcal{FW}_{\alpha,2}^n$ and its dual $(\mathcal{FW}_{\alpha,2}^n)^*$ serve as inner and outer approximations of the PSD cone $\mathbb{S}^n_+$, respectively. In this section, we aim to analyze how well $\mathcal{FW}_{\alpha,2}^n$ and $(\mathcal{FW}_{\alpha,2}^n)^*$ approximate the PSD cone. We focus on two cases: 1) general dense matrices, where we present upper and lower bounds on the distance between $\mathcal{FW}_{\alpha,2}^n$ (or $(\mathcal{FW}_{\alpha,2}^n)^*$) and $\mathbb{S}^n_+$ after some normalization; 2) a class of sparse block PSD matrices, for which there is no approximation error to use our notion of block factor-width-two matrices.

\subsection{Upper and lower bounds} \label{Section:bounds}
Our results in this section are motivated by~\cite{blekherman2020sparse}, where the authors quantified the approximation quality for the PSD cone using the dual cone of factor-width-$k$ matrices $(\mathcal{FW}^n_k)^*$.
In our context, there are two cases:
\begin{itemize}
    \item For the case of $\mathcal{FW}_{\alpha,2}^n \subseteq \mathbf{S}^n_+$, we consider the matrix in $\mathbf{S}^n_+$ that is farthest from $\mathcal{FW}_{\alpha,2}^n$.
    \item For the case of $\mathbf{S}^n_+ \subseteq (\mathcal{FW}_{\alpha,2}^n)^*$, we consider the matrix in $(\mathcal{FW}_{\alpha,2}^n)^*$ that is farthest from the PSD cone $\mathbf{S}^n_+$.
\end{itemize}
The distance between a matrix $M$ and a set $\mathcal{D}$ (where $\mathcal{D} = \mathbb{S}^n_+$ or $\mathcal{FW}_{\alpha,2}^n$) is measured as
$
    \text{dist}(M,\mathcal{D}) := \inf_{N \in \mathcal{D}} \|M - N\|_F,
$
where $\|\cdot\|_F$ denotes the Frobenius norm. Similar to~\cite{blekherman2020sparse}, we consider the following normalized Frobenius distances:
\begin{itemize}
        \item The largest distance between a unit-norm matrix $M$ in $\mathbb{S}^n_+$ and the cone $\mathcal{FW}_{\alpha,2}^n$:
    $$
        \text{dist}(\mathbb{S}^n_+,\mathcal{FW}_{\alpha,2}^n) := \sup_{M \in \mathbb{S}^n_+, \|M\|_F = 1}  \text{dist}(M,\mathcal{FW}_{\alpha,2}^n).
    $$
    \item The largest distance between a unit-norm matrix $M$ in $(\mathcal{FW}_{\alpha,2}^n)^*$ and the PSD cone $\mathbb{S}^n_+$:
    $$
     \!\!\!   \text{dist}((\mathcal{FW}_{\alpha,2}^n)^*,\mathbb{S}^n_+) := \sup_{M \in (\mathcal{FW}_{\alpha,2}^n)^*, \|M\|_F = 1}  \text{dist}(M,\mathbb{S}^n_+).
    $$
\end{itemize}
We aim to characterize the bounds for $\text{dist}(\mathbb{S}^n_+,\mathcal{FW}_{\alpha,2}^n)$ and $\text{dist}((\mathcal{FW}_{\alpha,2}^n)^*,\mathbb{S}^n_+)$.

\subsubsection{Upper bound} We first show that the distance between $(\mathcal{FW}_{\alpha,2}^n)^*$ (or $\mathcal{FW}_{\alpha,2}^n$) and the PSD cone is at most $\frac{p-2}{p}$, where $p$ is the number of partitions.
\begin{proposition} \label{prop:upperbound}
        For any partition $\alpha = \{k_1,k_2,\ldots,k_p\}$, we have
    $$
        \text{dist}(\mathbb{S}^n_+,\mathcal{FW}_{\alpha,2}^n) \leq\frac{p-2}{p}, \quad  \text{dist}((\mathcal{FW}_{\alpha,2}^n)^*,\mathbb{S}^n_+) \leq \frac{p-2}{p}.
    $$
\end{proposition}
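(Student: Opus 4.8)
The plan is to prove both inequalities simultaneously by dualizing the Frobenius distance, and then to collapse the block problem onto a scalar $p\times p$ problem solved by an elementary estimate.

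First I would invoke the polar description of the distance to a closed convex cone $K$, namely $\mathrm{dist}(X,K)=\max\{\langle X,Z\rangle : Z\in K^{\circ},\ \|Z\|_F\le 1\}$, where $\langle\cdot,\cdot\rangle$ is the trace inner product and $K^{\circ}=-K^{*}$ is the polar cone. Since $(\mathbb{S}^n_+)^{\circ}=-\mathbb{S}^n_+$ and $(\mathcal{FW}_{\alpha,2}^n)^{\circ}=-(\mathcal{FW}_{\alpha,2}^n)^{*}$, both normalized distances in the statement reduce to the single symmetric bilinear quantity
$$\sup\{-\langle M,Y\rangle : M\in\mathbb{S}^n_+,\ Y\in(\mathcal{FW}_{\alpha,2}^n)^{*},\ \|M\|_F=\|Y\|_F=1\}.$$
Hence it suffices to prove the homogeneous inequality $\langle M,Y\rangle\ge -\tfrac{p-2}{p}\,\|M\|_F\|Y\|_F$ for every $M\in\mathbb{S}^n_+$ and $Y\in(\mathcal{FW}_{\alpha,2}^n)^{*}$.

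The key step is a reduction to scalars. Partitioning $M$ and $Y$ according to $\alpha$, I would set $\hat W_{ij}:=\langle M_{ij},Y_{ij}\rangle$ and collect these into a $p\times p$ symmetric matrix $\hat W$, so that $\langle M,Y\rangle=\sum_{i,j}\hat W_{ij}$. Each $2\times 2$ block submatrix $E_{ij}^{\alpha}M(E_{ij}^{\alpha})^{\tr}$ is PSD (a principal submatrix of $M\succeq 0$), and $E_{ij}^{\alpha}Y(E_{ij}^{\alpha})^{\tr}$ is PSD by Proposition~\ref{prop:set-properties}. Applying the congruence $\mathrm{diag}(\delta I,\pm\delta^{-1}I)$ to the latter, pairing with the former, and using that the trace inner product of two PSD matrices is nonnegative, I obtain $\delta^{2}\hat W_{ii}\mp 2\hat W_{ij}+\delta^{-2}\hat W_{jj}\ge 0$; minimizing over $\delta>0$ yields the block Cauchy–Schwarz bound $|\hat W_{ij}|\le\sqrt{\hat W_{ii}\hat W_{jj}}$ together with $\hat W_{ii}\ge 0$. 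Moreover $\sum_{i,j}|\hat W_{ij}|\le\sum_{i,j}\|M_{ij}\|_F\|Y_{ij}\|_F\le\|M\|_F\|Y\|_F$. Thus $\hat W$ is a $p\times p$ scalar factor-width-two-dual matrix with controlled $\ell_1$ mass, placing us exactly in the $p$-variable setting of~\cite{blekherman2020sparse}.

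It then remains to bound $\sum_{i,j}\hat W_{ij}$ from below. Writing $a:=\sum_i\hat W_{ii}\ge 0$ and $b:=\sum_{i\ne j}|\hat W_{ij}|\ge 0$, I have $\langle M,Y\rangle\ge a-b$. The normalization gives $a+b\le 1$, while $|\hat W_{ij}|\le\sqrt{\hat W_{ii}\hat W_{jj}}$ combined with $(\sum_i\sqrt{\hat W_{ii}})^{2}\le p\sum_i\hat W_{ii}$ gives $b\le(p-1)a$. Minimizing the linear objective $a-b$ over the two-variable polytope $\{a,b\ge 0,\ a+b\le 1,\ b\le(p-1)a\}$, the minimum is attained at the vertex $(a,b)=(\tfrac1p,\tfrac{p-1}{p})$ and equals $-\tfrac{p-2}{p}$, which is the desired bound.

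I expect the main obstacle to be conceptual rather than computational: recognizing that the block problem collapses onto the scalar $p\times p$ factor-width-two cone through the surrogate $\hat W$, which is precisely what makes the constant depend only on the number of blocks $p$. The only genuine computation is the block Cauchy–Schwarz inequality, where the one subtlety is a singular diagonal block $M_{ii}$ or $Y_{ii}$, handled by letting $\delta\to 0$ or $\delta\to\infty$. Finally, the constant is sharp: for the trivial partition with $M\propto\mathbf 1\mathbf 1^{\tr}$ and $Y\propto 2I-\mathbf 1\mathbf 1^{\tr}$ every inequality above holds with equality and the distance $\tfrac{p-2}{p}$ is realized.
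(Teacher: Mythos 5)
Your proof is correct, and it takes a genuinely different route from the paper's. The paper argues primally and constructively: for a unit-norm $M \in (\mathcal{FW}_{\alpha,2}^n)^*$ it assembles $\hat{M} = \sum_{i<j} (E^{\alpha}_{ij})^\tr \bigl(E^{\alpha}_{ij} M (E^{\alpha}_{ij})^\tr\bigr) E^{\alpha}_{ij} \in \mathcal{FW}_{\alpha,2}^n$, observes that this multiplies diagonal blocks by $p-1$ while leaving off-diagonal blocks unchanged, so $\|M - \tfrac{2}{p}\hat{M}\|_F = \tfrac{p-2}{p}$, and thereby bounds the stronger quantity $\text{dist}((\mathcal{FW}_{\alpha,2}^n)^*, \mathcal{FW}_{\alpha,2}^n)$, which dominates both stated distances via the sandwich $\mathcal{FW}_{\alpha,2}^n \subseteq \mathbb{S}^n_+ \subseteq (\mathcal{FW}_{\alpha,2}^n)^*$. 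You instead dualize: by the Moreau/polar description of the distance to a closed convex cone, both quantities reduce to the single bilinear inequality $\langle M, Y\rangle \geq -\tfrac{p-2}{p}\|M\|_F \|Y\|_F$ for $M \in \mathbb{S}^n_+$, $Y \in (\mathcal{FW}_{\alpha,2}^n)^*$, which you verify by collapsing onto the scalar surrogate $\hat{W}_{ij} = \langle M_{ij}, Y_{ij}\rangle$ — your congruence-and-pairing step correctly yields $\hat{W}_{ii} \geq 0$ and $|\hat{W}_{ij}| \leq \sqrt{\hat{W}_{ii}\hat{W}_{jj}}$, i.e.\ $\hat{W} \in (\mathcal{FW}^p_2)^*$, and the degenerate case $\hat{W}_{ii}=0$ is handled by your limiting argument — followed by an elementary two-variable linear program. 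The two arguments are dual faces of the same estimate: the paper exhibits an explicit near-projection, you bound the support function, and the constant agrees because your constraints $a+b \leq 1$ and $b \leq (p-1)a$ encode exactly the $(p-1)$-versus-$1$ block scaling in the paper's $\hat{M}$. What your route buys: it treats $\text{dist}(\mathbb{S}^n_+, \mathcal{FW}_{\alpha,2}^n)$ and $\text{dist}((\mathcal{FW}_{\alpha,2}^n)^*, \mathbb{S}^n_+)$ symmetrically without the intermediate cone-to-cone distance, and your extremal pair $M \propto \mathbf{1}\mathbf{1}^\tr$, $Y \propto 2I_n - \mathbf{1}\mathbf{1}^\tr$ certifies sharpness for the trivial partition, consistent with Proposition~\ref{prop:lowerbound}, whose lower bound $\frac{1}{\sqrt{4n/p^2 - 4/p + 1}}\frac{p-2}{p}$ equals $\frac{p-2}{p}$ precisely in the case $k=1$. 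What the paper's route buys: it is shorter, avoids polar-cone machinery, and produces an explicit feasible approximant inside $\mathcal{FW}_{\alpha,2}^n$. Two cosmetic points only: in the polar formula you should take the supremum over the unit ball rather than the unit sphere (these coincide here since the supremum is nonnegative), and $p \geq 2$ is implicitly assumed, as it is in the paper; neither affects correctness.
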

The proof is provided in Appendix\ref{app:C}. We note that the block factor-width-two approximation becomes exact when $p = 2$. As expected, this upper bound roughly goes from 0 to 1 as the number of partitions $p$ goes from 2 to $n$. We note that the normalized distance between $(\mathcal{FW}^n_k)^*$ and $\mathbb{S}^n_+$ has an upper bound as~\cite{blekherman2020sparse}
\begin{equation} \label{eq:upperfwk}
    \text{dist}((\mathcal{FW}_{k}^n)^*,\mathbb{S}^n_+) \leq \frac{n-k}{n+k-2}.
\end{equation}
From the upper bounds in Proposition~\ref{prop:upperbound} and~\eqref{eq:upperfwk}, it is explicitly shown that reducing the number of partitions $p$ or increasing the factor-width $k$ can improve the approximation quality for the PSD cone. We note that reducing the number of partitions is more efficient in numerical computation, since the decomposition basis in~\eqref{eq:BlkFW} is reduced, while the decomposition basis for $(\mathcal{FW}_{k}^n)^*$ is a combinatorial number ${n \choose k}$.

\subsubsection{Lower bound} We next provide a lower bound on $\text{dist}((\mathcal{FW}_{\alpha,2}^n)^*,\mathbb{S}^n_+) $ for a class of block matrices with homogeneous partition $\alpha$.
\begin{proposition} \label{prop:lowerbound}
    Given a homogeneous partition $\alpha$, we have
    $$
        \text{dist}((\mathcal{FW}_{\alpha,2}^n)^*,\mathbb{S}^n_+) \geq \frac{1}{\sqrt{\frac{4n}{p^2} - \frac{4}{p}+1}}\frac{p-2}{p}.
    $$
\end{proposition}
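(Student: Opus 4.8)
The plan is to prove the lower bound by exhibiting a single witness matrix $M$ in the dual cone $(\mathcal{FW}_{\alpha,2}^n)^*$ whose normalized Frobenius distance to $\mathbb{S}^n_+$ equals the right-hand side. Since $\text{dist}((\mathcal{FW}_{\alpha,2}^n)^*,\mathbb{S}^n_+)$ is a supremum over all unit-norm matrices in $(\mathcal{FW}_{\alpha,2}^n)^*$, any such witness certifies the bound, and the normalized distance is scale invariant so I need not rescale $M$ to unit norm. Writing $k = n/p$ for the common block size of the homogeneous partition $\alpha$ and letting $J_m = \mathbf{1}_m\mathbf{1}_m^{\tr}$ denote the $m\times m$ all-ones matrix, I would take
$$ M = 2k\,I_n - J_n. $$

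The first step is to verify $M\in(\mathcal{FW}_{\alpha,2}^n)^*$, i.e., that each pairwise block principal submatrix is PSD (Proposition~\ref{prop:set-properties}). Each diagonal block of $M$ equals $2kI_k - J_k$ and each off-diagonal block equals $-J_k$, so the submatrix indexed by any pair $\{i,j\}$ is exactly $2kI_{2k}-J_{2k}$, which is PSD because the largest eigenvalue of $J_{2k}$ is $2k$. Hence $M$ lies in the dual cone for every homogeneous $\alpha$ with $p$ blocks.

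The second step is the spectral computation. Since $J_n$ has eigenvalue $n$ (multiplicity one, eigenvector $\mathbf{1}_n$) and $0$ (multiplicity $n-1$), the matrix $M=2kI_n-J_n$ has eigenvalue $2k-n=k(2-p)$ with multiplicity one and $2k$ with multiplicity $n-1$. For $p\ge 3$ the value $2k-n$ is the unique negative eigenvalue, so by the eigenvalue-truncation characterization of the Frobenius projection onto $\mathbb{S}^n_+$ we get $\text{dist}(M,\mathbb{S}^n_+)^2=(2k-n)^2=k^2(p-2)^2$, while $\|M\|_F^2=(2k-n)^2+(n-1)(2k)^2$, which simplifies to $k^2(p^2+4n-4p)$ after substituting $n=pk$. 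Dividing the two quantities and recalling $k=n/p$ yields
$$ \frac{\text{dist}(M,\mathbb{S}^n_+)}{\|M\|_F}=\frac{k(p-2)}{k\sqrt{p^2+4n-4p}}=\frac{p-2}{\sqrt{p^2+4n-4p}}=\frac{1}{\sqrt{\frac{4n}{p^2}-\frac{4}{p}+1}}\cdot\frac{p-2}{p}, $$
which is precisely the claimed bound.

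The calculation itself is routine; the only genuinely delicate points are guessing the right witness and the two verifications, namely collapsing each pairwise block to $2kI_{2k}-J_{2k}$ and confirming that $2k-n$ is the sole negative eigenvalue so that the projection distance reduces to $|2k-n|$. Finally I would remark that the even simpler witness $(2I_p-J_p)\otimes I_k$ also lies in $(\mathcal{FW}_{\alpha,2}^n)^*$ (its pairwise blocks are $\bigl[\begin{smallmatrix} I_k & -I_k\\ -I_k & I_k\end{smallmatrix}\bigr]\succeq 0$) and attains normalized distance exactly $\frac{p-2}{p}$, which matches the upper bound of Proposition~\ref{prop:upperbound}; this indicates that for homogeneous partitions the true distance is in fact $\frac{p-2}{p}$, so the stated bound, though sufficient for the conclusion, is not tight.
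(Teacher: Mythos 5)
Your main argument is correct and is essentially the paper's own proof: your witness $M = 2k I_n - J_n$ is precisely the paper's matrix $G(\hat a,\hat b,n) = (\hat a + \hat b)I_n - \hat a \mathbf{1}\mathbf{1}^\tr$ with $\hat b = (\alpha_{\max}-1)\hat a$ and $\alpha_{\max} = 2k = 2n/p$, up to positive scaling; your membership check via the pairwise $2k\times 2k$ blocks $2kI_{2k}-J_{2k}$ is the paper's condition $b \geq (\alpha_{\max}-1)a$, and the eigenvalue computation is identical. You merely sidestep the explicit unit-norm normalization by noting that $\mathrm{dist}(M,\mathbb{S}^n_+)/\|M\|_F$ is scale-invariant, which is legitimate since both $(\mathcal{FW}_{\alpha,2}^n)^*$ and $\mathbb{S}^n_+$ are cones. (For $p=2$ the right-hand side is zero and the claim is trivial, consistent with your restriction to $p\geq 3$ when identifying $2k-n$ as the sole negative eigenvalue.)

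Your closing remark, however, goes beyond the paper and is correct, so it is worth highlighting. The matrix $W = (2I_p - J_p)\otimes I_k$ has diagonal blocks $I_k$ and off-diagonal blocks $-I_k$, so every pairwise principal submatrix is $\bigl[\begin{smallmatrix} I_k & -I_k\\ -I_k & I_k\end{smallmatrix}\bigr]\succeq 0$ and $W \in (\mathcal{FW}_{\alpha,2}^n)^*$; its spectrum is $2-p$ with multiplicity $k$ and $2$ with multiplicity $k(p-1)$, giving $\mathrm{dist}(W,\mathbb{S}^n_+) = (p-2)\sqrt{k}$ and $\|W\|_F = p\sqrt{k}$, hence normalized distance exactly $\frac{p-2}{p}$. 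Combined with Proposition~\ref{prop:upperbound}, this shows that for homogeneous partitions $\mathrm{dist}((\mathcal{FW}_{\alpha,2}^n)^*,\mathbb{S}^n_+) = \frac{p-2}{p}$ exactly, strictly improving Proposition~\ref{prop:lowerbound} (whose bound carries the extra factor $\bigl(\frac{4n}{p^2}-\frac{4}{p}+1\bigr)^{-1/2} < 1$ whenever $k>1$) and closing the gap the paper leaves open between its upper and lower bounds. The reason your second witness does better is structural: $2kI_n - J_n$ wastes norm by coupling all $p$ blocks through a rank-one perturbation whose pairwise compressions are only marginally PSD, whereas $(2I_p-J_p)\otimes I_k$ spreads the negative eigenvalue over a $k$-dimensional eigenspace, matching the scaling behavior of the upper-bound construction in Appendix~\ref{app:C}.
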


The proof is adapted from~\cite{blekherman2020sparse} and is provided in Appendix\ref{app:D} for completeness. For homogeneous partitions, the upper bound in Proposition~\ref{prop:upperbound} matches well with the lower bound in Proposition~\ref{prop:lowerbound}.
Given a homogeneous partition $\alpha$, we have $(\mathcal{FW}_{2 k}^n)^* \subseteq (\mathcal{FW}_{\alpha, 2}^n)^*$ with $p = n/k$,  and 
\begin{equation*}
        \text{dist}((\mathcal{FW}_{2 k}^n)^*,\mathbb{S}^n_+) \leq \frac{p-2}{p+2-2/k},
\end{equation*}
which shows that the distances $\text{dist}((\mathcal{FW}_{2 k}^n)^*,\mathbb{S}^n_+)$, $\text{dist}((\mathcal{FW}_{\alpha,2}^n)^*,\mathbb{S}^n_+)$ are growing increasingly close with growing $k$. Also, trivially $\text{dist}((\mathcal{FW}_{\alpha,2}^n)^*,(\mathcal{FW}_{2 k}^n)^*)\le \frac{p-2}{p}$.
Estimating a tighter bound, however, is a challenging task.
\begin{remark}
    Propositions~\ref{prop:upperbound} and~\ref{prop:lowerbound} present an explicit quantification of the approximation quality when varying the number of block partitions. Intuitively, the block sizes in a partition will affect the approximation quality as well; however, estimating approximation bounds in terms of different block sizes is challenging. We note that choosing the block sizes may be problem dependent. For example, in networked control applications, each block size may correspond to the dimension of each subsystem~\cite{zheng2017scalable}. In Section~\ref{section:sosmatrix}, we show that there exists a natural block partition for a class of polynomial optimization problems.
\end{remark}

\subsection{Sparse block factor-width-two matrices}

Here, we identify a class of sparse PSD matrices that always belongs to $\mathcal{FW}^n_{\alpha,2}$, which means there is no approximation error for this class of PSD matrices.
First, Theorem~\ref{theo:blockfw} allows us to deal with the sparsity of the matrix $A$ in an efficient way, as shown in the following result.
\begin{corollary}
    Given $A \in \mathcal{FW}^n_{\alpha,2}$, let $\mathcal{E} = \{(i,j) \mid \|A_{ij}\|_2 \neq 0\}$, then we have
    $$
        A = \sum_{(i,j) \in \mathcal{E}, i < j} (E^{\alpha}_{ij})^\tr X_{ij} E_{ij}^{\alpha}.
    $$
    \end{corollary}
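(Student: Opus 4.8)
The plan is to start from the characterization in Theorem~\ref{theo:blockfw} and then redistribute the diagonal slack so that only the block pairs indexed by $\mathcal{E}$ carry any weight. Since $A \in \mathcal{FW}^n_{\alpha,2}$, Theorem~\ref{theo:blockfw} supplies matrices $Z_{ij} \in \mathbb{S}^{k_i}_+$ satisfying~\eqref{eq:blockfw2_s1} and~\eqref{eq:blockfw2_s2}. The key observation is that whenever $(i,j)\notin\mathcal{E}$, i.e. $A_{ij}=0$, the coupling constraint~\eqref{eq:blockfw2_s2} decouples into $Z_{ij}\succeq 0$ and $Z_{ji}\succeq 0$; hence these off-support variables enter only the diagonal conditions~\eqref{eq:blockfw2_s1}. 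The content of the corollary is that the diagonal mass they carry can be reassigned to genuine edges.

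First I would set $\tilde Z_{ij}=0$ for every $(i,j)\notin\mathcal{E}$ and $\tilde Z_{ij}=Z_{ij}$ otherwise. Because $0 \preceq \tilde Z_{ij}\preceq Z_{ij}$, the diagonal condition is preserved: $A_{ii} \succeq \sum_{j\neq i} Z_{ij} \succeq \sum_{j:(i,j)\in\mathcal{E}} \tilde Z_{ij}$, so the residual $\tilde Q_{ii}:= A_{ii}-\sum_{j:(i,j)\in\mathcal{E}}\tilde Z_{ij}\succeq 0$. Writing $d_i := |\{ j : (i,j)\in\mathcal{E}\}|$ for the number of support neighbours of block $i$, I would then mimic the $\Leftarrow$ construction of Theorem~\ref{theo:blockfw}, but spread $\tilde Q_{ii}$ only over the incident support blocks: for each $(i,j)\in\mathcal{E}$ with $i<j$ set
$$X_{ij,1}=\tilde Z_{ij}+\tfrac{1}{d_i}\tilde Q_{ii},\quad X_{ij,2}=A_{ij},\quad X_{ij,3}=\tilde Z_{ji}+\tfrac{1}{d_j}\tilde Q_{jj},$$
and assemble $X_{ij}$ as in~\eqref{eq:blockfactor}. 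Each $X_{ij}\succeq 0$ because $X_{ij,1}\succeq \tilde Z_{ij}=Z_{ij}$, $X_{ij,3}\succeq Z_{ji}$, and~\eqref{eq:blockfw2_s2} holds, so adding positive semidefinite matrices to its diagonal blocks keeps it positive semidefinite. Reconstructing via~\eqref{eq:BlkFW} restricted to $\mathcal{E}$, the $(i,j)$ off-diagonal block returns $A_{ij}$ exactly (and vanishes off support), while the diagonal block $i$ returns $\sum_{j:(i,j)\in\mathcal{E}}(\tilde Z_{ij}+\tfrac1{d_i}\tilde Q_{ii}) = \sum_{j:(i,j)\in\mathcal{E}}\tilde Z_{ij}+\tilde Q_{ii}=A_{ii}$, giving the claimed support-restricted decomposition.

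The step I expect to be delicate is the diagonal bookkeeping, and in particular the implicit requirement that $d_i\geq 1$ for every block carrying a non-zero diagonal block $A_{ii}$. If some block row $i$ is isolated in the support graph (all $A_{ij}=0$, $j\neq i$) while $A_{ii}\neq 0$, then no edge term can ever touch the $(i,i)$ block and no decomposition supported on $\mathcal{E}$ can reproduce $A_{ii}$; the statement therefore tacitly assumes that every block index appears in some edge of $\mathcal{E}$, i.e. no isolated vertices in the sparsity graph, which holds in the regime of interest. Under this mild assumption the construction above is complete; alternatively, the same re-absorption can be carried out directly at the level of the definition~\eqref{eq:BlkFW} by folding the block-diagonal summands arising from non-support pairs into any incident support summand, which avoids invoking the $Z_{ij}$ reformulation altogether.
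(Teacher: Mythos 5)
Your proof is correct and follows essentially the same route as the paper's: the paper's own (one-line) argument likewise sets $Z_{ij}=Z_{ji}=0$ for pairs with $A_{ij}=0$ and then absorbs the resulting block-diagonal components $X_{ij}$ into other summands, and your explicit $\tfrac{1}{d_i}\tilde Q_{ii}$ redistribution is just the careful version of that absorption, mirroring the $\Leftarrow$ construction of Theorem~\ref{theo:blockfw} with $d_i$ in place of $p-1$. Your caveat about isolated blocks is a genuine observation rather than a flaw in your argument: as literally stated the corollary fails when some block $i$ has $A_{ii}\neq 0$ but no incident edge in $\mathcal{E}$ (e.g., $A=I_n$, for which $\mathcal{E}=\emptyset$), an implicit non-isolation assumption that the paper's proof glosses over when it asserts the block-diagonal components ``can be incorporated into other components.''
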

    \begin{proof}
        This directly follows from the proof of Theorem~\ref{theo:blockfw}. Indeed, if $A_{ij} = 0$, the corresponding $Z_{ij}$ and $Z_{ji}$ can be set to zero in~\eqref{eq:blockfw2_s1} and~\eqref{eq:blockfw2_s2}. Then, the component $E^{\alpha}_{ij}$ corresponding to $A_{ij} = 0$ can be set to zero as the variable $X_{ij}$ is block-diagonal and can be incorporated into other components.
    \end{proof}

\begin{proposition}\label{prop:PSDfw}
        Given a partition $\alpha = \{k_1, k_2, \ldots, k_p\}$ and a chordal graph $\mathcal{G}(\mathcal{V}, \mathcal{E})$ with $\mathcal{V} = \{1, \ldots, p\}$, if the largest maximal clique size is two, then we have
 $
            \mathbb{S}^n_{\alpha,+}(\mathcal{E},0) \subset \mathcal{FW}^n_{\alpha,2}. 
            $
\end{proposition}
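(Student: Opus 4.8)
The plan is to derive both inclusions directly from the chordal decomposition in Theorem~\ref{prop:chordal}, exploiting the fact that a chordal graph whose largest maximal clique has size two has only singletons and edges as its maximal cliques. This is exactly the structure encoded by the block factor-width-two decomposition~\eqref{eq:BlkFW}, so the two characterizations should essentially coincide once the clique sizes are capped at two.

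For the first inclusion $\mathbb{S}^n_{\alpha,+}(\mathcal{E},0) \subset \mathcal{FW}^n_{\alpha,2}$, I would take $Z \in \mathbb{S}^n_{\alpha,+}(\mathcal{E},0)$ and apply the decomposition~\eqref{eq:chordal}, writing $Z = \sum_{i=1}^g (E^{\alpha}_{\mathcal{C}_i})^\tr Z_i (E^{\alpha}_{\mathcal{C}_i})$ with each $Z_i \succeq 0$. Since every maximal clique $\mathcal{C}_i$ is either a single vertex $\{a\}$ or an edge $\{a,b\}$, each summand is a PSD matrix supported either on one diagonal block or on the principal submatrix indexed by two blocks. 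The edge terms already have the form of the summands $(E^{\alpha}_{ab})^\tr X_{ab} E^{\alpha}_{ab}$ in~\eqref{eq:BlkFW}, while a singleton term supported on block $a$ can be absorbed into the top-left corner of any pair term $X_{ab}$ (padding by zeros preserves positive semidefiniteness), provided $p \geq 2$. Setting $X_{ij} = 0$ for every non-edge pair then exhibits $Z$ in the form~\eqref{eq:BlkFW}, so $Z \in \mathcal{FW}^n_{\alpha,2}$.

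For the second inclusion $\mathbb{S}^n_{\alpha,+}(\mathcal{E},?) \subset (\mathcal{FW}^n_{\alpha,2})^*$, I would use the dual characterization of Proposition~\ref{prop:set-properties}, that $X \in (\mathcal{FW}^n_{\alpha,2})^*$ iff $E^{\alpha}_{ij} X (E^{\alpha}_{ij})^\tr \succeq 0$ for every pair $1 \leq i < j \leq p$. Take $X \in \mathbb{S}^n_{\alpha,+}(\mathcal{E},?)$; since this cone lies inside $\mathbb{S}^n_{\alpha}(\mathcal{E},0)$, we have $X_{ij} = 0$ whenever $(i,j) \notin \mathcal{E}$. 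The completion part of Theorem~\ref{prop:chordal} gives $E^{\alpha}_{\mathcal{C}} X (E^{\alpha}_{\mathcal{C}})^\tr \succeq 0$ on every maximal clique, hence $X_{aa} \succeq 0$ for each vertex $a$ and $\begin{bmatrix} X_{ii} & X_{ij} \\ \star & X_{jj}\end{bmatrix} \succeq 0$ for each edge $(i,j)$. It then remains to treat the non-edge pairs: there $X_{ij} = 0$, so $E^{\alpha}_{ij} X (E^{\alpha}_{ij})^\tr = \text{diag}(X_{ii}, X_{jj}) \succeq 0$. Thus all pairwise block submatrices are PSD, giving $X \in (\mathcal{FW}^n_{\alpha,2})^*$.

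The routine bookkeeping of absorbing singleton cliques in the first part and handling non-edge pairs in the second is where a little care is needed, but neither is a genuine obstacle once Theorem~\ref{prop:chordal} is available; the real content is simply the observation that maximal cliques of size at most two collapse the chordal decomposition onto precisely the pairwise form~\eqref{eq:BlkFW} and its dual. Finally, both inclusions are strict because $\mathbb{S}^n_{\alpha,+}(\mathcal{E},0)$ and $\mathbb{S}^n_{\alpha,+}(\mathcal{E},?)$ are confined to the proper subspace $\mathbb{S}^n_{\alpha}(\mathcal{E},0)$, whereas $\mathcal{FW}^n_{\alpha,2}$ and its dual are solid by Proposition~\ref{prop:set-properties}.
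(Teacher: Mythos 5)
Your proposal is correct and takes essentially the same route as the paper's own proof: apply the chordal decomposition of Theorem~\ref{prop:chordal}, observe that maximal cliques of size at most two make the basis matrices $E^{\alpha}_{\mathcal{C}_i}$ coincide with the pairwise form~\eqref{eq:BlkFW}, and handle the dual inclusion via the pairwise characterization of $(\mathcal{FW}^n_{\alpha,2})^*$ --- indeed you are more careful than the paper's terse argument in explicitly absorbing singleton cliques and checking the non-edge pairs. The only caveat is your closing strictness claim, which presumes $\mathbb{S}^n_{\alpha}(\mathcal{E},0)$ is a proper subspace: this fails in the degenerate case $p=2$ with $\mathcal{E}=\{(1,2)\}$, where all the cones equal $\mathbb{S}^n_+$, though the paper itself offers no proof of strictness, so this does not affect the substance of your argument.
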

\begin{proof}
    According to Theorem~\ref{prop:chordal}, $\forall Z \in \mathbb{S}^n_{\alpha,+}(\mathcal{E},0)$, we have
        $$ 
            Z = \sum_{i=1}^g (E^{\alpha}_{\mathcal{C}_i})^\tr Z_i (E^{\alpha}_{\mathcal{C}_i}),
        $$ 
        where  $Z_i \in \mathbb{S}^{|\mathcal{C}_i|}_+, i = 1, \ldots, g$. When the largest maximal clique size is two, then the basis matrices above belong to $E^{\alpha}_{ij}$. Thus, $Z \in \mathcal{FW}^n_{\alpha,2}$. 
\end{proof}

Using the distance notion in Section~\ref{Section:bounds}, we have that
$
    \text{dist}(\mathbb{S}^n_{\alpha,+}(\mathcal{E},0),\mathcal{FW}_{\alpha,2}^n) =0.
$
We conclude this section with Fig.~\ref{FIG:4} that illustrates two sparsity patterns where Proposition~\ref{prop:PSDfw} is applicable. Note that Proposition~\ref{prop:PSDfw} works for any partition, and this allows us to conclude that any second-order constraint (see Fig.~\ref{FIG:4}(a)) can be represented by a factor-width-two constraint $\mathcal{FW}^n_2$.

\begin{figure}
	\centering
	\subfigure[]{
		\includegraphics[scale=.5]{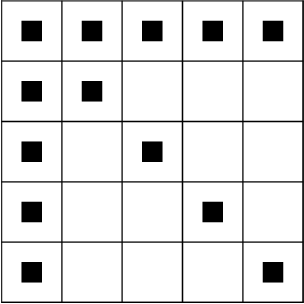}
	}
	\hspace{5mm}
	\subfigure[]{
		\includegraphics[scale=.5]{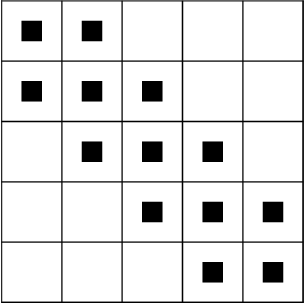}
	}
	\caption{Two sparsity patterns $\mathcal{E}$ where we have $\mathbb{S}^n_{\alpha,+}(\mathcal{E},0) \subset \mathcal{FW}^n_{\alpha,2}$ 
	for any partition $\alpha$: (a) an arrow pattern, (b) a tridiagonal pattern. Each black square represents a block entry of compatible dimension. }
	\label{FIG:4}
\end{figure}

\section{Applications in semidefinite and sum-of-squares optimization} \label{section:Application}

In this section, we discuss some applications of block factor-width-two matrices in semidefinite and sum-of-squares optimization. The optimization problems below are all standard and have a wide range of applications; see~\cite{vandenberghe1996semidefinite,blekherman2012semidefinite,bernard2009moments} for excellent surveys.

\subsection{Inner approximation of SDPs}
Given $b \in  \mathbb{R}^m$, $C \in \mathbb{S}^n$, and matrices $A_1, \ldots, A_m \in \mathbb{S}^n$, the standard primal form of a semidefinite program (SDP) is
\begin{equation} \label{eq:SDPprimal}
\begin{aligned}
    \min_{X} \quad & \langle C, X \rangle \\
    \text{subject to} \quad & \langle A_i, X \rangle  = b_i, i = 1, \ldots, m, \\
    &  X \in  \mathbb{S}^n_+,
\end{aligned}
\end{equation}
where $\langle \cdot, \cdot \rangle$ denotes the trace inner product in $\mathbb{S}^n$. Using  Theorem~\ref{theo:inclusion}, we can build an inner approximation of~\eqref{eq:SDPprimal} by replacing $\mathbb{S}^n_+$ with a block factor-width-two cone $\mathcal{FW}^n_{\alpha,2}$,
\begin{equation} \label{eq:SDPfw}
\begin{aligned}
    \min_{X} \quad & \langle C, X \rangle \\
    \text{subject to} \quad & \langle A_i, X \rangle  = b_i, i = 1, \ldots, m, \\
    &  X \in  \mathcal{FW}^n_{\alpha,2}.
\end{aligned}
\end{equation}
We call Problem~\eqref{eq:SDPfw} as a block factor-width-two cone program. Note that any $X$ that is a feasible solution to~\eqref{eq:SDPfw} is also feasible to~\eqref{eq:SDPprimal}.
This justifies the notion of \emph{inner approximation}. One can also replace $X \in \mathbb{S}^n_+$ with $X \in (\mathcal{FW}^n_{\alpha,2})^*$, leading to an outer approximation of~\eqref{eq:SDPprimal}, and the solution may not be feasible to~\eqref{eq:SDPprimal}. In the following, we are mainly interested in the inner approximation~\eqref{eq:SDPfw}.

Denoting the optimal cost\footnote{We denote optimal cost is infinity if the problem is infeasible.} of~\eqref{eq:SDPprimal} and~\eqref{eq:SDPfw} are $J^*$ and $J^{\alpha}$, respectively, it is easy to see that~\eqref{eq:SDPfw} returns an upper bound of~\eqref{eq:SDPprimal}, \emph{i.e.},
$$
    J^{\alpha} \geq J^*.
$$
When $\alpha = \{1, 1, \ldots, 1\}$, Problem~\eqref{eq:SDPfw} can be reformulated into an SOCP, as extensively used in~\cite{ahmadi2019dsos}. In this case, however, the gap between $J^{\alpha}$ and $J^*$ might be very large. As shown in Theorem~\ref{theo:inclusion}, we can create a coarser partition $\alpha \sqsubseteq \beta$ that improves the solution accuracy $J^{\alpha} \geq J^{\beta} \geq J^*$.

By the definition of $ \mathcal{FW}^n_{\alpha,2}$, Problem~\eqref{eq:SDPfw} can be equivalently rewritten into the standard SDP form
\begin{align}
    \min_{X_{jl}} \quad & \sum_{j=1}^{p-1} \sum_{l = j+1}^{p}\left\langle E^{\alpha}_{jl}C(E^{\alpha}_{jl})^\tr, X_{jl} \right\rangle  \nonumber \\
    \text{subject to} \quad & \sum_{j=1}^{p-1} \sum_{l = j+1}^{p} \left\langle E^{\alpha}_{jl}A_i(E^{\alpha}_{jl})^\tr, X_{jl} \right\rangle  = b_i, i = 1, \ldots, m, \nonumber \\
    &  X_{jl} \in  \mathbb{S}^{k_j + k_l}_{+},  1 \leq j < l \leq p, \label{eq:SDPfw_s1}
\end{align}
which is amenable for a straightforward implementation in standard SDP solvers such as SeDuMi~\cite{sturm1999using} and MOSEK~\cite{andersen2000mosek}. This program has the same number of equality constraints as~\eqref{eq:SDPprimal}, but the dimensions of PSD constraints have been reduced. This reformulation~\eqref{eq:SDPfw_s1} often offers computational speed improvements as demonstrated in our numerical experiments.

\subsection{Inner approximation of SOS optimization}
The notion of block factor-width-two matrices can also be applied in SOS optimization. A real coefficient polynomial $p(x)$ is a sum-of-squares (SOS) polynomial if it can be written as a finite sum of squared polynomials, \emph{i.e.},
\begin{equation}\label{eq:SOSdefinition}
    p(x) = \sum_{i=1}^m q^2_i(x)
\end{equation}
for some polynomial $q_i$.
We denote the set of SOS polynomials in $n$ variables and of degree $2d$ as $\SOS_{n,2d}$. It is clear that $p(x) \in \SOS_{n,2d}$ implies $p(x) \geq 0, \forall x \in \mathbb{R}^n$. We refer the interested reader to~\cite{blekherman2012semidefinite,bernard2009moments} for extensive applications of SOS polynomials.

It is well-known that $p(x) \in \SOS_{n,2d}$ if and only if there exists a PSD matrix $Q \succeq 0$ such that~\cite{blekherman2012semidefinite,bernard2009moments}
\begin{equation} \label{eq:SOSpsd}
    p(x) = v_d(t)^\tr Q v_d(x),
\end{equation}
where
\begin{equation} \label{eq:monomialbasis}
v_d(x) = [ 1,x_1,x_2,\ldots,x_n,x_1^2,x_1x_2,\ldots,x_n^d ]^\tr
\end{equation}
is the vector of monomials in $x$ of degree $d$ or less. One fundamental computational challenge in optimization over $\SOS_{n, 2d}$ is that the parameterization~\eqref{eq:SOSpsd} requires a ${n+d \choose d} \times {n+d \choose d}$ PSD matrix. This may be prohibitive even for moderate sizes $n$ and $d$. Numerous efforts have been devoted to improve the scalability of SOS optimization~\cite{weisser2018sparse,gatermann2004symmetry,waki2006sums,zheng2019sparse,zheng2018fast}. A recent notion is so-called \emph{scaled diagonally dominant sum-of-squares (\SDSOS)}~\cite{ahmadi2019dsos} that is based on $\mathcal{FW}^n_2$.

Motivated by~\cite{ahmadi2019dsos}, we define a block version of \SDSOS~based on $\mathcal{FW}^n_{\alpha,2}$, which offers an improved inner approximation of $\SOS_{n,2d}$.
\begin{definition}
    Given a partition $\alpha = \{k_1, k_2, \ldots, k_g\}$ with $\sum_{i=1}^g k_i = {n+d \choose d}$, we call a polynomial $p(x)$ in $n$ variables and of degree $2d$ as $\alpha$-$\SDSOS_{n,2d}$, if it can be represented as
    \begin{equation} \label{eq:sdsos}
        p(x) = \sum_{1 \leq i < j \leq g}\left(\sum_{t=1}^{k_i + k_j} \left(f_{ij,t}^\tr m_{ij}(x)\right)^2\right),
    \end{equation}
    where $f_{ij,t} \in \mathbb{R}^{k_i + k_j}$ and $m_{ij}(x)$ is a subvector of length $k_i+k_j$ of the monomial basis vector~\eqref{eq:monomialbasis}.
\end{definition}

\begin{remark}
Definition~\eqref{eq:sdsos} is more structured compared to the SOS definition~\eqref{eq:SOSdefinition}, thus it is clear that $\alpha$-$\SDSOS_{n,2d} \subseteq \SOS_{n,2d}$. We note that for a trivial partition $\alpha = \{1,1,\ldots, 1\}$, our notion of  $\alpha$-$\SDSOS_{n,2d}$ is reduced to the normal \SDSOS \; in~\cite{ahmadi2019dsos}. For a partition $\alpha = \{k_1, k_2\}$ with $k_1 + k_2 = {n+d \choose d}$, it is not difficult to see
$\alpha$-$\SDSOS_{n,2d} = \SOS_{n,2d}$, by invoking a Cholesky factorization of the PSD matrix $Q$ in~\eqref{eq:SOSpsd}.

\end{remark}

The following theorem connects our $\alpha$-$\SDSOS_{n,2d}$ polynomials with block factor-width-two matrices.

\begin{theorem} \label{theo:sdsos}
    A polynomial $p(x) \in \alpha$-$\SDSOS_{n,2d}$ if and only if it admits a representation as $p(x) = v_d^\tr(x)Qv_d(x)$, where $Q \in \mathcal{FW}^N_{\alpha, 2}$, where $N = {n+d \choose d}$.
\end{theorem}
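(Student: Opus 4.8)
The plan is to prove both directions through a single elementary bridge: the length-$(k_i+k_j)$ subvector $m_{ij}(x)$ appearing in the $\alpha$-$\SDSOS$ definition is exactly the block selection of the monomial vector under the partition $\alpha$ of the basis $v_d(x)$, namely $m_{ij}(x) = E^{\alpha}_{ij} v_d(x)$, with $E^{\alpha}_{ij}$ the selector from~\eqref{eq:blockbasis}. First I would record this identification explicitly, since it is the only place where the block structure of $\alpha$ enters on both sides of the claimed equivalence. Once it is in place, a single PSD factorization is all that links the quadratic form $v_d^\tr Q v_d$ to the sum of squares in~\eqref{eq:sdsos}.

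For the ($\Leftarrow$) direction, suppose $p(x) = v_d^\tr(x) Q v_d(x)$ with $Q \in \mathcal{FW}^N_{\alpha,2}$. By definition write $Q = \sum_{1\le i<j\le g} (E^{\alpha}_{ij})^\tr X_{ij} E^{\alpha}_{ij}$ with each $X_{ij}\in\mathbb{S}^{k_i+k_j}_+$. Substituting and using the identification above gives
$$ p(x) = \sum_{1\le i<j\le g} \bigl(E^{\alpha}_{ij} v_d(x)\bigr)^\tr X_{ij} \bigl(E^{\alpha}_{ij} v_d(x)\bigr) = \sum_{1\le i<j\le g} m_{ij}(x)^\tr X_{ij}\, m_{ij}(x). $$
I would then factor each $X_{ij}\succeq 0$ as $X_{ij} = \sum_{t=1}^{k_i+k_j} f_{ij,t} f_{ij,t}^\tr$ (for instance via an eigendecomposition, which needs at most $k_i+k_j$ terms), so that $m_{ij}^\tr X_{ij} m_{ij} = \sum_{t} (f_{ij,t}^\tr m_{ij})^2$. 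This reproduces~\eqref{eq:sdsos}, hence $p \in \alpha$-$\SDSOS_{n,2d}$.

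For the ($\Rightarrow$) direction I would run the same computation in reverse. Given~\eqref{eq:sdsos}, set $X_{ij} := \sum_{t=1}^{k_i+k_j} f_{ij,t} f_{ij,t}^\tr$, which is PSD as a sum of rank-one PSD matrices, and observe $\sum_{t} (f_{ij,t}^\tr m_{ij})^2 = m_{ij}^\tr X_{ij} m_{ij} = v_d^\tr (E^{\alpha}_{ij})^\tr X_{ij} E^{\alpha}_{ij} v_d$. Summing over $i<j$ and setting $Q := \sum_{1\le i<j\le g} (E^{\alpha}_{ij})^\tr X_{ij} E^{\alpha}_{ij}$ yields $p(x) = v_d^\tr(x) Q v_d(x)$, where $Q$ is symmetric and, by the very shape of this sum, lies in $\mathcal{FW}^N_{\alpha,2}$.

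I do not expect a substantial obstacle here: both directions are constructive and reduce to matching a quadratic form against its Gram representation. The one point deserving care is that each equality is an identity of polynomials in $x$ rather than of coefficient vectors — the monomials in $v_d(x)$ satisfy relations, so the Gram matrix $Q$ is not unique — but this non-uniqueness never bites, because in each direction the equality $v_d^\tr Q v_d = p$ is produced as a genuine polynomial identity directly from the chosen factorization. I would only flag that the number of squares per block pair, $k_i+k_j$, is exactly the size of $X_{ij}$, so the eigendecomposition supplies precisely enough rank-one terms and the two definitions align without slack.
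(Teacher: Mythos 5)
Your proof is correct and follows essentially the same route as the paper's: both directions hinge on the identification $m_{ij}(x) = E^{\alpha}_{ij} v_d(x)$ and on factoring each PSD block $X_{ij}$ into $k_i+k_j$ rank-one terms (the paper uses a Cholesky factorization $Q_{ij} = F_{ij}F_{ij}^\tr$, which is the same device as your eigendecomposition) to pass between the quadratic form $v_d^\tr Q v_d$ and the sum of squares in~\eqref{eq:sdsos}. Your added remark on Gram-matrix non-uniqueness is a sensible precaution but, as you note, never interferes, since each direction produces the polynomial identity constructively.
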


\begin{proof}
    $\Leftarrow:$ If $p(x)$ admits a presentation
    $$
      p(x) = v_d^\tr(x)Qv_d(x),
    $$
    where $Q \in \mathcal{FW}^N_{\alpha, 2}$, then we have
    $$
        Q = \sum_{1 \leq i < j \leq g} (E^{\alpha}_{ij})^{\tr} Q_{ij}E^{\alpha}_{ij},
        $$
    for some $Q_{ij} \in \mathbb{S}^{k_i + k_j}_+$. Using a Cholesky factorization $Q_{ij} = F_{ij} F^\tr_{ij}$, we have
    $$
    \begin{aligned}
        p(x) &= v^\tr_d(x) \left( \sum_{1 \leq i < j \leq g} (E^{\alpha}_{ij})^{\tr} Q_{ij}E^{\alpha}_{ij},
         \right)v_d(x) \\
         & = \sum_{1 \leq i < j \leq g} \left( \left(E^{\alpha}_{ij}v_d(x)\right)^\tr Q_{ij}\left(E^{\alpha}_{ij}v_d(x)\right)\right) \\
         &=\sum_{1 \leq i < j \leq g} \left( \left(F_{ij}^\tr E^{\alpha}_{ij}v_d(x)\right)^\tr \left(F_{ij}^\tr E^{\alpha}_{ij}v_d(x)\right)\right).
    \end{aligned}
        $$
        By denoting $m_{ij}(x) = E^{\alpha}_{ij}v_d(x)$, we arrive at the conclusion that $p(x) \in \alpha$-$\SDSOS_{n,2d}$.

        $\Rightarrow:$ Now suppose $p(x) \in \alpha$-$\SDSOS_{n,2d}$. By definition, we have
        $$
              p(x) = \sum_{1 \leq i < j \leq g}\left(\sum_{t=1}^{k_i + k_j} \left(f_{ij,t}^\tr m_{ij}(x)\right)^2\right),
        $$
        We can construct
        $$
            F_{ij} = \begin{bmatrix} f_{ij, 1} & f_{ij, 2} & \ldots & f_{ij, k_i + k_j} \end{bmatrix},
        $$
        leading to a $Q_{ij} = F_{ij}F_{ij}^\tr \in \mathbb{S}^{k_i + k_j}_+$. And $Q \in \mathcal{FW}^N_{\alpha,2}$ is constructed accrodingly.
\end{proof}

Similar to Theorem~\ref{theo:inclusion}, we can build a hierarchy of inner approximations of $\SOS_{n,2d}$. This is stated in the following corollary.
\begin{corollary}\label{coro:sdsosinclusion}
        For polynomials $p(x)$ in $n$ variables and of degree $2d$, we consider three partitions $\alpha = \{k_1, \ldots, k_g\}, \beta = \{l_1, \ldots, l_h\}$ and $\gamma = \{N_1,N_2\}$, where $\sum_{i=1}^g k_i = \sum_{i=1}^h l_i =N_1 + N_2 = N = {n+d \choose d}$ and $\alpha  \sqsubseteq \beta$. Then, we have the following inclusion:
    $$
    \begin{aligned}
        \SDSOS_{n,2d} = &\mathbf{1}\text{-}\SDSOS_{n,2d} \subseteq  \alpha\text{-}\SDSOS_{n,2d} \\ &\subseteq  \beta\text{-}\SDSOS_{n,2d} \subseteq  \gamma\text{-}\SDSOS_{n,2d} = \SOS_{n, 2d},
    \end{aligned}
        $$
    where $\mathbf{1} = \{1, \ldots, 1\}$ denotes the trivial partition.
\end{corollary}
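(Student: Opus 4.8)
The plan is to lift the entire claim from the level of polynomials to the level of Gram matrices, where it becomes a direct consequence of Theorem~\ref{theo:inclusion}. The bridge is Theorem~\ref{theo:sdsos}: for any partition $\delta$ of $N = {n+d \choose d}$, a polynomial $p(x)$ lies in $\delta\text{-}\SDSOS_{n,2d}$ if and only if it admits a Gram representation $p(x) = v_d^\tr(x) Q v_d(x)$ with $Q \in \mathcal{FW}^N_{\delta,2}$. First I would instantiate Theorem~\ref{theo:inclusion} in ambient dimension $N$ with the three partitions $\alpha,\beta,\gamma$ (which by hypothesis satisfy $\alpha \sqsubseteq \beta$ and $\gamma = \{N_1, N_2\}$), obtaining the nested matrix cones
$$
\mathcal{FW}^N_{\mathbf{1},2} \subseteq \mathcal{FW}^N_{\alpha,2} \subseteq \mathcal{FW}^N_{\beta,2} \subseteq \mathcal{FW}^N_{\gamma,2} = \mathbb{S}^N_+.
$$

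Next I would transfer each inclusion back to polynomials. For the step $\alpha\text{-}\SDSOS_{n,2d} \subseteq \beta\text{-}\SDSOS_{n,2d}$, take any $p \in \alpha\text{-}\SDSOS_{n,2d}$; by Theorem~\ref{theo:sdsos} there is a witness $Q \in \mathcal{FW}^N_{\alpha,2}$ with $p = v_d^\tr Q v_d$. Since $\mathcal{FW}^N_{\alpha,2} \subseteq \mathcal{FW}^N_{\beta,2}$, the very same $Q$ certifies $p \in \beta\text{-}\SDSOS_{n,2d}$ through the converse direction of Theorem~\ref{theo:sdsos}. The inclusions $\mathbf{1}\text{-}\SDSOS_{n,2d} \subseteq \alpha\text{-}\SDSOS_{n,2d}$ and $\beta\text{-}\SDSOS_{n,2d} \subseteq \gamma\text{-}\SDSOS_{n,2d}$ follow identically from the remaining links in the matrix chain.

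It remains to identify the two endpoints. The equality $\SDSOS_{n,2d} = \mathbf{1}\text{-}\SDSOS_{n,2d}$ is exactly the observation recorded in the Remark following the definition of $\alpha\text{-}\SDSOS_{n,2d}$: for the trivial partition the representation~\eqref{eq:sdsos} collapses to the standard \SDSOS\ parameterization of~\cite{ahmadi2019dsos}. For the other endpoint, Theorem~\ref{theo:inclusion} gives $\mathcal{FW}^N_{\gamma,2} = \mathbb{S}^N_+$ because $\gamma$ has only two blocks; combining this with Theorem~\ref{theo:sdsos} and the classical Gram characterization~\eqref{eq:SOSpsd} (that $p \in \SOS_{n,2d}$ if and only if $p = v_d^\tr Q v_d$ for some $Q \succeq 0$) yields $\gamma\text{-}\SDSOS_{n,2d} = \SOS_{n,2d}$.

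The only point that deserves care---and it is the conceptual crux rather than a genuine obstacle---is that a polynomial generally admits many Gram matrices, so one must be sure each inclusion in the chain is realized without re-selecting the representation. This is automatic here: Theorem~\ref{theo:sdsos} is an existential statement, so a witness $Q$ living in the smaller cone $\mathcal{FW}^N_{\alpha,2}$ is a fortiori a witness in the larger cone $\mathcal{FW}^N_{\beta,2}$. Consequently the corollary reduces entirely to the matrix-level hierarchy of Theorem~\ref{theo:inclusion}, and no further computation is required.
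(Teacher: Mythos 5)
Your proof is correct and follows essentially the same route as the paper, whose entire argument is the one-line observation that the corollary ``directly follows by combining Theorems~\ref{theo:inclusion} and~\ref{theo:sdsos}.'' You have simply spelled out the details---instantiating Theorem~\ref{theo:inclusion} in ambient dimension $N$, transferring each inclusion via the existential Gram characterization of Theorem~\ref{theo:sdsos}, and identifying the endpoints---all of which is exactly what the paper's proof implicitly relies on.
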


\begin{proof}
    The proof directly follows by combining Theorems~\ref{theo:inclusion} and~\ref{theo:sdsos}.
\end{proof}

We also have the following corollary corresponding to Corollary~\ref{prop:invariant}.
\begin{corollary}
    For any polynomial $p(x)$ in $n$ variables and of degree $2d$, there exist $q(x), h(x) \in \alpha$-$\SDSOS_{n,2d}$ such that
    $$
        p(x) = q(x) - h(x).
    $$
\end{corollary}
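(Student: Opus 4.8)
The plan is to lift the difference-of-cones property for matrices (Statement 2 of Corollary~\ref{prop:invariant}) up to the level of polynomials, using Theorem~\ref{theo:sdsos} as the bridge between $\alpha$-$\SDSOS_{n,2d}$ polynomials and the cone $\mathcal{FW}^N_{\alpha,2}$ with $N = {n+d \choose d}$. The whole argument is a short chain of three steps, and no hard analysis is needed.

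First I would write $p(x)$ in Gram form $p(x) = v_d(x)^\tr M v_d(x)$ for some \emph{symmetric} (but not necessarily PSD) matrix $M \in \mathbb{S}^N$. Such an $M$ always exists for any polynomial of degree at most $2d$, because the pairwise products of the entries of the monomial basis $v_d(x)$ in~\eqref{eq:monomialbasis} range over all monomials of degree $\leq 2d$; matching coefficients determines a symmetric $M$ (generally non-unique, but a single choice is all that is required). Next, I would apply Statement 2 of Corollary~\ref{prop:invariant} to this matrix $M$ with the given partition $\alpha$, which produces $A, B \in \mathcal{FW}^N_{\alpha,2}$ satisfying $M = A - B$.

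Finally, I would set $q(x) = v_d(x)^\tr A v_d(x)$ and $h(x) = v_d(x)^\tr B v_d(x)$. By Theorem~\ref{theo:sdsos}, the memberships $A, B \in \mathcal{FW}^N_{\alpha,2}$ are equivalent to $q, h \in \alpha$-$\SDSOS_{n,2d}$, and the linearity of the map $M \mapsto v_d(x)^\tr M v_d(x)$ gives
$$
    p(x) = v_d(x)^\tr (A - B) v_d(x) = q(x) - h(x),
$$
as desired.

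There is no genuine obstacle here: the only step requiring a moment's care is the first, namely confirming that every degree-$2d$ polynomial admits some symmetric Gram matrix, which is standard in the SOS literature. The substantive content is entirely inherited from the cone-theoretic fact that $\mathcal{FW}^N_{\alpha,2}$ is solid and spans $\mathbb{S}^N$ (guaranteed by Proposition~\ref{prop:set-properties} and made explicit in Statement 2 of Corollary~\ref{prop:invariant}), so the corollary follows immediately by transporting that fact through Theorem~\ref{theo:sdsos}.
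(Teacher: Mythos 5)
Your proposal is correct and is exactly the argument the paper intends: the corollary is stated there without an explicit proof, described only as ``corresponding to Corollary~\ref{prop:invariant},'' and the intended route is precisely your chain --- pick any symmetric Gram matrix $M$ with $p(x) = v_d(x)^\tr M v_d(x)$, split $M = A - B$ with $A, B \in \mathcal{FW}^N_{\alpha,2}$ via Statement 2 of Corollary~\ref{prop:invariant} (e.g.\ $A = M + \lambda I_N$, $B = \lambda I_N$ for $\lambda$ large), and transfer membership back to polynomials through Theorem~\ref{theo:sdsos} and linearity of $M \mapsto v_d(x)^\tr M v_d(x)$. You also correctly flag and dispose of the one point the paper leaves tacit, namely that every polynomial of degree at most $2d$ admits some symmetric Gram matrix.
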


Given {$p_0(x),\,\ldots,\,p_t(x)$} in $n$ variables and of degree $2d$, $w \in \mathbb{R}^t$, and a partition $\alpha = \{k_1, k_2, \ldots, k_g\}$, we define the following problem,
\begin{equation}
\label{E:generalSOS}
    \begin{aligned}
        \min_{u}\quad & w^\tr u \\[-1ex]
        \text{subject to} \quad  & p_0(x) + \sum_{i=1}^t u_ip_i(x) \geq 0, \forall x \in \mathbb{R}^n,
    \end{aligned}
\end{equation}
where $u \in \mathbb{R}^t$ is the decision variable. Replacing the non-negativity in~\eqref{E:generalSOS} with a constraint of $\SOS_{n,2d}$, $\alpha\text{-}\SDSOS_{n,2d}$, or $\SDSOS_{n,2d}$ leads to an SDP, block factor-width cone program, or SOCP, respectively. The constraint $\SOS_{n,2d}$ provides the best solution quality but requires the most computation; the constraint $\SDSOS_{n,2d}$ demands the least computational resources 
but it may be too restrictive and leads to an infeasible restriction. Instead, the constraint $\alpha\text{-}\SDSOS_{n,2d}$ can balance the computational speed and solution quality by varying the partition $\alpha$. This feature is confirmed by our numerical experiments in Section~\ref{section:Experiments}.

\subsection{Inner approximation of PSD polynomial matrices} \label{section:sosmatrix}

The partition $\alpha$ offers flexibility to approximate both $\mathbb{S}^n_+$ and $\SOS_{n,2d}$, as demonstrated in Theorem~\ref{theo:inclusion} and Corollary~\ref{coro:sdsosinclusion}. Choosing an appropriate partition might be problem dependent. Here, we show that for approximating PSD polynomial matrices, there exists a natural partition.

An $r \times r$ polynomial matrix $P(x)$ in $n$ variables and of degree $2d$ is PSD if
$$
    P(x) = \begin{bmatrix}
    p_{11}(x) & \ldots & p_{1r}(x)\\
    \vdots & \ddots & \vdots\\
    p_{r1}(x) & \ldots & p_{rr}(x)\\
    \end{bmatrix} \succeq 0, \forall x \in \mathbb{R}^n.
$$
Testing whether $P(x)$ is PSD is in general intractable, and a standard technique is the SOS relaxation~\cite{gatermann2004symmetry}:  $P(x)$ is called an SOS matrix if there exists another polynomial matrix of dimension $s \times r$ such that $P(x) = M(x)^\tr M(x)$.
Under this definition, it is not difficult to prove that $P(x)$ is SOS if and only if
$
y^\tr P(x) y$ is SOS in $[x;y]$; see e.g.,\cite[Proposition 2]{zheng2019sparse}.
Then, in principle, the hierarchy of inner approximations in Corollary~\ref{coro:sdsosinclusion} can be used. For this particular application of SOS matrices, we show that there exists a natural partition $\alpha$.

Our idea is to use a standard factor-width-two decomposition on $P(x)$, assuming
$$
    P(x) = \sum_{1 \leq i < j \leq r} E^\tr_{ij} P_{ij}(x)E_{ij}, \quad P_{ij}(x) \succeq 0, \forall x \in \mathbb{R}^n,
$$
where $P_{ij}(x)$ are $2 \times 2$ polynomial matrices. Then, we apply the SOS relaxation for each $P_{ij}(x)$, \emph{i.e.},
\begin{equation}\label{eq:SOSmatrices}
    P(x) = \sum_{1 \leq i < j \leq r} E^\tr_{ij} P_{ij}(x)E_{ij},\quad  P_{ij}(x) \;\; \text{is \SOS}.
\end{equation}
It is easy to see~\eqref{eq:SOSmatrices} is a sufficient condition for $P(x) \succeq 0$. The choice~\eqref{eq:SOSmatrices} indeed implies a special partition $\alpha$ in the Gram matrix representation of SOS matrices.

\begin{proposition} \label{prop:SOSmatrices}
        Given an $r \times r$ polynomial matrix in $n$ variables and of degree $2d$, $P(x)$ admits a decomposition~\eqref{eq:SOSmatrices} if and only if we have
        \begin{equation} \label{eq:SOSmatricesFW}
            P(x) = \left(I_r \otimes v_d(x)\right)^\tr Q \left(I_r \otimes v_d(x)\right),
        \end{equation}
        where $Q \in \mathcal{FW}^{rN}_{\alpha,2}$ with a homogeneous partition
        $$\alpha = \{\underbrace{N,N,\ldots, N}_{r}\}, \quad N = {n+d \choose d}.$$
\end{proposition}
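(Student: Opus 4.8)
The plan is to set up a clean dictionary between the two representations via a single Kronecker identity, and then verify each direction by routine substitution.

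First I would record the key structural fact about the selection matrices for the homogeneous partition $\alpha = \{N, \ldots, N\}$ (with $r$ blocks of size $N$). Since $E^{\alpha}_i$ extracts the $i$-th block of size $N$ from a space of dimension $rN$, it can be written as $E^{\alpha}_i = e_i^\tr \otimes I_N$, where $e_i \in \mathbb{R}^r$ is the $i$-th standard unit vector. Stacking the $i$-th and $j$-th selectors then gives $E^{\alpha}_{ij} = E_{ij} \otimes I_N$, where $E_{ij} \in \mathbb{R}^{2 \times r}$ is the trivial-partition selector picking coordinates $i$ and $j$. Applying the mixed-product property of the Kronecker product yields the bridge identity
$$
E^{\alpha}_{ij}\,(I_r \otimes v_d(x)) = (E_{ij} \otimes I_N)(I_r \otimes v_d(x)) = E_{ij} \otimes v_d(x) = (I_2 \otimes v_d(x))\,E_{ij},
$$
which is what connects the block basis $E^{\alpha}_{ij}$ used for $Q$ to the scalar basis $E_{ij}$ used in~\eqref{eq:SOSmatrices}.

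Second, I would invoke the standard SOS-matrix parameterization: a $2 \times 2$ polynomial matrix $P_{ij}(x)$ of degree $2d$ is SOS if and only if there exists $Q_{ij} \in \mathbb{S}^{2N}_+$ with $P_{ij}(x) = (I_2 \otimes v_d(x))^\tr Q_{ij}\,(I_2 \otimes v_d(x))$; this follows by expressing $[y_i;y_j]^\tr P_{ij}(x)[y_i;y_j]$ as an SOS polynomial in the variables $[x;y_i;y_j]$ and reading off its Gram matrix in the basis $[y_i;y_j]\otimes v_d(x)$, exactly as in the scalar statement of Theorem~\ref{theo:sdsos}. With these two ingredients, both directions reduce to pushing the monomial map through the decompositions. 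For the ($\Rightarrow$) direction, given~\eqref{eq:SOSmatrices} I would pick a Gram matrix $Q_{ij} \succeq 0$ for each SOS block, set $Q = \sum_{i<j}(E^{\alpha}_{ij})^\tr Q_{ij} E^{\alpha}_{ij}$, which lies in $\mathcal{FW}^{rN}_{\alpha,2}$ by definition, and use the bridge identity to verify $(I_r\otimes v_d(x))^\tr Q\,(I_r\otimes v_d(x)) = \sum_{i<j} E_{ij}^\tr P_{ij}(x) E_{ij} = P(x)$, establishing~\eqref{eq:SOSmatricesFW}. For the ($\Leftarrow$) direction, given $Q \in \mathcal{FW}^{rN}_{\alpha,2}$ I would expand $Q = \sum_{i<j}(E^{\alpha}_{ij})^\tr Q_{ij} E^{\alpha}_{ij}$ with $Q_{ij}\in\mathbb{S}^{2N}_+$, apply the same identity, and define $P_{ij}(x) := (I_2\otimes v_d(x))^\tr Q_{ij}\,(I_2\otimes v_d(x))$, which is SOS because $Q_{ij}\succeq 0$; this reproduces~\eqref{eq:SOSmatrices}.

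The only genuinely delicate point is the second ingredient: ensuring that SOS-ness of the $2 \times 2$ matrix $P_{ij}(x)$ corresponds \emph{exactly} to a PSD Gram matrix in the basis $I_2 \otimes v_d(x)$, with the \emph{same} monomial vector $v_d$ used for $P(x)$, so that no larger basis is needed and no $y$-quadratic cross terms of the wrong degree appear. I expect this to be resolved by the homogeneity of $[y_i;y_j]^\tr P_{ij}(x)[y_i;y_j]$ in the $y$-variables, which forces every SOS representation to use monomials linear in $y$, i.e.\ precisely the basis $[y_i;y_j]\otimes v_d(x)$ of dimension $2N$. Once this is pinned down, the remainder is bookkeeping with the Kronecker identity.
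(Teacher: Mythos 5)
Your proposal is correct and takes essentially the same route as the paper's proof: both rest on the identity $E^{\alpha}_{ij} = E_{ij} \otimes I_N$ for the homogeneous partition (equivalently your mixed-product bridge $E^{\alpha}_{ij}\,(I_r \otimes v_d(x)) = (I_2 \otimes v_d(x))\,E_{ij}$) and then verify both directions by substituting the block decomposition of $Q$ into~\eqref{eq:SOSmatricesFW} to recover $P(x) = \sum_{i<j} E_{ij}^\tr P_{ij}(x) E_{ij}$ with $P_{ij}(x) = (I_2 \otimes v_d(x))^\tr Q_{ij} (I_2 \otimes v_d(x))$. The one point where you go beyond the paper --- which disposes of the ($\Rightarrow$) direction with ``reversing the argument'' --- is your explicit justification, via homogeneity in the $y$-variables, that every $2 \times 2$ SOS block admits a Gram matrix in exactly the basis $I_2 \otimes v_d(x)$ of dimension $2N$; this is a correct and welcome tightening of a detail the paper leaves implicit, not a different method.
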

\begin{proof}
    $\Leftarrow: $ Suppose we have~\eqref{eq:SOSmatricesFW}. Then, we have
    $$
        Q = \sum_{1 \leq i < j \leq r} (E^{\alpha}_{ij})^\tr Q_{ij} E^{\alpha}_{ij}, \quad Q_{ij} \in \mathbb{S}^{2N}_{+}.
    $$
    Considering the definitions of $\alpha$ and $E^{\alpha}_{ij}$, we have
    $$
        E^{\alpha}_{ij} = E_{ij} \otimes I_N.
    $$
    Now, we have
    \begin{equation*}
    \begin{aligned}
        P(x) &= \left(I_r \otimes v_d(x)\right)^\tr \left(\sum_{1 \leq i < j \leq r}  (E^{\alpha}_{ij})^\tr Q_{ij} E^{\alpha}_{ij}\right) \left(I_r \otimes v_d(x)\right)\\
        &= \sum_{1\leq i<j\leq r} E_{ij}^\tr \left( (I_2 \otimes v_d(x))^\tr Q_{ij}(I_2 \otimes v_d(x))\right) E_{ij}\\
        & = \sum_{1 \leq i < j \leq r} E_{ij}^\tr P_{ij}(x) E_{ij},
    \end{aligned}
   \end{equation*}
    where
    $
    P_{ij}(x) = (I_2 \otimes v_d(x))^\tr Q_{ij}(I_2 \otimes v_d(x))
    $ is a $2 \times 2$ SOS matrix.

     $\Rightarrow:$ This direction is similar by just reversing the argument above.
\end{proof}

\begin{remark}
    When a polynomial matrix $P(x)$ has a large matrix dimension $r$ and each polynomial entry has small number $n$ and $d$, Proposition~\ref{prop:SOSmatrices} will lead to better computational efficiency, but the approximation quality may be worse. In addition, similar to the case of sparse PSD matrices in Proposition~\ref{prop:chordal}, if an SOS matrix $P(x)$ has chordal sparsity, a similar decomposition is guaranteed under a mild condition; see~\cite{zheng2019sparse,zheng2018decomposition} for details.
\end{remark}

\section{Numerical experiments} \label{section:Experiments}

To demonstrate the numerical properties of block factor-width-two matrices, we first present two explicit counterexamples, and then consider a set of polynomial optimization problems. Finally, the problem of estimating a region of attraction for polynomial dynamical systems is considered.
In our experiments, we used YALMIP~\cite{lofberg2004yalmip} to reformulate the polynomial optimization problems into standard SDPs, and then replaced the PSD cone with the cone $\mathcal{FW}^n_{\alpha,2}$. All SDP instances were solved by MOSEK\cite{andersen2000mosek}\footnote{Code is available at \url{https://github.com/soc-ucsd/SDPfw}.}.

\subsection{Two explicit counterexamples}

{Fig.~\ref{FIG:3}} has already shown that our notion of $\mathcal{FW}^n_{\alpha,2}$ enriches the cone of $\mathcal{FW}^n_{2}$ or $\SDD_n$ for the approximation of the PSD cone.  Here is an explicit example,
$$
A = \begin{bmatrix}
22   & -4  &  -3   & -7  &  14   & 18 \\
    -4&    15  &  -1 &  -13 &   -8   & -9 \\
    -3 &   -1  &  29  &   2  &   4  & -21 \\
    -7  & -13  &   2  &  27  &   4 &   3 \\
    14   & -8  &   4  &   4  &  15  &  12 \\
    18   & -9  & -21   &  3   & 12  &  37
\end{bmatrix},
$$
for which we have $A \in \mathbb{S}^6_+, A \in \mathcal{FW}^6_{\beta,2}$ with $\beta = \{2,2,2\}$ but $A \notin \mathcal{FW}^6_2$. Note that $A$ is not SDD either.

In the context of SOS polynomials, the $\alpha$-\SDSOS~is a strictly better inner approximation for SOS polynomials compared to the standard \SDSOS~in~\cite{ahmadi2019dsos}.  For example, consider the polynomial matrix
\begin{equation*}
        P(x) = 	\begin{bmatrix}
	4 x^2 + 9 y^2  &  x+y                 & x+y \\
	x+y                  & 9 x^2 + 4 y^2  & x+y \\
	x+y                  & x+y                  & x^2 + 25 y^2
	\end{bmatrix}.
\end{equation*}
Computing a certificate for
$$P(x) - \frac{63}{200}I_3 \succeq 0, \forall x, y \in \mathbb{R},$$
using the standard \SDSOS~approach leads to an infeasible SOCP, while the $\alpha$-\SDSOS~in~\eqref{eq:SOSmatricesFW} is feasible when the partition $\alpha$ is chosen as in Proposition~\ref{prop:SOSmatrices}, i.e., $\alpha = \{3,3,3\}$.

\subsection{Polynomial optimization} \label{section:pop}
    Here, we consider the following polynomial optimization problem:
\begin{equation} \label{Eq:ex1}
\begin{aligned}
\min_{\gamma} \quad & \gamma \\
\text{subject to} \quad & p(x) + \gamma \geq 0,\,\,\, \forall x\in \mathbb{R}^n,
\end{aligned}
\end{equation}
where $q(x)$ is a modified Broyden tridiagonal polynomial
\begin{multline*}
q(x) = ((3-2 x_1) x_1-2 x_2+1)^2 \\
+\sum\limits_{i =2}^{n-1} ((3 -2 x_i) x_i - x_{i-1} - 2 x_{i +1} +1)^2  \\
+((3-2 x_n) x_n - x_{n-1} + 1)^2 + \left(\sum\limits_{i = 1}^n x_i\right)^2.
\end{multline*}
We added the last term, so that the structure-exploiting methods in~\cite{waki2006sums, zheng2019sparse} are not suitable here. Upon replacing the non-negativity constraint in~\eqref{Eq:ex1} with an
SOS or $\alpha$-SDSOS condition, this problem can be reformulated as an SDP or block factor-width cone program, respectively. We vary $n$ and obtain SDPs in the standard primal form of different size.

\begin{table}
\caption{Computational results for~\eqref{Eq:ex1} using SOS and $\alpha$-SDSOS relaxations, where $n$ denotes the dimension of $x \in \mathbb{R}^n$, and $\infty$ means MOSEK ran out of memory. 
}\label{tab:pop_time}
	\centering
	\begin{tabular}{rrrrrr}
	\toprule
		\multirow{2}{10pt}{$n$}   & \multirow{2}{30pt}{Full SDP} & \multicolumn{4}{c}{Number of blocks $p$ in partition $\alpha$} \\
		&  & $4$  &  $10$      &  $20$    & $50$        \\
		\midrule
		\multicolumn{6}{l}{{Computational Time (seconds)}} \\
		$10$  &  $2.38$  & $1.43$  & $1.29$  & $1.28$  & $1.49$      \\
		$15$ &  $27.3$ & $23.3$ & $15.6$ & $10.1$ & $5.36$ \\
		$20$ &  $489$ & $252$   &  $98.1$ & $66.8$ & $28.1$ \\
		$25$                    &  $\infty$                    & $1970$            &   $783$ & $571$ & $132$\\
		$30$                    &  $\infty$                    & $\infty$         &  $5680$ & $3710$ & $840$\\
		\multicolumn{6}{l}{{Objective values $\gamma$}} \\
		$10$                      &   $-0.9$                    & $-0.45$ &  $134$ &  $483$ & $2120$ \\
		$15$                      &   $-0.92$                   & $-0.75$ & $80.1$ &  $459$ & $2240$ \\
		$20$                      &   $-0.87$                   & $-0.87$ &$-0.11$ &  $251$ & $1910$ \\
		$25$                      &   $\infty$                  & $-1.07$ &$-0.21$ &  $231$ & $1360$ \\
		$30$                      &   $\infty$                  &$\infty$ &$-0.37$ &  $177$ & $1770$ \\
	\bottomrule
	\end{tabular}
\end{table}

In our simulation, the partition $\alpha$ was chosen as follows: we first fix the number of blocks $p$ and let $k = \lfloor N /p\rfloor$, where $\lfloor \cdot \rfloor$ is the floor operation, and $N$ is the size of the PSD cone. Then the partition is composed of $N - k p$ blocks of size $k+1$ followed by $(k + 1) p - N$ blocks of size $k$. The number of SDP constraints in our block factor-width cone program is $p(p-1)/2$, as shown in~\eqref{eq:SDPfw_s1}. The computational times and the corresponding objective values are listed in Table~\ref{tab:pop_time}. It is noticeable that with a finer partition we obtain faster solutions, but they are conservative in terms of the objective value. When guaranteeing feasibility (or solution quality) is important, a coarser partition may be more appropriate as it gives  improved inner approximations of the PSD cone. Instead, when scalability is of primal concern, a finer partition may be a good choice. We note that for large-scale instances $n \geq 25$, MOSEK ran out of memory on our machine. On the other hand, our strategy of using block factor-width-two matrices can still provide a useful upper bound for~\eqref{Eq:ex1}.

\subsection{Matrix SOS problems} \label{ss:maxtrix_sos}
For this numerical example, we show that there exists a natural partition $\alpha$ in the case of the matrix SOS programs, as discussed in Section~\ref{section:sosmatrix}. In particular, we consider the following problem:
\begin{equation}
\begin{aligned}
    \min_{\gamma} \quad & \gamma \\
\text{subject to }\quad & P(x) + \gamma I \succeq 0 ,~~\forall x\in\mathbb{R}^3,
\end{aligned}\label{prog:sos-matrix}
\end{equation}
where $P(x)$ is an $r\times r$ polynomial matrix with each element being a quartic polynomial in three variables, and the coefficient of each element is randomly generated. As suggested in Proposition~\ref{prop:SOSmatrices}, the natural partition $\alpha$ for this case is
$$
\alpha = \{\underbrace{10, 10, \ldots, 10}_{r}\}
$$
since we have ${3+2 \choose 2}=10$. In our simulation, we vary the dimension of $P(x)$ from 25 to 50. The computational results are depicted in Table~\ref{tab:mat_sos}. Our approximation of	$\alpha$-\SDSOS~offers faster computational solutions with almost the same optimal objective $\gamma$ (which is not distinguishable within four significant figures) compared to the standard SOS technique, while the standard \SDSOS~technique~\cite{ahmadi2019dsos} provides even faster solutions, but their quality is worse.

\begin{table}
\caption{Computational results for the instance~\eqref{prog:sos-matrix} using \SOS, $\alpha$-\SDSOS~ and \SDSOS~relaxations. } \label{tab:mat_sos}
	\centering
	\begin{tabular}{lrrrrrr}
	\toprule
		$r$  &    $25$ &  $30$ &  $35$ &  $40$ &  $45$  &  $50$ \\
\midrule
\multicolumn{7}{l}{Computational time} \\
		\SOS               &  $14.4$& $35.9$& $87.2$& $175.0$& $316.0$& $487.8$ \\
	$\alpha$-\SDSOS  & $10.8$& $16.6$ & $25.3$ & $36.0$ & $57.4$ & $71.4$  \\
	\SDSOS	    &  $1.1$& $1.3$ & $1.6$ & $2.1$ & $2.6$ & $3.3$  \\
		\multicolumn{7}{l}{Objective value $\gamma$} \\
		\SOS               & $266.5$& $316.2$& $460.8$& $562.0$& $746.9$& $919.8$ \\
		$\alpha$-\SDSOS &  $266.5$& $316.2$& $460.8$& $562.0$& $746.9$& $919.8$ \\
		\SDSOS       & $270.3$& $324.8$& $477.7$& $570.9$& $762.2$& $961.7$ \\
				\bottomrule
	\end{tabular}
\end{table}

\subsection{Region of attraction (ROA) estimation}
As our final numerical experiment, we consider a control application: estimating the region of attraction (ROA) for a nonlinear dynamical system $$
    \dot{x} = f(x), \quad x(0) = x_0,
$$
where $x(t) \in \mathbb{R}^n$ is the state vector and $f: \mathbb{R}^n \rightarrow \mathbb{R}^n$ is a polynomial vector in $x$. We assume that the origin $x = 0$ is a locally asymptotically stable equilibrium. The region of attraction is defined as
$$
\mathcal{R} := \left\{x_0 \in \mathbb{R}^n \mid  \text{if } x(0) = x_0,\; \text{then}\; \lim_{t\rightarrow \infty} x(t) = 0\right\}.
$$
It is in general very difficult to compute the exact ROA, and significant research has been devoted to estimate an invariant subset of the ROA~\cite{valmorbida2017region,topcu2008local,tan2004searching,mauroy2016global,henrion2013convex,chesi2011domain}. One key technique is based on computing an invariant sublevel set of a Lyapunov function $V(x)$~\cite{vidyasagar2002nonlinear}, \emph{i.e.,} upon defining $\Omega_\gamma := \{x\in \mathbb{R}^n \mid V(x)\leq \gamma\}$, if we have the following set containment relation
\begin{equation}\label{eq:setcontain}
    \Omega_\gamma \subset \{x \in \mathbb{R}^n \mid \nabla V(x)f(x) < 0\} \cup \{0\},
\end{equation}
then an inner approximation $\Omega_\gamma \subset \mathcal{R}$ is obtained. If a Lyapunov function $V(x)$ is given in~\eqref{eq:setcontain}, the set containment constraint can be certified using SOS optimization~\cite{topcu2008local}. For asymptotically stable equilibrium points, a linearization can be used to compute a Lyapunov function, and the so-called V-s iterations can be used to compute better Lyapunov functions; see, \emph{e.g.},~\cite{topcu2008local,tan2004searching}
for details.

\begin{figure}[t]
	\centering
	\subfigure[Full SDP]{
		\includegraphics[scale=.43]{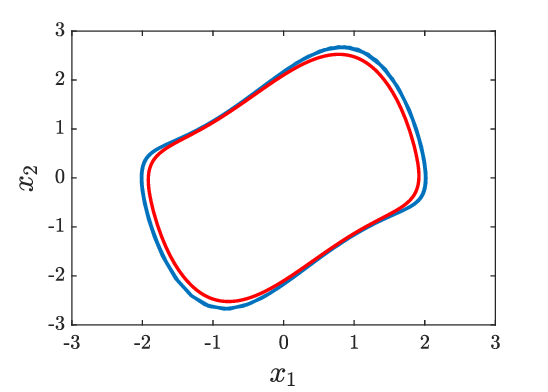}
	}
	\subfigure[Number of blocks: $p=3$]{
		\includegraphics[scale=.43]{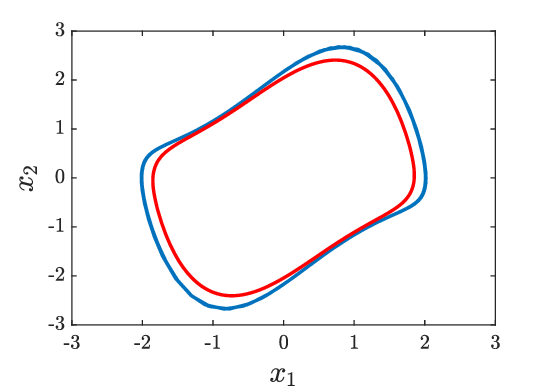}
	}\\
	\subfigure[Number of blocks: $p=5$]{
		\includegraphics[scale=.43]{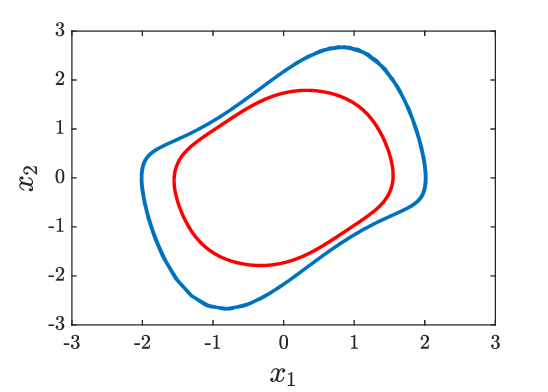}
	}
	\subfigure[SDD approximation]{
		\includegraphics[scale=.43]{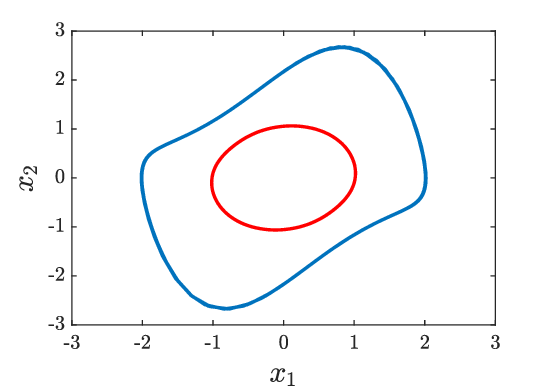}
	}
	\caption{ROA estimation for~\eqref{eq:vdp} (the estimated ROA boundary is highlighted in red) via combining V-s iterations~\cite{topcu2008local,tan2004searching} and block factor-width-two decomposition, where the number of V-s iterations was set to 20: (a) the full SDP was solved at each V-s iteration; (b) the number of block partitions was set to $p=3$; (c) the number of block partitions was set to $p=5$; (d) the SDD approximation was used at each V-s iteration.}
	\label{FIG:5}
\end{figure}

Here, we first consider the classical Van der Pol dynamics in reverse time~\cite{vidyasagar2002nonlinear},
\begin{equation} \label{eq:vdp}
    \begin{aligned}
        \dot{x}_1 &= -x_2, \\
        \dot{x}_2 &=x_1 +(x_1^2-1)x_2,
    \end{aligned}
\end{equation}
which has an unstable limit cycle and an asymptotically stable equilibrium at the origin. We used the V-s iterations\footnote{For the numerical simulation of this subsection, we adapted the code of SOS optimization from~\url{http://users.cms.caltech.edu/~utopcu/saveMaterial/LangleyWorkshop.html}.} to estimate the ROA for~\eqref{eq:vdp}. The results are shown in Fig~\ref{FIG:5}. As expected, it is clear that solving the full SDP returns the best estimate of the ROA, and that increasing the number of block partitions $p$ leads to a more conservative estimation (the SDD approximation gives the most conservative estimation). Since this instance is of relatively small scale, all SDP/block factor-width-two cone program/SOCP instances were solved within one second on our computer.

To demonstrate the scalability issue, we consider the set containment problem~\eqref{eq:setcontain}, which is a subproblem in the ROA estimation. In particular, we check whether the unit ball is contained by the region of $g(x)\leq 0$, \emph{i.e.},
\begin{equation} \label{eq:setcontaint_ex}
    \{x^\tr x < 1\} \subset \{g(x) \leq 0\}
\end{equation}
using SOS optimization. In our simulation, we generated the instance $g(x)$ by setting $g(x) = h^2(x) - 10^3$, where the coefficients of $h(x)$ were randomly generated. The degree of $h(x)$ was set to three, and we varied the dimensions of $x$ in our simulation. The partition $\alpha$ is chosen using the same procedure in Section~\ref{section:pop}. In this case, all the partitions $\alpha$ returned an SOS certificate for the set containment condition~\eqref{eq:setcontaint_ex}. The computational time is listed in Table~\ref{tab:set_time}. As expected, a finer partition $\alpha$ required less time consumption for getting a solution. For the largest instance ($n=13$) we tested, solving the full SDP required more than half an hour to return a solution, while it only took around 6 minutes when using a partition $\alpha$ with $p=50$ blocks, offering $5 \times$ speedup.

\begin{table}
\caption{Computational time (seconds) for~\eqref{eq:setcontaint_ex} using SOS and $\alpha$-SDSOS relaxations, where $n$ denotes the dimension of $x$ and the degree of $g(x)$ is 6 in the simulation. 
}\label{tab:set_time}
	\centering
	{	\begin{tabular}{rrrrrr}
	\toprule
		\multirow{2}{10pt}{$n$}   & \multirow{2}{30pt}{Full SDP} & \multicolumn{4}{c}{Number of blocks $p$ in partition $\alpha$} \\
		&  & $4$  &  $10$      &  $20$    & $50$        \\
		\midrule
		$8$  &  $8.85$  & $8.36$  & $7.82$  & $5.03$  & $4.27$      \\
		$10$ &  $96.5$ & $81.7$ & $61.8$ & $50.9$ & $28.5$ \\
		$12$ &  $802$ & $522$   &  $402$ & $297$ & $231$ \\
		$13$                    &  $1536$                    & $1047$            &   $848$ & $717$ & $378$\\
			\bottomrule
	\end{tabular}
	}
\end{table}

\section{Conclusion} \label{section:Conclusion}

In this paper, we have introduced a new class of block factor-width-two matrices $\mathcal{FW}^n_{\alpha,2}$, and presented a new hierarchy of inner/outer approximations of the cone of PSD matrices.  Our notion of $\mathcal{FW}^n_{\alpha,2}$ is strictly less conservative than the standard $\mathcal{FW}^n_{2}$, and also more scalable than $\mathcal{FW}^n_{k} (k \geq 3)$. Explicit bounds on the approximation quality using block factor-width-two matrices for the PSD cone are provided. Furthermore, we have applied this new class of matrices in both semidefinite and SOS optimization, leading to a new block factor-width cone program and a block extension of \SDSOS~polynomials. As demonstrated in our numerical experiments, via varying the matrix partition, we can balance the trade-off between computational speed and solution quality for large-scale instances. For some applications, the partition comes naturally from the problem formulation, \emph{e.g.}, matrix SOS programs.

Our future work will apply block factor-width-two matrices in relevant control applications that involve SDPs, \emph{e.g.}, network systems where a natural partition may exist according to subsystem dimensions. It would be interesting to further test the performance of our inner and outer approximations for large-scale SDPs in some benchmark problems; e.g.~\cite{Mittelmann,de2009new}. Also, unlike the PSD cone, the cone of $\mathcal{FW}^n_{\alpha,2}$ and its dual $(\mathcal{FW}^n_{\alpha,2})^*$ are not self-dual. It will be interesting to investigate non-symmetric interior-point methods~\cite{skajaa2015homogeneous,nesterov2012towards} to exploit the inherent decomposition structures of $\mathcal{FW}^n_{\alpha,2}$ or $(\mathcal{FW}^n_{\alpha,2})^*$. This may lead to efficient algorithms for solving the block factor-width-two cone program without reformulating it into standard SDPs where additional variables are required. Finally, it is interesting to exploit any underlying sparsity in SDPs before applying the factor-width approximation, e.g., combining chordal decomposition with factor-width approximation, which is promising to improve both numerical efficiency and approximation quality.

{

\appendices

\section*{Appendix}
\addcontentsline{toc}{section}{Appendix}
\renewcommand{\thesubsection}{\Alph{subsection}}

This appendix completes the proof of Corollary~\ref{prop:fwsdd}, Proposition~\ref{prop:upperbound} and Proposition~\ref{prop:lowerbound} in the main text.

\subsection{Proof of Corollary~\ref{prop:fwsdd}}

\label{app:B}

For completeness, we first recall some definitions for the Perron Frobenius theorem; see~\cite{berman1994nonnegative} for more details.

\begin{definition}[Non-negative matrices]
    A matrix is called nonnegative if all its entries are non-negative.
\end{definition}

\begin{definition}[Irreducible  matrices]
    A matrix $A$ is called reducible if it can be conjugated into block upper triangular form by a permutation matrix $P$, \emph{i.e.}
    $$
        PAP^{-1} = \begin{bmatrix}
            B & C\\ 0 & D
        \end{bmatrix},
    $$
    where $B$ and $D$ are non-zero square matrices. A matrix is irreducible if it is not reducible.
\end{definition}

The celebrated Perron Frobenius theorem is as follows.
\begin{lemma}
    Given an irreducible nonnegative matrix $A \in \mathbb{R}^{n \times n}$, we have
    \begin{itemize}
        \item Its spectral radius $\rho(A)$ is a simple eigenvalue of $A$.
        \item The corresponding left and right eigen-vectors can be chosen to be positive element-wise.
        {\item The only element-wise positive eigen-vectors are those corresponding to $\rho(A)$.}
    \end{itemize}
\end{lemma}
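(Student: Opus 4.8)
The plan is to prove the three assertions of the lemma together through the classical Perron--Frobenius argument, whose engine is the fact that a nonnegative matrix $A$ is irreducible if and only if $(I+A)^{n-1}$ is \emph{entrywise strictly positive}. First I would record this equivalence by interpreting $\bigl((I+A)^{n-1}\bigr)_{ij}$ as counting directed walks of length at most $n-1$ from $i$ to $j$ in the graph on $\{1,\ldots,n\}$ whose edges are the nonzero entries of $A$; irreducibility is precisely strong connectivity of this graph, so every such entry is positive. This strict positivity is the regularization that drives all subsequent steps.

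For the existence of a positive eigenvector I would use the Collatz--Wielandt functional $r(x) = \min_{i:\,x_i>0} (Ax)_i/x_i$, defined for $x\ge 0$, $x\neq 0$, which is the largest scalar $\lambda$ with $Ax \ge \lambda x$. Since $r$ fails to be continuous on the probability simplex $\Delta$, the key move is to maximize it instead over the compact set $\{(I+A)^{n-1}x : x\in\Delta\}$, which lies in the open positive orthant where $r$ is continuous; because $Ax\ge\lambda x$ implies $A\bigl((I+A)^{n-1}x\bigr)\ge\lambda(I+A)^{n-1}x$, this substitution leaves the supremum unchanged. Let $z>0$ attain the maximum $r^\star=r(z)$. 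If $Az\neq r^\star z$, then $Az-r^\star z\ge 0$ is nonzero, and applying $(I+A)^{n-1}>0$ gives $Aw>r^\star w$ with $w=(I+A)^{n-1}z>0$, so $r(w)>r^\star$, a contradiction; hence $Az=r^\star z$ with $z>0$. Comparing with an arbitrary eigenpair $(\mu,u)$ through $A|u|\ge|\mu|\,|u|$ shows $|\mu|\le r^\star$, so $r^\star=\rho(A)$, which settles the right eigenvector in the second claim; running the same argument on $A^\tr$ (also irreducible) produces a positive left eigenvector $v>0$.

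Finally I would treat simplicity, splitting it into a geometric and an algebraic part. For geometric simplicity I take any real eigenvector $u$ for $\rho(A)$ and choose a scalar $c$ so that $z-cu\ge 0$ has a vanishing coordinate; were $z-cu$ nonzero, then $(I+A)^{n-1}(z-cu)$ would be strictly positive, yet it equals $(1+\rho)^{n-1}(z-cu)$, which retains that zero coordinate, a contradiction, forcing $u$ to be a multiple of $z$. The algebraic part is the step I expect to be the main obstacle, since the variational argument controls only genuine eigenvectors and says nothing about generalized ones; here I would bring in the positive left eigenvector $v$, noting that a generalized eigenvector $w$ with $(A-\rho I)w=z$ would force $0=v^\tr(A-\rho I)w=v^\tr z$, which is impossible because $v,z>0$ give $v^\tr z>0$. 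This excludes a nontrivial Jordan block and establishes that $\rho(A)$ is a simple eigenvalue. The third claim then follows immediately: if $Au=\mu u$ with $u>0$, pairing with $v$ gives $\mu\,v^\tr u=v^\tr A u=\rho\,v^\tr u$ with $v^\tr u>0$, so $\mu=\rho(A)$.
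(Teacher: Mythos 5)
Your proof is correct, but there is nothing in the paper to compare it against: the paper states this lemma as ``the celebrated Perron Frobenius theorem'' and simply cites Berman and Plemmons~\cite{berman1994nonnegative} without proof, since the result is classical and only its statement is used (in the proof of Lemma~\ref{lem:sdd}). What you have written is the standard textbook proof: the combinatorial equivalence of irreducibility with $(I+A)^{n-1}>0$, the Collatz--Wielandt functional maximized over the image of the simplex under $(I+A)^{n-1}$ to dodge the discontinuity of $r$ on the boundary, the strict-positivity bootstrap showing the maximizer is a genuine positive eigenvector for $\rho(A)$, and the pairing with the positive left eigenvector of $A^\tr$ to kill Jordan blocks and to pin down $\rho(A)$ as the only eigenvalue with a positive eigenvector. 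All steps are sound; in particular you correctly identified that the variational argument alone gives only geometric simplicity and that algebraic simplicity needs the separate $v^\tr z>0$ argument. Two small points you should make explicit in a final write-up: (i) in the geometric-simplicity step, a complex eigenvector for $\rho(A)$ should be reduced to the real case by splitting into real and imaginary parts (each is a real eigenvector, hence a multiple of $z$, so the complex eigenspace is still one-dimensional); and (ii) the existence of a generalized eigenvector $w$ with $(A-\rho I)w=z$ (rather than with right-hand side some other eigenvector) uses the already-established fact that the eigenspace is spanned by $z$, together with a real-part argument if $w$ is a priori complex. Neither gap is substantive; both are routine bookkeeping in the classical proof.
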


We will also need the following lemma 
\begin{lemma}\label{lem:sdd}
Consider a matrix $M = [m_{ij}] \in \mathbb{R}^{n \times n}$ with
\begin{equation}\label{eq:M}
        m_{ij} = \begin{cases}
             -\sum_{j=1, j \neq i}^n z_{ij}, & i = j,\\
            z_{ij},  & i \neq j.
        \end{cases}
\end{equation}
with $z_{i j} \ge 0$. If $M$ has a symmetric non-zero pattern, then there exist positive scalars $d_i > 0$ such that
    \begin{equation} \label{eq:SDD_s0}
        |m_{ii}|d_i^2 \geq \sum_{j=1, j \neq i}^n \left(m_{ji}d_j^2\right), \quad \forall i = 1, \ldots, n.
    \end{equation}
\end{lemma}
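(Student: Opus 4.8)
The plan is to reinterpret the target inequality~\eqref{eq:SDD_s0} as the existence of a strictly positive vector in the left null space of $M$. Writing $e_i = d_i^2 > 0$ and collecting $e = (e_1, \ldots, e_n)^\tr$, and using $m_{ii} = -|m_{ii}| = -\sum_{j\neq i} z_{ij}$ together with $m_{ji} = z_{ji}$ for $j \neq i$, one checks that $(M^\tr e)_i = -|m_{ii}|\,e_i + \sum_{j\neq i} m_{ji}\,e_j$, so that~\eqref{eq:SDD_s0} holds for a given $e>0$ if and only if $(M^\tr e)_i \le 0$ for all $i$. Hence it suffices to produce a strictly positive $e$ with $M^\tr e = 0$, i.e.\ a positive left eigenvector of $M$ for the eigenvalue $0$; this would in fact yield~\eqref{eq:SDD_s0} with equality.

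First I would record that $M$ has zero row sums, since $\sum_j m_{ij} = m_{ii} + \sum_{j\neq i} z_{ij} = 0$, so $\mathbf{1} = (1,\ldots,1)^\tr$ is a right eigenvector of $M$ for the eigenvalue $0$. To bring the Perron--Frobenius theorem into play I would pass to the nonnegative matrix $\tilde M = M + cI$ with $c = \max_i |m_{ii}|$. By construction $\tilde M$ is entrywise nonnegative (its off-diagonal entries are the $z_{ij}\ge 0$ and its diagonal entries are $c - |m_{ii}| \ge 0$) and has constant row sum $c$, so $\tilde M \mathbf{1} = c\,\mathbf{1}$ exhibits $\mathbf{1}$ as a strictly positive right eigenvector. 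Invoking the third statement of the Perron--Frobenius lemma above, the only element-wise positive eigenvectors are those for the spectral radius, so necessarily $c = \rho(\tilde M)$. The left Perron eigenvector $e>0$ then satisfies $e^\tr \tilde M = c\,e^\tr$, and subtracting $c\,e^\tr$ gives $e^\tr M = 0$, that is $M^\tr e = 0$, which is exactly what is needed.

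The remaining gap, which I expect to be the main obstacle, is that the Perron--Frobenius statement as quoted requires $\tilde M$ to be \emph{irreducible}, while $M$ is only assumed to have a symmetric nonzero pattern. I would close this by interpreting the symmetric pattern as an undirected graph on $\{1,\ldots,n\}$, whose connected components correspond precisely to the irreducible diagonal blocks of $\tilde M$; it is exactly the symmetry of the pattern that makes connectivity coincide with strong connectivity and prevents nonzero entries from linking distinct blocks. Because every edge stays inside its component, the principal submatrix of $M$ indexed by a single component still has zero row sums and is again of the form~\eqref{eq:M}, so the eigenvector argument above applies verbatim block by block and returns a strictly positive $e$ on each component. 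Concatenating these gives a strictly positive $e$ on all of $\{1,\ldots,n\}$ with $M^\tr e = 0$, after which setting $d_i = \sqrt{e_i} > 0$ reads off~\eqref{eq:SDD_s0} (with equality), completing the argument.
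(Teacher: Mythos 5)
Your proposal is correct and follows essentially the same route as the paper's own proof: shift $M$ by $\max_i|m_{ii}|$ times the identity to obtain a nonnegative matrix with $\mathbf{1}$ as a positive right eigenvector, invoke Perron--Frobenius (including the fact that positive eigenvectors belong only to the spectral radius) to extract a strictly positive left eigenvector $e$ with $e^\tr M = 0$, handle reducibility by noting that the symmetric pattern forces a block-diagonal (rather than merely block-triangular) decomposition into irreducible blocks, and set $d_i = \sqrt{e_i}$. The only cosmetic difference is that you apply the shift within each connected component separately while the paper uses one global shift $\xi$ for all blocks; both yield~\eqref{eq:SDD_s0} with equality.
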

\begin{proof}
\emph{Step 1:} The matrix $M$ is either irreducible or $M$ can be conjugated into a block-diagonal matrix
$$
    PMP^{-1} = \begin{bmatrix} M_1 & & \\ & \ddots & \\ & & M_t
    \end{bmatrix},
$$
where $P$ is a permutation matrix, and each block entry $M_i$ is irreducible, $i = 1, \ldots, t$.

\emph{Step 2:} We define a new matrix
$
    \hat{M} = M + \xi I_n,
$
where {$\xi= \max_{i=1,\dots,n} |m_{i i}|$, which implies that $\hat{M}$ has only nonnegative entries. Furthermore, since $M \textbf{1} = 0$, where $\textbf{1}$ is a vector of ones, we have that $\hat M \textbf{1} = \xi \textbf{1}$.}
According to Step 1, we have either $\hat{M}$ is irreducible, or
$$
    P\hat{M}P^{-1} = \begin{bmatrix} \hat{M}_1 & & \\ & \ddots & \\ & & \hat{M}_t
    \end{bmatrix},
$$
where $\hat{M}_i, i = 1, \ldots, t$ are irreducible non-negative matrices. {Note that $\textbf{1}$ is permutation invariant (i.e., $P\textbf{1}=P^{-1}\textbf{1}=\textbf{1}$), $\hat M_i \textbf{1} = \xi \textbf{1}$ and $\hat M_i$ is non-negative and irreducible for each $i$.
{As $\textbf{1}$ is a positive eigen-vector, }according to Perron-Frobenius theorem, $\xi$ is the spectral radius and the eigenvalue of $\hat M_i$ for each $i$. This implies that $\xi$ is also the spectral radius and an eigenvalue for the matrix $\hat M$.}

\emph{Step 3}: According to the Perron Frobenius theorem, $\xi$ is the spectral radius of $\hat{M}_i$, and the corresponding left eigen-vectors can be chosen to be positive element-wise. Thus, by stacking the positive left eigen-vectors of $\hat{M}_i$, there exists $$d = \begin{bmatrix} d_1^2 & d_2^2 & \ldots & d_n^2 \end{bmatrix}^\tr$$
with positive scalars $d_i, i = 1, \ldots, n$ such that
$
    d^\tr \hat{M} = d^\tr \xi,
$
leading to
$
    d^\tr {M} = 0,
$
which {implies}~\eqref{eq:SDD_s0}. This completes the proof.
\end{proof}

Now are ready to prove Corollary~\ref{prop:fwsdd}.

 $1 \Leftrightarrow 2$ simply follows Theorem~\ref{theo:blockfw} with $\alpha = \{1, 1, \ldots, 1\}$. We now prove $2 \Leftrightarrow 3$.

    $2 \Rightarrow 3:$   We first define a matrix $M = [m_{ij}] \in \mathbb{R}^{n \times n}$ as in~\eqref{eq:M}. Without loss of generality, we can assume $M$ has a symmetric nonzero pattern. This is because the constraint~\eqref{eq:fw2_s2} allows us to set $z_{ji} = 0$ whenever $z_{ij} = 0$.
    According to Lemma~\ref{lem:sdd}, there exist positive scalars $d_i > 0$ such that
    \begin{equation} 
        |m_{ii}|d_i^2 \geq \sum_{j=1, j \neq i}^n \left(m_{ji}d_j^2\right), \quad \forall i = 1, \ldots, n.
    \end{equation}

    Note that
    \begin{equation} \label{eq:squareineqaulity}
        x + y \geq 2 \sqrt{xy}, \forall x\geq 0, y \geq 0.
    \end{equation}
    Thus, we have that
        \begin{align}
                \sum_{j = 1, j \neq i}^n  \left(\frac{d_j}{d_i}|a_{ij}|\right) &\leq \sum_{j = 1, j \neq i}^n  \sqrt{m_{ji}\frac{d^2_j}{d^2_i}m_{ij}}  \nonumber\\
                & \leq \frac{1}{2} \sum_{j = 1, j \neq i}^n  \left(m_{ji}\frac{d^2_j}{d^2_i} + m_{ij}\right) \nonumber\\
                & = \frac{1}{2} \sum_{j = 1, j \neq i}^n m_{ij} + \frac{1}{2d_i^2}\sum_{j = 1, j \neq i}^n m_{ji}d_j^2 \nonumber\\
                & \leq \frac{1}{2} |m_{ii}| + \frac{1}{2d_i^2}|m_{ii}|d_i^2 \nonumber\\
                & \leq a_{ii}. \label{eq:SDD_s1}
        \end{align}
    In~\eqref{eq:SDD_s1}, the first inequality comes from~\eqref{eq:fw2_s2}, the second inequality is the fact~\eqref{eq:squareineqaulity}, the second to last inequality is from~\eqref{eq:SDD_s0}, and the last inequality comes from~\eqref{eq:fw2_s1}.
    Thus, $DAD$ is diagonally dominant with $D = \text{diag}(d_1, d_2, \ldots, d_n),$ \emph{i.e.}, $A \in \SDD_n$.

    $3 \Rightarrow 2:$ Suppose $A \in \SDD_n$. By definition, there exist positive $d_i$, such that
    $
        d_ia_{ii} \geq  \sum_{j = 1, j \neq i}^n |a_{ij}|d_j, i = 1, \ldots, n.
    $
    Now we choose
    $
        z_{ij} = \frac{d_j}{d_i}|a_{ij}| \geq 0, \forall i, j =1, \ldots, n, i \neq j,
    $
    which naturally satisfy the conditions in~\eqref{eq:fw2_s1}-\eqref{eq:fw2_s3}.
}

\subsection{Proof of Proposition~\ref{prop:upperbound}}
\label{app:C}

To facilitate our analysis, we define the distance between $(\mathcal{FW}_{\alpha,2}^n)^*$ and $\mathcal{FW}_{\alpha,2}^n$
 as the largest distance between a unit-norm matrix $M$ in $(\mathcal{FW}_{\alpha,2}^n)^*$  and the cone $\mathcal{FW}_{\alpha,2}^n$:
    $$
    \begin{aligned}
        &\text{dist}((\mathcal{FW}_{\alpha,2}^n)^*,\mathcal{FW}_{\alpha,2}^n) \\
        := &\sup_{M \in (\mathcal{FW}_{\alpha,2}^n)^*, \|M\|_F = 1} \, \text{dist}(M,\mathcal{FW}_{\alpha,2}^n\textbf{}).
    \end{aligned}
    $$
By definition, it is easy to see that
\begin{equation} \label{eq:norminequality}
\begin{aligned}
    \text{dist}((\mathcal{FW}_{\alpha,2}^n)^*,\mathcal{FW}_{\alpha,2}^n) \;&\geq\; \text{dist}((\mathcal{FW}_{\alpha,2}^n)^*,\mathbb{S}^n_+),  \\
    \text{dist}((\mathcal{FW}_{\alpha,2}^n)^*,\mathcal{FW}_{\alpha,2}^n) \;&\geq\; \text{dist}(\mathbb{S}^n_+,\mathcal{FW}_{\alpha,2}^n).
\end{aligned}
\end{equation}

The following strategy is motivated by~\cite{blekherman2020sparse}. Consider a matrix $M \in (\mathcal{FW}_{\alpha,2}^n)^*$ with $\|M\|_F = 1$. We construct a new $\hat{M} \in \mathcal{FW}_{\alpha,2}^n$. By definition, the principal block submatrices of $M$ are
$
    M^{ij} := E^{\alpha}_{ij} M (E^{\alpha}_{ij})^\tr \succeq 0, \quad 1 \leq i <j \leq p.
$
Then, we construct a new matrix as
$$
    \hat{M}:=  \sum_{1 \leq i <j \leq p} (E^{\alpha}_{ij})^\tr M^{ij} E^{\alpha}_{ij},
$$
for which we have $\hat{M} \in \mathcal{FW}_{\alpha,2}^n$. Moreover, notice that the block entries of $\hat{M}$ are just scaling of the block entries of $M$.
\begin{enumerate}
    \item \textit{Diagonal blocks:} these are scaled by the factor $p-1$, \emph{i.e.}
    $
        \hat{M}_{ii} = (p-1)M_{ii}, \quad i = 1, \ldots, p,
    $ due to the construction of the basis matrices $E^\alpha_{ij}$.
    \item \textit{Off-diagonal blocks:} the off-diagonal blocks are the same, \emph{i.e.},
    $
        \hat{M}_{ij} = M_{ij}, \quad  i \neq j.
    $

    \end{enumerate}

To even out these factors, we consider $\frac{2}{p}\hat{M}$, leading to
$$
    \begin{aligned}
        (M - \frac{2}{p}\hat{M})_{ii} &= \frac{2-p}{p}M_{ii},  \quad  i = 1, \ldots,p \\
        (M - \frac{2}{p}\hat{M})_{ij} &= \frac{p-2}{p}M_{ij}, \quad i \neq j.
    \end{aligned}
$$
Therefore, we have
$$
    \text{dist}(M,\mathcal{FW}_{\alpha,2}^n) \leq \left\|M - \frac{2}{p}\hat{M}\right\|_F = \frac{p-2}{p}\|M\|_F = \frac{p-2}{p}.
$$
Since this holds for all unit-norm matrices in $(\mathcal{FW}_{\alpha,2}^n)^*$, this upper bound holds for  $ \text{dist}((\mathcal{FW}_{\alpha,2}^n)^*,\mathcal{FW}_{\alpha,2}^n)$. Considering the inequality~\eqref{eq:norminequality},  we complete the proof of Proposition~\ref{prop:upperbound}.

\subsection{Proof of Proposition~\ref{prop:lowerbound}}
\label{app:D}

The main idea in our proof follows~\cite[Theorem 3]{blekherman2020sparse}, which constructs a special family of matrices $G(a,b,n)$ that admits an easier computation for the distance between $G(a,b,n)$ and $\mathcal{S}^n_+$. This distance can be used as a lower bound in our purpose. We mainly adapt this idea to the case of block factor-width-two matrices.
Similar to~\cite{blekherman2020sparse}, we consider a family of matrices
\begin{equation} \label{eq:Gmatrix}
    G(a,b,n) := (a+b)I_n - a \textbf{1}\textbf{1}^\tr,
\end{equation}
where $I_n$ is the $n \times n$ identity matrix, and $\textbf{1}$ is the column vector with all entries being 1. We have the following fact:
\begin{lemma}[\!\cite{blekherman2020sparse}] \label{lemma:Gmatrix}
Consider the family of matrices~\eqref{eq:Gmatrix}.
\begin{itemize}
    \item The eigenvalues of $G(a,b,n)$ are $b - (n-1)a$ with multiplicity 1, and $b + a$ with multiplicity $n-1$.
 \item If $a \geq 0, b \geq 0$, then
 $
    \text{dist}(G(a,b,n),\mathbb{S}^n_+) = \max\{(n-1)a -b, 0\}.
 $
\end{itemize}
\end{lemma}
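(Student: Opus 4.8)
The plan is to prove the two claims in turn, both resting on the spectral structure of the rank-one term $a\mathbf{1}\mathbf{1}^\tr$. First I would note that $\mathbf{1}\mathbf{1}^\tr$ has eigenvalue $n$ with eigenvector $\mathbf{1}$ and eigenvalue $0$ with multiplicity $n-1$ on the orthogonal complement $\mathbf{1}^\perp$. Since $G(a,b,n) = (a+b)I_n - a\mathbf{1}\mathbf{1}^\tr$ is an affine shift of a scalar multiple of $\mathbf{1}\mathbf{1}^\tr$, it shares these eigenvectors. Applying $G$ to $\mathbf{1}$ gives $(a+b) - an = b-(n-1)a$, while applying $G$ to any $v \in \mathbf{1}^\perp$ gives $(a+b)v$. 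This establishes the eigenvalue claim, with multiplicities $1$ and $n-1$ respectively.

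For the distance claim, I would invoke the classical characterization that the Frobenius projection of a symmetric matrix onto $\mathbb{S}^n_+$ is obtained by truncating its negative eigenvalues to zero, so that
$$
\text{dist}(M, \mathbb{S}^n_+) = \sqrt{\sum_{i\,:\,\lambda_i(M) < 0} \lambda_i(M)^2}.
$$
This follows from the orthogonal invariance of the Frobenius norm, which lets one diagonalize $M$ and solve the nearest-PSD problem coordinatewise in the eigenbasis. Specializing to $G(a,b,n)$ with $a,b \geq 0$, the $n-1$ eigenvalues equal to $a+b$ are nonnegative, so the only eigenvalue that can be negative is the simple one $b-(n-1)a$. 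When $(n-1)a > b$ this contributes exactly $\big((n-1)a - b\big)^2$ under the square root and nothing else does; when $(n-1)a \le b$ all eigenvalues are nonnegative, $G$ is already PSD, and the distance is zero. Combining the two cases yields $\text{dist}(G(a,b,n),\mathbb{S}^n_+) = \max\{(n-1)a - b,\, 0\}$.

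The argument is essentially a direct spectral computation, so I do not expect a serious obstacle. The only point deserving care is the justification of the eigenvalue-truncation formula for the PSD projection, which I would cite as the standard nearest-PSD-matrix result rather than reprove from scratch. Since the statement is taken from \cite{blekherman2020sparse}, the proof should be short and self-contained, and its role here is simply to supply the explicit spectral data needed for the lower-bound construction in Proposition~\ref{prop:lowerbound}.
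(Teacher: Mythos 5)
Your proof is correct: the spectral computation for $G(a,b,n)=(a+b)I_n-a\mathbf{1}\mathbf{1}^\tr$ (eigenvalue $b-(n-1)a$ on $\mathbf{1}$, eigenvalue $a+b$ on $\mathbf{1}^\perp$) and the eigenvalue-truncation characterization of the Frobenius projection onto $\mathbb{S}^n_+$ together give exactly the stated distance. Note that the paper itself offers no proof of this lemma---it is imported verbatim from the cited reference \cite{blekherman2020sparse}---and your argument is the standard one, so there is no divergence to report.
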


Given a homogeneous partition $\alpha$, the block size is
$
    \alpha_{\max} = \frac{2n}{p}.
$
By Lemma~\ref{lemma:Gmatrix}, it is not difficult to see that if $a\geq 0, b \geq 0$, we have
$
    G(a,b,n) \in (\mathcal{FW}_{\alpha,2}^n)^*  \; \Leftrightarrow \;  b \geq (\alpha_{\max} - 1)a,
$
since every principle block matrix in $G(a,b,n)$ is PSD.
Let
$$
    \begin{aligned}
        \hat{a} &= \frac{1}{\sqrt{(\alpha_{\max}-1)^2n +n(n-1)}} \\
        \hat{b}& = (\alpha_{\max} - 1)\hat{a},
    \end{aligned}
$$
then it is easy to verify that $\|G(\hat{a},\hat{b},n)\|_F = 1$ and $G(\hat{a},\hat{b},n) \in (\mathcal{FW}_{\alpha,2}^n)^*$. Therefore
$$
\begin{aligned}
    \text{dist}((\mathcal{FW}_{\alpha,2}^n)^*,\mathbb{S}^n_+) &\geq  \text{dist}(G(\hat{a},\hat{b},n) ,\mathbb{S}^n_+) \\
    &= (n-1)\hat{a} - (\alpha_{\max}-1)\hat{a} \\
    &= (n-\alpha_{\max})\hat{a}.
\end{aligned}
$$
Then we have
$$
    \begin{aligned}
        \text{dist}((\mathcal{FW}_{\alpha,2}^n)^*,\mathbb{S}^n_+) &\geq (n-\alpha_{\max})\hat{a} = \left(n-\frac{2n}{p}\right)\hat{a} \\
       &= \frac{1}{\sqrt{\frac{4n}{p^2} - \frac{4}{p}+1}}\frac{p-2}{p}.
    \end{aligned}
$$
This completes the proof.

\balance

\bibliographystyle{IEEEtran}        
\bibliography{references}

%



\end{document}